\title{On various (strong) rainbow connection numbers of graphs\footnote{Supported by NSFC No.11371205 and 11531011}}
\author{Lin Chen$^1$, Xueliang Li$^{1,3}$, Henry Liu$^2$, Jinfeng Liu$^1$\\
\\
\normalsize $^1$Center for Combinatorics and LPMC\\
\normalsize Nankai University, Tianjin 300071, China\\
\normalsize chenlin1120120012@126.com, lxl@nankai.edu.cn, ljinfeng709@163.com\\
\\
\normalsize $^2$School of Mathematics and Statistics\\
\normalsize Central South University, Changsha 410083, China\\
\normalsize henry-liu@csu.edu.cn\\
\\
\normalsize $^3$School of Mathematics and Statistics\\
\normalsize Qinghai Normal University, Xining, Qinghai 810008, China
}
\date{18 April 2017}
\newdimen\unit\newdimen\psep\newcount\nd\newcount\ndx\newbox\dotb\newbox\ptbox
\newdimen\dx\newdimen\dy\newdimen\dxx\newdimen\dyy\newdimen\hgt
\newdimen\xoff\newdimen\yoff
\newcommand\clap[1]{\hbox to 0pt{\hss{#1}\hss}}
\newcommand\vdisk[1]{{\font\dotf=cmr10 scaled #1\dotf.}}
\newcommand\varline[2]{\setbox\dotb\hbox{\vdisk{#1}}\xoff=-.5\wd\dotb
\wd\dotb=0pt\yoff=-.5\ht\dotb\psep=#2\ht\dotb}
\newcommand\varpt[1]{\setbox\ptbox\clap{\vdisk{#1}}\setbox\ptbox
\hbox{\raise-.5\ht\ptbox\box\ptbox}}
\newcommand\cpt{\copy\ptbox}
\newcommand\point[3]{\rlap{\kern#1\unit\raise#2\unit\hbox{#3}}}
\newcommand\setnd[4]{\dx=#3\unit\advance\dx-#1\unit\divide\dx by\psep
\dy=#4\unit\advance\dy-#2\unit\divide\dy by\psep \multiply\dx
by\dx\multiply\dy by\dy\advance\dx\dy\nd=1\advance\dx-1sp
\loop\ifnum\dx>0\advance\dx-\nd sp\advance\nd1\advance\dx-\nd
sp\repeat}
\newcommand\dl[4]{{\setnd{#1}{#2}{#3}{#4}\dline{#1}{#2}{#3}{#4}\nd}}
\newcommand\dline[5]{{\nd=#5\hgt=#2\unit\dx=#3\unit\advance\dx-#1\unit
\divide\dx by\nd\dy=#4\unit\advance\dy-#2\unit\divide\dy by\nd
\advance\hgt\yoff\rlap{\kern#1\unit\kern\xoff\loop\ifnum\nd>1\advance\nd-1
\advance\hgt\dy\kern\dx\raise\hgt\copy\dotb\repeat}}}
\newcommand\qellip[4]{{\setnd{0}{0}{#3}{#4}\dx=\unit\dy=0pt\raise\yoff\rlap{%
\kern#1\unit\kern\xoff\raise#2\unit\hbox{\loop\ifnum\dx>0\rlap{\kern#3\dx
\raise#4\dy\copy\dotb}\hgt=\dx\divide\hgt
by\nd\advance\dy\hgt\hgt=\dy \divide\hgt
by\nd\advance\dx-\hgt\repeat\rlap{\raise#4\dy\copy\dotb}}}}}
\newcommand\bez[6]{{\setnd{#1}{#2}{#3}{#4}\ndx=\nd\setnd{#3}{#4}{#5}{#6}
\ifnum\ndx>\nd\nd=\ndx\fi\dx=#3\unit\advance\dx-#1\unit\dy=#4\unit
\advance\dy-#2\unit\dxx=#5\unit\advance\dxx-#1\unit\dyy=#6\unit\advance
\dyy-#2\unit\advance\dxx-2\dx\advance\dyy-2\dy\divide\dxx
by\nd\divide\dyy
by\nd\advance\dx.25\dxx\advance\dy.25\dyy\divide\dx
by\nd\divide\dy by\nd \multiply\nd
by2\dx=100\dx\dy=100\dy\dxx=100\dxx\dyy=100\dyy\divide\dxx by\nd
\divide\dyy
by\nd\hgt=#2\unit\raise\yoff\rlap{\kern#1\unit\kern\xoff
\raise\hgt\copy\dotb\loop\ifnum\nd>0\advance\nd-1\advance\hgt0.01\dy
\kern0.01\dx\raise\hgt\copy\dotb\advance\dx\dxx\advance\dy\dyy\repeat}}}
\newcommand\ptu[3]{\point{#1}{#2}{\cpt\raise1ex\clap{$\scriptstyle{#3}$}}}
\newcommand\ptd[3]{\point{#1}{#2}{\cpt\raise-1.8ex\clap{$\scriptstyle{#3}$}}}
\newcommand\ptr[3]{\point{#1}{#2}{\cpt\raise-.4ex\rlap{$\ \scriptstyle{#3}$}}}
\newcommand\ptl[3]{\point{#1}{#2}{\cpt\raise-.4ex\llap{$\scriptstyle{#3}\ $}}}
\newcommand\ptlu[3]{\point{#1}{#2}{\raise.8ex\clap{$\scriptstyle{#3}$}}}
\newcommand\ptld[3]{\point{#1}{#2}{\raise-1.6ex\clap{$\scriptstyle{#3}$}}}
\newcommand\ptlr[3]{\point{#1}{#2}{\raise-.4ex\rlap{$\,\scriptstyle{#3}$}}}
\newcommand\ptll[3]{\point{#1}{#2}{\raise-.4ex\llap{$\scriptstyle{#3}\,$}}}
\newcommand\pt[2]{\point{#1}{#2}{\cpt}}
\newcommand\thnline{\varline{400}{.6}}
\newcommand\dotline{\varline{1000}{4}}
\newtheorem{theorem}                   {Theorem}%[section]
\newtheorem{thm}             [theorem] {Theorem}
\newtheorem{lem}             [theorem] {Lemma}
\newtheorem{prop}             [theorem] {Proposition}
\newtheorem{prob}            [theorem] {Problem}
\newtheorem*{clm1'}{Claim 1$'$}
\newtheorem*{clm2'}{Claim 2$'$}
\newtheorem*{clm3'}{Claim 3$'$}
\newtheorem*{clm4'}{Claim 4$'$}
\def\diam{\textup{diam}}
\begin{document}
\maketitle

\begin{abstract}
%An edge-coloured path is \emph{rainbow} if all the edges have distinct colours. For a connected graph $G$, an edge-colouring of $G$ is \emph{rainbow connected} if any two vertices of $G$ are connected by a rainbow path, and \emph{strongly rainbow connected} if any two vertices $u$ and $v$ of $G$ are connected by a rainbow $u-v$ geodesic (i.e., a shortest $u-v$ path). The \emph{rainbow connection number} $rc(G)$ is the minimum number of colours in a rainbow connected colouring of $G$, and the \emph{strong rainbow connection number} of $src(G)$ is the minimum number of colours in a strongly rainbow connected colouring of $G$.
An edge-coloured path is \emph{rainbow} if all the edges have distinct colours. For a connected graph $G$, the \emph{rainbow connection number} $rc(G)$ is the minimum number of colours in an edge-colouring of $G$ such that, any two vertices are connected by a rainbow path. Similarly, the \emph{strong rainbow connection number} $src(G)$ is the minimum number of colours in an edge-colouring of $G$ such that, any two vertices are connected by a rainbow geodesic (i.e., a path of shortest length). These two concepts of connectivity in graphs were introduced by Chartrand et al.~in 2008. Subsequently, vertex-coloured versions of both parameters, $rvc(G)$ and $srvc(G)$, and a total-coloured version of the rainbow connection number, $trc(G)$, were introduced. In this paper we introduce the strong total rainbow connection number $strc(G)$, which is the version of the strong rainbow connection number using total-colourings. Among our results, we will determine the strong total rainbow connection numbers of some special graphs. We will also compare the six parameters, by considering how close and how far apart they can be from one another. In particular, we will characterise all pairs of positive integers $a$ and $b$ such that, there exists a graph $G$ with $trc(G)=a$ and $strc(G)=b$, and similarly for the functions $rvc$ and $srvc$.
%A total-coloured path is \emph{total rainbow} if all the edges and internal vertices have distinct colours. For a connected graph $G$, a total-colouring of $G$ is \emph{total rainbow connected} if any two vertices of $G$ are connected by a total rainbow path. The {\it total rainbow connection number} of $G$, denoted by $trc(G)$, is the minimum number of colours in a total rainbow connected colouring of $G$. In this paper we introduce the concept of strong total rainbow connection of graphs. A total-colouring of $G$ is \emph{strongly total rainbow connected} if any two vertices of $G$ are connected by a total rainbow geodesic, i.e., a path of length equal to the distance between the two vertices. The \emph{strong total rainbow connection number} of $G$, denoted by $strc(G)$, is the minimum number of colours in a strongly total rainbow connected colouring of $G$. Among our results we state some simple observations about $strc(G)$ for a connected graph $G$. We also investigate the strong total rainbow connection numbers of some special graphs. Finally, for any pair of positive integers $a$ and $b$ with $b\geq a$, ***except for four cases***, we construct a connected graph $G$ such that $trc(G)=a$ while $strc(G)=b$.
\\[2mm]
\textbf{Keywords:} (Strong) rainbow connection; (strong) rainbow vertex-connection; (strong) total rainbow connection\\
[2mm] \textbf{AMS Subject Classification (2010):} 05C12, 05C15, 05C40
\end{abstract}

\section{Introduction}
In this paper, all graphs under consideration are finite and simple. For notation and terminology not defined here, we refer to \cite{B98}.

In 2008, Chartrand et al.~\cite{CJMZ} introduced the concept of rainbow connection of graphs. An edge-coloured path is \emph{rainbow} if all of its edges have distinct colours. Let $G$ be a non-trivial connected graph. An edge-colouring of $G$ is \emph{rainbow connected} if any two vertices of $G$ are connected by a rainbow path. The minimum number of colours in a rainbow connected edge-colouring of $G$ is the \emph{rainbow connection number} of $G$, denoted by $rc(G)$. The topic of rainbow connection is fairly interesting and numerous relevant papers have been published. In addition, the concept of strong rainbow connection was introduced by the same authors. For two vertices $u$ and $v$ of $G$, a \emph{$u-v$ geodesic} is a $u-v$ path of length $d(u,v)$, where $d(u,v)$ is the distance between $u$ and $v$. An edge-colouring of $G$ is \emph{strongly rainbow connected} if for any two vertices $u$ and $v$ of $G$, there is a rainbow $u-v$ geodesic. The minimum number of colours in a strongly rainbow connected edge-colouring of $G$ is the \emph{strong rainbow connection number} of $G$, denoted by $src(G)$. The investigation of $src(G)$ is slightly more challenging than that of $rc(G)$, and fewer papers have been obtained on it. For details, see \cite{CJMZ,CL,LLL,LS2}.

As a natural counterpart of rainbow connection, Krivelevich and Yuster \cite{KY} proposed the concept of rainbow vertex-connection. A vertex-coloured path is \emph{vertex-rainbow} if all of its internal vertices have distinct colours. A vertex-colouring of $G$ is \emph{rainbow vertex-connected} if any two vertices of $G$ are connected by a vertex-rainbow path. The minimum number of colours in a rainbow vertex-connected vertex-colouring of $G$ is the \emph{rainbow vertex-connection number} of $G$, denoted by $rvc(G)$. Corresponding to the strong rainbow connection, Li et al.~\cite{LMS} introduced the notion of strong rainbow vertex-connection. A vertex-colouring of $G$ is \emph{strongly rainbow
vertex-connected} if for any two vertices $u$ and $v$ of $G$, there is a vertex-rainbow $u-v$ geodesic. The minimum number of colours in a strongly rainbow vertex-connected vertex-colouring of $G$ is the \emph{strong rainbow vertex-connection number} of $G$, denoted by $srvc(G)$. For more results on rainbow vertex-connection, we refer to \cite{LS,LiuMSrvc}.

It was also shown that computing the rainbow connection number and rainbow vertex-connection number of an arbitrary graph is NP-hard \cite{CLRTY, CFMY,CLL, CLS, HLS,LLS2015}. For more results on the rainbow connection subject, we refer to the survey \cite{LSS} and the book \cite{LS1}.

Subsequently, Liu et al.~\cite{LiuMS} proposed the concept of total rainbow connection. A total-coloured path is \emph{total-rainbow} if its edges and internal vertices have distinct colours. A total-colouring of $G$ is \emph{total rainbow connected} if any two vertices of $G$ are connected by a total-rainbow path. The minimum number of colours in a total rainbow connected total-colouring of $G$ is the \emph{total rainbow connection number} of $G$, denoted by $trc(G)$. For more results on the total rainbow connection number, see \cite{JLZ,Ma}. Inspired by the concept of strong rainbow (vertex-)connection, a natural idea is to introduce the \emph{strong total rainbow connection number}. A total-colouring of $G$ is \emph{strongly total rainbow connected} if for any two vertices $u$ and $v$ of $G$, there is a total-rainbow $u-v$ geodesic. The minimum number of colours in a strongly total rainbow connected total-colouring of $G$ is the \emph{strong total rainbow connection number} of $G$, denoted by $strc(G)$.

Very recently, Dorbec et al.~\cite{PIE2014} initiated the study of rainbow connection in digraphs. Subsequently, versions of the other five parameters for digraphs were considered. For more details, see \cite{JJ2015,JJJ2015,AJ2016,HMS2014,LLLS,LLMS}.

This paper will be organised as follows. In Section \ref{remarkssect}, we will present results for all six rainbow connection parameters for general graphs. In Section \ref{specificsect}, we determine the strong total rainbow connection number of some specific graphs, including cycles, wheels and complete bipartite and multipartite graphs. Finally in Section \ref{comparesect}, we will compare the six parameters, by considering how close and how far apart they can be from one another. In particular, we will characterise all pairs of integers $a$ and $b$ such that, there exists a connected graph $G$ with $rvc(G)=a$ and $srvc(G)=b$, and similarly for the functions $trc$ and $strc$.

We mention a few more words on terminology and notation. For a graph $G$, its vertex and edge sets are denoted by $V(G)$ and $E(G)$, and its diameter is denoted by $\diam(G)$. Let $K_n$ and $C_n$ denote the complete graph and cycle of order $n$ (where $n\ge 3$ for $C_n$), and $K_{m,n}$ denote the complete bipartite graph with class sizes $m$ and $n$. For two graphs $G$ and $H$, and a vertex $u\in V(G)$, we define $G_{u\to H}$ to be the graph obtained by replacing $u$ with $H$, and replacing the edges of $G$ at $u$ with all edges between $H$ and the neighbours of $u$ in $G$. We say that $G_{u\to H}$ is \emph{obtained from $G$ by expanding $u$ into $H$}. Note that the graph obtained from $G$ by expanding every vertex into $H$ is also known as the \emph{lexicographic product $G\circ H$}.

\section{Remarks and results for general graphs}\label{remarkssect}

In this section, we present some results about the six rainbow connection parameters $rc(G)$, $src(G),rvc(G),srvc(G),trc(G)$ and $strc(G)$, for general graphs $G$. Let $G$ be a non-trivial connected graph with $m$ edges and $n$ vertices, where $q$ vertices are non-pendent (i.e., with degree at least $2$). We have the following inequalities.
\begin{eqnarray}
&\diam(G) \le rc(G)\le src(G)\le m,\label{ineq1}\\[0.5ex]
&\diam(G)-1 \le rvc(G)\le srvc(G)\le \min(n-2,q),\label{ineq2}\\[0.5ex]
&2\,\diam(G)-1 \le trc(G)\le strc(G)\le srvc(G)+m\le\min(m+n-2,m+q),\label{ineq3}
\end{eqnarray}
To see the last inequality of (\ref{ineq2}), the inequality $srvc(G)\le n-2$ is a result of Li et al.~\cite{LMS}. We also have $srvc(G)\le q$, since any vertex-colouring of $G$ where all $q$ non-pendent vertices are given distinct colours, is strongly rainbow vertex-connected. To see the third inequality of (\ref{ineq3}), we may take a strongly rainbow vertex-connected colouring of $G$ with $srvc(G)$ colours, and then colour the edges with $m$ further distinct colours. This gives a strongly total rainbow connected colouring of $G$ with $srvc(G)+m$ colours. The last inequality of (\ref{ineq3}) then follows from (\ref{ineq2}). All remaining inequalities are trivial.

Also, the following upper bound is obvious.
\[
strc(G)\leq src(G)+q.
\]
Indeed, a strongly total rainbow connected colouring of $G$ can be obtained from a strongly rainbow colouring with $src(G)$ colours, and then colouring the non-pendent vertices of $G$ with $q$ further distinct colours. Similarly, for graphs with diameter $2$, we have the following proposition which will be very helpful for later.

\begin{prop}\label{diam=2}
Let $G$ be a graph with diameter $2$. Then $strc(G)\leq src(G)+1$.
\end{prop}

\begin{proof}
By definition, we may give $G$ a strongly rainbow connected colouring, using $src(G)$ colours. Since $\diam(G)=2$, any two non-adjacent vertices $x,y\in V(G)$ are connected by a rainbow $x-y$ geodesic of length $2$. Now, colour all vertices of $G$ with a new colour. Then clearly, the resulting total-colouring uses $src(G)+1$ colours, and is a strongly total rainbow connected colouring. Thus, $strc(G)\leq src(G)+1$.
\end{proof}

For the functions $rc(G)$ and $trc(G)$, we have the following upper bounds which are better than those of (\ref{ineq1}) and (\ref{ineq3}).
\[
rc(G)\le n-1,\quad\textup{and}\quad trc(G)\le\min(2n-3,n-1+q).
\]
Indeed, we may take a spanning tree $T$ of $G$, which has $n-1$ edges and at most $\min(n-2,q)$ non-pendent vertices. We can assign distinct colours to all edges of $T$, and to all edges and non-pendent vertices of $T$, to obtain, respectively, the above two upper bounds.

As for alternative lower bounds instead of those involving the diameter, we note that for any total rainbow connected colouring of $G$, the colours of the bridges and cut-vertices must be pairwise distinct. Similar observations hold for rainbow connected and rainbow vertex-connected colourings, where respectively, the colours of the bridges, and the colours of the cut-vertices, must be pairwise distinct. Hence, the following result holds.

\begin{prop}\label{bridges}
Let $G$ be a connected graph. Suppose that $B$ is the set of all bridges, and $C$ is the set of all cut-vertices. Denote $b=|B|$ and $c=|C|$, respectively. Then
\begin{eqnarray}
& src(G)\ge rc(G)\ge b,\nonumber\\[0.5ex]
& srvc(G)\ge rvc(G)\ge c,\nonumber\\[0.5ex]
& strc(G)\ge trc(G)\ge b+c.\nonumber
\end{eqnarray}
\end{prop}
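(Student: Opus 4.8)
The plan is to establish the three lower bounds by showing that, in each respective type of connected colouring, the relevant structural elements (bridges and/or cut-vertices) are forced to receive pairwise distinct colours. The overarching idea is that a bridge or cut-vertex cannot be avoided on \emph{any} path between two suitably chosen vertices, so every path between those vertices must use that element; combined with the rainbow condition, this forces distinctness of colours. Since the strong parameters dominate the non-strong ones by (\ref{ineq1}), (\ref{ineq2}) and (\ref{ineq3}), it suffices to prove the three inequalities $rc(G)\ge b$, $rvc(G)\ge c$, and $trc(G)\ge b+c$.

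First I would handle bridges for $rc(G)\ge b$. Fix a rainbow connected edge-colouring. For a bridge $e=xy$, removing $e$ disconnects $G$ into two components, one containing $x$ and one containing $y$; hence every $x$--$y$ path, and in particular any rainbow $x$--$y$ path, must traverse $e$. Now take any two distinct bridges $e_1,e_2$. I would argue that there exist two vertices $u,v$ such that every $u$--$v$ path uses both $e_1$ and $e_2$: since bridges are exactly the edges whose removal increases the number of components, and the block--cut-tree structure linearly orders bridges along paths, one can pick $u$ and $v$ in the two "extreme" components so that both bridges lie on the unique path of bridges separating them. Any rainbow $u$--$v$ path then contains both $e_1$ and $e_2$, forcing $e_1$ and $e_2$ to have distinct colours. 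As $e_1,e_2$ were arbitrary, all $b$ bridges receive distinct colours, giving $rc(G)\ge b$. The same separation argument, applied to cut-vertices rather than edges, yields $rvc(G)\ge c$: a cut-vertex lies on every path between vertices in two different components of its removal, so any two cut-vertices are both forced onto a common $u$--$v$ path, and the vertex-rainbow condition forces their colours to differ.

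For the third inequality $trc(G)\ge b+c$, I would combine the two phenomena within a single total-rainbow colouring. The key point is that on a total-rainbow path, the edge-colours and internal-vertex-colours together are all distinct, so the palette used on edges and the palette used on internal vertices may be treated as one common set of forbidden repetitions. I would again select $u$ and $v$ so that every $u$--$v$ path is forced to pass through all $b$ bridges and all $c$ cut-vertices simultaneously; the block--cut-tree again guarantees such a choice, since the bridges and cut-vertices separating the chosen extreme components all lie on the common backbone of every $u$--$v$ path. Then any total-rainbow $u$--$v$ geodesic (or indeed any total-rainbow $u$--$v$ path for the non-strong bound) contains all $b$ bridge-edges and all $c$ cut-vertices as internal vertices, and total-rainbowness forces these $b+c$ elements to carry pairwise distinct colours, whence $trc(G)\ge b+c$.

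The main obstacle I anticipate is the combinatorial bookkeeping needed to justify that two vertices $u,v$ can be chosen so that \emph{all} the bridges and cut-vertices simultaneously lie on every $u$--$v$ path. A single pair of bridges or cut-vertices is easy to separate, but forcing all of them at once requires care: in general the bridges and cut-vertices of $G$ are arranged in the block--cut-tree, and not every pair lies on a common path. The correct statement is that the \emph{colours} must be pairwise distinct, which only needs that each \emph{pair} be simultaneously forced onto some common $u$--$v$ path; so rather than seeking one universal pair $u,v$, I would reduce to the pairwise claim and argue that for any two bridges, or two cut-vertices, or a bridge and a cut-vertex, a suitable separating pair $u,v$ exists. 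Verifying this pairwise separation — particularly the mixed bridge/cut-vertex case for $trc(G)\ge b+c$, where one must ensure a bridge-edge and a cut-vertex both appear on a common total-rainbow path, noting that the two endpoints of a bridge are themselves cut-vertices unless they are leaves — is where the argument needs to be handled most carefully.
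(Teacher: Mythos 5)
Your plan, in its corrected pairwise form, is correct and is exactly the argument the paper intends: the paper proves Proposition \ref{bridges} only by the observation that in the respective colourings the bridges, the cut-vertices, and (for $trc$) the bridges and cut-vertices together must receive pairwise distinct colours, and your pairwise separation argument is precisely the justification of that observation. You were also right to abandon your initial idea of a single universal pair $u,v$ forcing \emph{all} bridges and cut-vertices onto every $u$--$v$ path (that claim is already false for the star $K_{1,3}$); only the pairwise version is needed, and it goes through, including the mixed bridge/cut-vertex case, since a cut-vertex that is an endpoint of a bridge necessarily has a neighbour on its own side of that bridge.
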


In the next result, we give equivalences and implications when the rainbow connection parameters are small.

\begin{thm}\label{thm1}
Let $G$ be a non-trivial connected graph.
\begin{enumerate}
\item[(a)] The following are equivalent.
\begin{enumerate}
\item[(i)] $G$ is a complete graph.
\item[(ii)] $\diam(G)=1$.
\item[(iii)] $rc(G)=1$.
\item[(iv)] $src(G)=1$.
\item[(v)] $rvc(G)=0$.
\item[(vi)] $srvc(G)=0$.
\item[(vii)] $trc(G)=1$.
\item[(viii)] $strc(G)=1$.
\end{enumerate}
\item[(b)] $strc(G)\ge trc(G)\ge 3$ if and only if $G$ is not a complete graph.
\item[(c)]
\begin{enumerate}
\item[(i)] $rc(G) = 2$ if and only if $src(G) = 2$.
\item[(ii)] $rvc(G) = 1$, if and only if $srvc(G) = 1$, if and only if $\diam(G) = 2$.
\item[(iii)] $rvc(G)=2$ if and only if $srvc(G)=2$.%, and each of these conditions implies $\diam(G)=3$.
\item[(iv)] $trc(G) = 3$ if and only if $strc(G) = 3$.
\item[(v)] $trc(G) = 4$ if and only if $strc(G) = 4$.
\end{enumerate}
Moreover, any of the conditions in (i) implies any of the conditions in (iv), and any of the conditions in (i), (iv) and (v) implies any of the conditions in (ii).
\end{enumerate}
\end{thm}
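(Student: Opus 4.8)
The plan is to exploit one simple but decisive length bound in three guises. In any edge-colouring with $k$ colours a rainbow path has length at most $k$; in any vertex-colouring with $k$ colours a vertex-rainbow path has at most $k$ internal vertices, hence length at most $k+1$; and in any total-colouring with $k$ colours a total-rainbow path of length $\ell$ carries $2\ell-1$ distinct colours on its edges and internal vertices, forcing $\ell\le\lfloor(k+1)/2\rfloor$. Every implication below is powered by one of these three bounds.

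For part (a) I would first note that (i)$\Leftrightarrow$(ii) is the standard characterisation of complete graphs. Then (ii) implies each minimum-value statement by an explicit colouring: when $\diam(G)=1$ every geodesic is a single edge, so colouring all edges (and, for the total versions, all vertices) with one colour suffices, while for the vertex versions no vertex need be coloured. Conversely, each minimum-value statement forces (i) via the length bound: for example, $rc(G)=1$ makes every rainbow path have length at most $1$, so all pairs are adjacent and $G$ is complete, and the arguments for $src,rvc,srvc,trc,strc$ run identically. For part (b), the inequality $strc(G)\ge trc(G)$ is (\ref{ineq3}) and $trc(G)=1\Leftrightarrow G$ complete is part (a); it remains only to rule out $trc(G)=2$, which the total bound with $k=2$ does at once (it forces path length at most $1$, hence completeness and $trc(G)=1$, a contradiction).

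Part (c) is the crux, and all five biconditionals follow the same two-step template. The easy direction (larger parameter equals $v$ $\Rightarrow$ smaller equals $v$) is pure monotonicity: the inequalities of Section \ref{remarkssect} give the smaller parameter at most $v$, while (a), (b) and the earlier items of (c) exclude every value below $v$. The hard direction (smaller parameter equals $v$ $\Rightarrow$ larger equals $v$) is where the length bound does real work: a colouring realising the small parameter bounds all relevant path lengths, so for any pair of vertices at the \emph{maximum} admissible distance the guaranteed rainbow path must have length exactly equal to their distance, i.e.\ it is automatically a geodesic; pairs at smaller distance cause no trouble, since short geodesics need few colours and are vertex/total-rainbow for free. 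This upgrades the weak colouring to a strong one and yields the larger parameter at most $v$.

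The main obstacle is item (iii). There the vertex length bound with $k=2$ gives only $\diam(G)\le 3$, so before the geodesic argument applies I must first rule out $\diam(G)=2$ (which by (ii) would force $srvc(G)=1<2$, contradicting $rvc(G)=2$) in order to pin down $\diam(G)=3$; then the distance-$3$ pairs are handled by the forced-geodesic argument and the distance-$\le 2$ pairs trivially. Items (iv) and (v) are cleaner, as the total bound with $k\in\{3,4\}$ forces $\diam(G)=2$ outright, after which the template closes immediately. Finally, the ``Moreover'' implications are read off from these same proofs: the hard directions of (i), (iv) and (v) all establish $\diam(G)=2$ en route, which is precisely condition (ii), while (i)$\Rightarrow$(iv) follows by combining $\diam(G)=2$ and $src(G)=2$ with Proposition \ref{diam=2} to obtain $strc(G)\le src(G)+1=3$.
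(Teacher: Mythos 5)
Your proposal is correct and takes essentially the same route as the paper: your three length bounds are exactly the content of the diameter inequalities (\ref{ineq1})--(\ref{ineq3}), and your ``forced-geodesic'' upgrade of a weak colouring into a strong one is precisely how the paper handles the hard directions of (c), including the distance-$3$ case of (iii). The only (harmless) deviations are that you first pin down $\diam(G)=3$ in (iii), which the paper's uniform treatment of all distances shows is unnecessary, and that you invoke Proposition \ref{diam=2} for (i)$\Rightarrow$(iv) where the paper directly recolours all vertices with a third colour --- the same idea in different packaging.
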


\begin{proof}
Although parts of this result can be found in \cite{CJMZ,LMS}, we provide a proof for the sake of completeness.\\[1ex]
\indent(a) Clearly we have (ii) $\Rightarrow$ (i) $\Rightarrow$ (iv). Using (\ref{ineq1}), we can easily obtain (iv) $\Rightarrow$ (iii) $\Rightarrow$ (ii). Similarly, using (\ref{ineq2}) and (\ref{ineq3}), we have (ii) $\Rightarrow$ (i) $\Rightarrow$ (vi) $\Rightarrow$ (v) $\Rightarrow$ (ii), and (ii) $\Rightarrow$ (i) $\Rightarrow$ (viii) $\Rightarrow$ (vii) $\Rightarrow$ (ii).\\[1ex]
\indent(b) If $G$ is not a complete graph, then $\diam(G)\ge 2$, and $strc(G)\ge trc(G)\ge 3$ follows from (\ref{ineq3}). The converse clearly holds by (a).\\[1ex]
\indent(c) We first prove (i). Suppose first that $src(G)=2$. Then by (a), we have $\diam(G)\ge 2$. By (\ref{ineq1}), we have $2\le rc(G)\le src(G) = 2$, and hence $rc(G) = 2$. Conversely, suppose that $rc(G) = 2$. Then (a) and (\ref{ineq1}) imply that $src(G)\ge 2$ and $\diam(G)=2$. Also, there exists a rainbow connected colouring for $G$, using $rc(G) = 2$ colours. In such an edge-colouring, for any $x,y \in V(G)$, either $xy \in E(G)$, or $xy \not\in E(G)$ and there is a rainbow $x-y$ path of length $2$, which is also a rainbow $x - y$ geodesic. Thus $src(G)\le 2$, and $src(G) = 2$ as required.

By similar arguments using (a), (\ref{ineq2}) and (\ref{ineq3}) we can prove (iv); that the first two conditions of (ii) are equivalent; and that the first condition of (v) implies the second.

Now we complete the proof of (ii). If $rvc(G)=1$, then we can easily use (a) and (\ref{ineq2}) to obtain $\diam(G)=2$. If $\diam(G)=2$, then (\ref{ineq2}) gives $rvc(G)\ge 1$. Clearly, the vertex-colouring of $G$ where every vertex is given the same colour is rainbow vertex-connected, and thus $rvc(G)\le 1$. Therefore (ii) holds.

Next, we prove (iii). Suppose first that $srvc(G)=2$. Then $rvc(G)\le 2$ by (\ref{ineq2}). Clearly $rvc(G)\neq 0$ by (a), and $rvc(G)\neq 1$ by (c)(ii). Thus $rvc(G)=2$. Conversely, suppose that $rvc(G)=2$. Then by (\ref{ineq2}), we have $srvc(G)\ge 2$ and $\diam(G)\le 3$. We may take a rainbow vertex-connected colouring of $G$, using at most $rvc(G)=2$ colours. Let $x,y\in V(G)$. If $d(x,y)\in\{1,2\}$, then any $x-y$ geodesic is clearly vertex-rainbow. If $d(x,y)=3$, then since any $x-y$ path of length at least $4$ cannot be vertex-rainbow, there must exist a vertex-rainbow $x-y$ path of length $3$, which is also an $x-y$ geodesic. Thus, the colouring is also strongly rainbow vertex-connected. We have $srvc(G)\le 2$, so that $srvc(G)=2$, and (iii) holds.

Next, we complete the proof of (v). Suppose that $strc(G)=4$. By (a) and (b), we have $3\le trc(G)\le strc(G)=4$. By (iv), we have $trc(G)\neq 3$, so that $trc(G)=4$. Thus (v) holds.

Finally, we prove the last part of (c). Firstly, suppose that either condition in (i) holds, so that $rc(G) = 2$. Then (a) and (b) imply $trc(G)\ge 3$. Moreover, there exists a rainbow connected edge-colouring for $G$, using $rc(G) = 2$ colours. Clearly by colouring all vertices of $G$ with a third colour, we have a total rainbow connected colouring for $G$, using $3$ colours. Thus, $trc(G)\le 3$. We have $trc(G) = 3$, and thus both conditions of (iv) hold. Secondly, suppose that any of the conditions in (i), (iv) or (v) holds. It is easy to use (a), and (\ref{ineq1}) or (\ref{ineq3}), to obtain $\diam(G)=2$. Thus, the three conditions of (ii) also hold.
\end{proof}

\noindent\textbf{Remark.} We remark that in Theorem \ref{thm1}(c), no other implication exists between the conditions of (i) to (v). Obviously, no implication exists between the conditions of (ii) and those of (iii). Thus by the last part of (c), no implication exists between the conditions of (iii) and those of (i), (iv) and (v). Similarly, no implication exists between the conditions of (iv) and those of (v), and thus no implication exists between the conditions of (i) and those of (v), since the conditions of (i) imply those of (iv). Clearly, the example of the stars $K_{1,n}$ shows that there are infinitely many graphs where the conditions of (ii) hold, but those of (i), (iv) and (v) do not hold. Indeed, for $n\ge 2$, we have $rvc(K_{1,n})=1$, while $rc(K_{1,n})=n$ and $trc(K_{1,n})=n+1$. Now, there are infinitely many graphs $G$ such that the conditions of (iv) hold, but those of (i) do not hold. For example, let $u$ be a vertex of the cycle $C_5$, and let $G$ be a graph obtained by expanding $u$ into a clique $K$. That is, $G=(C_5)_{u\to K}$. It was remarked in \cite{LiuMS} (and also easy to show) that for any such graph $G$, we have $trc(G)=rc(G) = 3$.
%Similarly for $n\ge 4$, consider the graph $H$ of order $n$, formed by adding two pendent edges to a vertex of $K_{n-2}$. It is easy to show that $trc(H)=4$ and $rc(H)=3$. Thus, there are infinitely many graph $H$ where the conditions of (v) hold, but those of (i) do not hold.
\\[2ex]
\indent Now, it is easy to see that if $H$ is a spanning connected subgraph of a connected graph $G$, then we have
\[
rc(G)\le rc(H),\quad rvc(G)\le rvc(H),\quad\textup{and}\quad trc(G)\le trc(H).
\]
However, the following lemma shows that the same inequalities do not hold for the strong rainbow connection parameters.

\begin{lem}\label{srcG>srcHlem}
There exist connected graphs $G$ and $H$ such that, $H$ is a spanning subgraph of $G$, and $src(G) > src(H)$. Similar statements hold for the functions $srvc$ and $strc$.
\end{lem}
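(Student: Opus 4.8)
The plan is to exhibit, for each of the three strong parameters, an explicit spanning subgraph $H$ of a graph $G$ for which the strong parameter is strictly larger on $G$. The conceptual point I want to exploit is that adding edges can only shorten distances, and while this always helps the non-strong parameters (a geodesic in $H$ remains available, and shorter detours only add flexibility), for the strong parameters a shorter distance forces the colouring onto the new, shorter, and often unique geodesics, which can be far more rigid than the longer flexible geodesics they replace.

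For $src$ I would fix a large integer $t$ and build $H$ on the vertex set $\{w\}\cup Y\cup Z$, where $Z=\{z_1,\dots,z_t\}$ and $Y=\{y_1,y_1',\dots,y_t,y_t'\}$; the edges of $H$ make $Y\cup\{w\}$ into a clique and join each $z_i$ to the private pair $y_i,y_i'$. Let $G=H+\{wz_i:1\le i\le t\}$, so that $H$ is a spanning subgraph of $G$. In $G$ the only common neighbour of $z_i$ and $z_j$ (for $i\ne j$) is $w$, since $Y_i=\{y_i,y_i'\}$ and $Y_j$ are disjoint; hence $z_iwz_j$ is the unique $z_i$–$z_j$ geodesic, forcing $c(wz_i)\ne c(wz_j)$ for all $i\ne j$, and therefore $src(G)\ge t$. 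On the other hand $\diam(H)=3$ with every relevant geodesic flexible, and three colours suffice: colour the two edges at each $z_i$ with $1$ and $2$, and every clique edge with $3$. One checks that every pair has a rainbow geodesic (for $z_i,z_j$ use $z_iy_iy_j'z_j$, with colours $1,3,2$), so $src(H)\le 3<t\le src(G)$.

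The same pair $H\subseteq G$ settles $strc$. In $G$ the forced edges $wz_i$ must still receive pairwise distinct colours in any total-rainbow colouring, so $strc(G)\ge t$; and the above edge-colouring of $H$ extends to a total-colouring with five colours (give the pair $y_i,y_i'$ the vertex-colours $4$ and $5$), which one checks is strongly total rainbow connected, so $strc(H)\le 5$.

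The case of $srvc$ is the genuine obstacle, and it requires a different gadget. A length-$2$ geodesic has a single internal vertex and so is automatically vertex-rainbow; consequently the construction above collapses for $srvc$ (indeed $\diam(G)=2$ gives $srvc(G)=1$). To force many vertex-colours in $G$ one instead needs unique geodesics of length at least $3$, whose two internal vertices are compelled to differ; and to beat $srvc(H)$ these geodesics must arise in $G$ through a distance drop of exactly one (typically $4\to 3$) from flexible geodesics in $H$. The delicate point—and the step I expect to be hardest—is that the very redundancy that keeps $srvc(H)$ small tends to persist in $G$, since $G\supseteq H$ retains all of $H$'s geodesics; so the added edges must be arranged to shorten one branch of each pair strictly more than its alternatives, making the short geodesic unique in $G$ while leaving genuinely different flexible routes in $H$. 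Verifying simultaneously the lower bound on $srvc(G)$ and the strictly smaller upper bound on $srvc(H)$ for such a construction is the main work.
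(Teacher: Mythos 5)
Your constructions for $src$ and $strc$ are correct and complete. In your $G$, the sets $\{y_i,y_i'\}$ are pairwise disjoint, so $w$ is indeed the unique common neighbour of $z_i$ and $z_j$, the path $z_iwz_j$ is the unique $z_i$--$z_j$ geodesic, and any strongly (total) rainbow connected colouring must give the $t$ edges $wz_i$ pairwise distinct colours; hence $src(G)\ge t$ and $strc(G)\ge t$. Your colourings of $H$ check out (e.g.\ $z_iy_iy_j'z_j$ gets edge colours $1,3,2$ and internal vertex colours $4,5$), so $src(H)\le 3$ and $strc(H)\le 5$, and taking $t\ge 6$ settles both parts. This is a genuinely different route from the paper, which uses two fixed graphs (Figure 1(a),(c)) obtained by adding a single dotted edge, and derives a contradiction by a case analysis on the colour of that edge after noting that the pendent edges (resp.\ bridges and cut-vertices) force distinct colours. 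Your parametric family is arguably cleaner and proves more: the gaps $src(G)-src(H)$ and $strc(G)-strc(H)$ can be made arbitrarily large, not merely positive.

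However, the lemma asserts three statements and you have proved only two. The $srvc$ case is a genuine gap: you correctly diagnose why your gadget fails there (in your $G$ every pair is at distance at most $2$, so $srvc(G)=1$, while $srvc(H)\ge\diam(H)-1=2$ --- the inequality goes the wrong way), and you correctly identify what a working construction must do (create unique geodesics with at least two internal vertices via a distance drop), but you never exhibit one; you explicitly defer ``the main work.'' As written, the $srvc$ statement remains unproven. The paper fills exactly this hole with an explicit pair $H_2\subseteq G_2$ (Figure 1(b)): $H_2$ has diameter $7$ and an explicit strongly rainbow vertex-connected colouring with $6$ colours, so $srvc(H_2)=6$; in $G_2$ (obtained by adding one edge) the six cut-vertices must receive distinct colours in any strongly rainbow vertex-connected colouring, and whichever of these six colours the vertex $x$ incident to the new edge receives, some pair of vertices has no vertex-rainbow geodesic, giving $srvc(G_2)\ge 7$. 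Some concrete gadget of this kind is required to complete your proof; until you supply and verify one, the lemma is only two-thirds proved.
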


\begin{proof}
We construct graphs $G_i$ and $H_i$, for $i=1,2,3$, as follows. Let $H_1$ (resp.~$H_2,H_3$) be the graph as shown in Figure 1(a) (resp.~(b), (c)) consisting of the solid edges, and $G_1$ (resp.~$G_2,G_3$) be the graph obtained by adding the dotted edge.\\[1ex]
\[ \unit = 1cm
\thnline
\dl{-3}{1}{-5}{-1}\dl{-3}{-1}{-5}{1}\dl{-7}{1}{-5}{-1}\dl{-7}{-1}{-5}{1}\dl{-6}{0}{-5}{0.33}\dl{-6}{0}{-5}{-0.33}\dl{-4}{0}{-5}{0.33}\dl{-4}{0}{-5}{-0.33}
\pt{-3}{1}\pt{-3}{-1}\pt{-4}{0}\pt{-5}{1}\pt{-5}{0.33}\pt{-5}{-0.33}\pt{-5}{-1}\pt{-6}{0}\pt{-7}{1}\pt{-7}{-1}
\dotline
\dl{-4}{0}{-6}{0}
\ptlu{-6.4}{0.45}{1}\ptld{-6.4}{-0.5}{2}\ptlu{-5.6}{0.45}{3}\ptld{-5.6}{-0.5}{4}\ptlu{-5.32}{0.15}{4}\ptld{-5.32}{-0.2}{3}\ptlu{-4.4}{0.45}{1}\ptld{-4.4}{-0.5}{2}\ptlu{-4.68}{0.15}{1}\ptld{-4.68}{-0.2}{2}\ptlu{-3.6}{0.45}{3}\ptld{-3.6}{-0.5}{4}
\point{-5.22}{-1.8}{\small (a)}
%G2H2
\thnline
\dl{-0.5}{1}{0.5}{1}\dl{-0.5}{-1}{0.5}{-1}\dl{-2.5}{-1}{-0.5}{1}\dl{-2.5}{1}{-0.5}{-1}\dl{2.5}{-1}{0.5}{1}\dl{2.5}{1}{0.5}{-1}\dl{-1.5}{0}{-0.5}{0.33}\dl{-1.5}{0}{-0.5}{-0.33}\dl{-0.5}{0.33}{0.5}{0.33}\dl{-0.5}{-0.33}{0.5}{-0.33}\dl{0.5}{0.33}{1.5}{0}\dl{0.5}{-0.33}{1.5}{0}
\pt{-2.5}{-1}\pt{-2.5}{1}\pt{-2}{-0.5}\pt{-2}{0.5}\pt{-1.5}{0}\pt{-0.5}{1}\pt{-0.5}{-1}\pt{-0.5}{-0.33}\pt{-0.5}{0.33}\pt{0.5}{1}\pt{0.5}{-1}\pt{0.5}{-0.33}\pt{0.5}{0.33}\pt{1.5}{0}\pt{2}{0.5}\pt{2}{-0.5}\pt{2.5}{1}\pt{2.5}{-1}
\dotline
\dl{-1.5}{0}{0.5}{-1}
\ptlu{-1.9}{0.45}{1}\ptld{-1.9}{-0.5}{2}\ptlu{1.9}{0.45}{3}\ptld{1.9}{-0.5}{4}\ptlu{-1.5}{0}{5}\ptlu{1.5}{0}{6}\ptlu{-0.5}{1}{3}\ptlu{-0.5}{0.33}{4}\ptlu{-0.5}{-0.33}{3}\ptld{-0.5}{-1.04}{4}\ptlu{0.5}{1}{1}\ptlu{0.5}{0.33}{1}\ptlu{0.5}{-0.33}{2}\ptld{0.5}{-1.04}{2}
\point{0.4}{-0.85}{\small $x$}
\point{-0.25}{-1.8}{\small (b)}
%G3H3
\thnline
\dl{3}{1}{5}{-1}\dl{3}{-1}{5}{1}\dl{7}{1}{5}{-1}\dl{7}{-1}{5}{1}\dl{6}{0}{5}{0.33}\dl{6}{0}{5}{-0.33}\dl{4}{0}{5}{0.33}\dl{4}{0}{5}{-0.33}
\pt{3}{1}\pt{3}{-1}\pt{4}{0}\pt{5}{1}\pt{5}{0.33}\pt{5}{-0.33}\pt{5}{-1}\pt{6}{0}\pt{7}{1}\pt{7}{-1}\pt{3.5}{0.5}\pt{3.5}{-0.5}\pt{6.5}{0.5}\pt{6.5}{-0.5}
\dotline
\dl{4}{0}{6}{0}
\ptlu{3.6}{0.45}{1}\ptld{3.6}{-0.5}{2}\ptlu{4.4}{0.45}{9}\ptld{4.4}{-0.5}{10}\ptlu{4.68}{0.15}{10}\ptld{4.68}{-0.2}{9}\ptlu{5.6}{0.45}{7}\ptld{5.6}{-0.5}{8}\ptlu{5.32}{0.15}{7}\ptld{5.32}{-0.2}{8}\ptlu{6.4}{0.45}{3}\ptld{6.4}{-0.5}{4}\ptlu{4}{0}{5}\ptlu{6}{0}{6}\ptlu{5}{1}{1}\ptlu{5}{0.33}{1}\ptld{5}{-0.35}{2}\ptld{5}{-1.04}{2}\ptld{3.62}{0.34}{7}\ptlu{3.62}{-0.41}{8}\ptld{3.12}{0.8}{11}\ptlu{3.12}{-0.82}{12}\ptld{6.38}{0.36}{9}\ptlu{6.45}{-0.41}{10}\ptld{6.88}{0.8}{13}\ptlu{6.88}{-0.82}{14}
\point{4.8}{-1.8}{\small (c)}
\ptlu{0}{-2.8}{\textup{\small Figure 1. The graphs in Lemma \ref{srcG>srcHlem}}}
\]\\[1ex]
\indent We will prove that
\begin{equation}\label{G>Heq}
src(G_1)>src(H_1),\quad srvc(G_2)>srvc(H_2),\quad\textup{and}\quad strc(G_3)>strc(H_3).
\end{equation}

Firstly, it is easy to see that the edge-colouring of $H_1$ as shown is strongly rainbow connected, and thus $src(H_1)\le 4$. In fact, we have $src(H_1)=4$, since $src(H_1)\ge\diam(H_1)=4$. Now, suppose that there exists a strongly rainbow connected colouring of $G_1$, using at most four colours. Note that the four pendent edges of $G_1$ must received distinct colours, say colours $1,2,3,4$. The dotted edge has colour $1,2,3$ or $4$, and in each case, we can easily find two vertices that are not connected by a rainbow geodesic in $G_1$. We have a contradiction, and thus $src(G_1)\ge 5>4=src(H_1)$.

We can similarly prove the remaining two inequalities of (\ref{G>Heq}). We have a strongly rainbow vertex-connected colouring of $H_2$ as shown, and since $\diam(H_2)=7$, we have $srvc(H_2)=6$. Suppose that there exists a strongly rainbow vertex-connected colouring of $G_2$, using at most six colours. Then, the six cut-vertices of $G_2$ must received distinct colours, say colours $1,2,3,4,5,6$. The vertex $x$ has colour $1,2,3,4,5$ or $6$, and in each case, we can find two vertices that are not connected by a vertex-rainbow geodesic in $G_2$. We have a contradiction, and thus $srvc(G_2)\ge 7>6=srvc(H_2)$. Likewise, we have a strongly total rainbow connected colouring of $H_3$ as shown, and thus $strc(H_3)\le 14$. Suppose that there exists a strongly total rainbow connected colouring of $G_3$, using at most $14$ colours. Then, the eight bridges and six cut-vertices of $G_3$ must received distinct colours, say colours $1,2,\dots,14$. The dotted edge has colour $1,2,\dots,13$ or $14$, and in each case, we can find two vertices that are not connected by a total-rainbow geodesic in $G_3$. Again we have a contradiction, and thus $strc(G_3)\ge 15>14\ge strc(H_3)$.
\end{proof}

Li et al.~\cite{LMS} provided a similar example of graphs $G$ and $H$ which gave $srvc(G) = 9 > 8 = srvc(H)$. However in their example, $H$ was not a spanning subgraph of $G$, although this could be easily corrected. Chartrand et al.~\cite{CJMZ} had conjectured that $src(G) \le src(H)$ whenever $G$ and $H$ are connected graphs, with $H$ a spanning subgraph of $G$. They observed that if this conjecture was true, then we have $src(G)\le n-1$ if $G$ is a connected graph of order $n$. However, Lemma \ref{srcG>srcHlem} shows that the conjecture is false. The latter claim may still be true, and we propose this as an open problem, as well as the total-coloured analogue.

\begin{prob}
Let $G$ be a connected graph of order $n$ with $q$ non-pendent vertices. Then, are the following inequalities true?
\[
src(G)\le n-1,\quad\text{and}\quad strc(G)\le\min(2n-3,n-1+q).
\]
\end{prob}

\section{Strong total rainbow connection numbers of some graphs}\label{specificsect}

In this section, we consider the strong total rainbow connection numbers of some specific graphs, namely, trees, cycles, wheels, and complete bipartite and multipartite graphs. The remaining five rainbow connection parameters for these graphs have previously been considered by various authors, and we shall recall these previous results along the way.

First, let $T$ be a tree of order $n$, with $q$ non-pendent vertices. Note that, since any two vertices of $T$ are connected by a unique path, we have $rc(T)=src(T)$, $rvc(T)=srvc(T)$, and $trc(T)=strc(T)$. From Chartrand et al.~\cite{CJMZ}, and Liu et al.~\cite{LiuMSrvc,LiuMS}, we have $rc(T)=src(T)=n-1$, $rvc(T)=q$, and $trc(T)=n-1+q$. Moreover, it is well known that if $n\ge 3$, then $1\le q\le n-2$; and that $q=1$ if and only if $T$ is a star, and $q=n-2$ if and only if $T$ is a path. Thus, we have the following result.

\begin{prop}\label{strc(T)}
Let $T$ be a tree with order $n$, and $q$ non-pendent vertices.
\begin{enumerate}
\item[(a)] $rvc(T)=srvc(T)=q$. In particular, for $n\ge 2$, $rvc(T)=srvc(T)=n-2$ if and only if $T$ is a path; and for $n\ge 3$, $rvc(T)=srvc(T)=1$ if and only if $T$ is a star.
\item[(b)] $trc(T)=strc(T)=n-1+q$. In particular, for $n\ge 2$, $trc(T)=strc(T)=2n-3$ if and only if $T$ is a path; and for $n\ge 3$, $trc(T)=strc(T)=n$ if and only if $T$ is a star.
\end{enumerate}
\end{prop}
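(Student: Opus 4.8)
The plan is to establish Proposition \ref{strc(T)} by first reducing the total-coloured statements to the already-known results quoted just above the proposition, using the observation that in a tree every pair of vertices is joined by a \emph{unique} path, which is automatically a geodesic. This uniqueness is the crucial structural fact: a total-colouring (resp.\ vertex-colouring) of $T$ is total rainbow connected if and only if it is strongly total rainbow connected, because the only candidate path between any two vertices is the unique tree-path, so ``rainbow connected by some path'' and ``rainbow connected by a geodesic'' coincide. Hence $trc(T)=strc(T)$ and $rvc(T)=srvc(T)$, and I would simply cite the values $rvc(T)=q$ and $trc(T)=n-1+q$ from Liu et al.~\cite{LiuMSrvc,LiuMS} to obtain the main equalities in parts (a) and (b).

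Next I would handle the characterisations of the extremal cases. For part (a), I would invoke the standard facts (recalled in the paragraph preceding the proposition) that for a tree of order $n\ge 3$ one has $1\le q\le n-2$, with $q=1$ exactly when $T$ is a star and $q=n-2$ exactly when $T$ is a path. Substituting into $rvc(T)=srvc(T)=q$ immediately yields: $rvc(T)=srvc(T)=n-2$ iff $T$ is a path (for $n\ge 2$, noting the small case $n=2$ where $K_2$ is both a path and has $q=0=n-2$), and $rvc(T)=srvc(T)=1$ iff $T$ is a star (for $n\ge 3$). For part (b), I would substitute the same bounds on $q$ into $trc(T)=strc(T)=n-1+q$: the maximum value $n-1+(n-2)=2n-3$ is attained iff $q=n-2$ iff $T$ is a path, and the minimum $n-1+1=n$ is attained iff $q=1$ iff $T$ is a star.

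The only point requiring a little care, rather than a genuine obstacle, is the boundary behaviour at small $n$ in the ``in particular'' clauses, since the characterisation ranges are stated for $n\ge 2$ (path case) and $n\ge 3$ (star case). For $n=2$ the tree $K_2$ is simultaneously the path $P_2$ and the star $K_{1,1}$, and has $q=0$; I would check that the ``path'' clauses remain correct here ($n-2=0=q$ and $2n-3=1=n-1+q$) while the ``star'' clauses are correctly restricted to $n\ge 3$, where a star has $q=1$ and is distinct from a path. I expect no substantive difficulty anywhere, since the entire proof is a translation of the uniqueness of tree-paths together with the elementary inequality $1\le q\le n-2$ and its equality cases; the real content was already supplied by the cited determinations of $rvc$, $src$ and $trc$ for trees.
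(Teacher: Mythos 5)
Your proposal is correct and follows essentially the same route as the paper: the paper's proof is precisely the paragraph preceding the proposition, which uses the uniqueness of tree paths to get $rvc(T)=srvc(T)$ and $trc(T)=strc(T)$, cites Chartrand et al.\ and Liu et al.\ for the values $q$ and $n-1+q$, and invokes the standard facts $1\le q\le n-2$ with $q=1$ iff star and $q=n-2$ iff path. Your explicit check of the $n=2$ boundary case (where $K_2$ has $q=0$ and the path clauses still hold) is a small point of care the paper leaves implicit, but it does not change the argument.
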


Our next task is to consider cycles. Recall that $C_n$ denotes the cycle of order $n\ge 3$. The functions $rc(C_n)$ and $src(C_n)$ were determined by Chartrand et al.~\cite{CJMZ}, while $rvc(C_n)$, $srvc(C_n)$ and $trc(C_n)$ were determined by Li and Liu \cite{LL2011}, Lei et al.~\cite{LLLS}, and Liu et al.~\cite{LiuMS}, respectively. We may summerise these results as follows.

\begin{thm}\label{Cnthm}\textup{\cite{CJMZ,LLLS,LL2011,LiuMS}}
\indent\\[-3ex]
\begin{enumerate}
\item[(a)] $rc(C_3)=src(C_3)=1$, and $rc(C_n)=src(C_n)=\lceil\frac{n}{2}\rceil$ for $n\ge 4$.
\item[(b)] For $3\leq n\leq 15$, the values of $rvc(C_n)$ and $srvc(C_n)$ are given in the following table.
\[
\begin{array}{|c||c|c|c|c|c|c|c|c|c|c|c|c|c|c|c|c|}
\hline
n & 3 & 4 & 5 & 6 & 7 & 8 & 9 & 10 & 11 & 12 & 13 & 14 & 15\\
\hline
rvc(C_n) & 0 & 1 & 1 & 2 & 3 & 3 & 3 & 4 & 5 & 5 & 6 & 7 & 7\\
\hline
srvc(C_n) & 0 & 1 & 1 & 2 & 3 & 3 & 3 & 4 & 6 & 5 & 7 & 7 & 8\\
\hline
\end{array}
\]
For $n\geq 16$, we have $rvc(C_n)=srvc(C_n)=\lceil\frac{n}{2}\rceil$.
\item[(c)] For $3\leq n\leq 12$, the values of $trc(C_n)$ are given in the following table.
\[
\begin{array}{|c||c|c|c|c|c|c|c|c|c|c|c|c|c|}
\hline
n & 3 & 4 & 5 & 6 & 7 & 8 & 9 & 10 & 11 & 12\\
\hline
trc(C_n) & 1 & 3 & 3 & 5 & 6 & 7 & 8 & 9 & 11 & 11\\
\hline
\end{array}
\]
For $n\geq 13$, we have $trc(C_n)=n$.
\end{enumerate}
\end{thm}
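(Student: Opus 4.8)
The plan is to prove the three parts separately while exploiting a common mechanism: for a cyclic edge- (or vertex-, or total-) colouring, the requirement that some shortest path between a pair of farthest vertices be rainbow translates into the requirement that a suitable \emph{window} of consecutive edges/vertices around the cycle be rainbow, and a short periodicity argument then converts this into a lower bound via a $\gcd$ with $n$. The case $C_3=K_3$ is disposed of by Theorem~\ref{thm1}(a), so throughout I may assume $n\ge 4$, write the vertices and edges cyclically as $v_0,\dots,v_{n-1}$ and $e_0,\dots,e_{n-1}$, and recall that $\diam(C_n)=\lfloor n/2\rfloor$.

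For part (a), since $src(C_n)\ge rc(C_n)$ it suffices to prove the upper bound for $src$ and the lower bound for $rc$. For the upper bound I would colour $e_i$ and $e_{i+\lceil n/2\rceil}$ with the same colour, using $\lceil n/2\rceil$ colours in all; any geodesic occupies at most $\lfloor n/2\rfloor$ consecutive edges while two equal-coloured edges lie $\lceil n/2\rceil$ apart, so no such window repeats a colour and every pair has a rainbow geodesic. For the lower bound, when $n=2k$ the two $v_0$--$v_k$ paths both have length $k$, forcing $k=\lceil n/2\rceil$ colours at once. The only subtlety is $n=2k+1$: a farthest pair is joined by paths of lengths $k$ and $k+1$, so with merely $k$ colours the longer path can never be rainbow and hence \emph{every} window of $k$ consecutive edges must be rainbow; writing $c_i$ for the colour of $e_i$ and comparing adjacent windows forces $c_{i+k}=c_i$ for all $i$, and since $\gcd(k,2k+1)=1$ the colouring collapses to a constant, contradicting $k\ge 2$. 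Thus $rc(C_n)\ge\lceil n/2\rceil$, and part (a) follows.

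Parts (b) and (c) follow a similar strategy, but now the objects forced to be distinct are the \emph{internal vertices} of geodesics (together with the edges, in the total case), which weakens the diameter lower bound by one or two and makes the refined windowing argument do the real work. For the large-$n$ upper bounds I would give explicit periodic colourings: for (b) colour $v_i$ by $i\bmod\lceil n/2\rceil$ and check that the at most $\lfloor n/2\rfloor-1$ internal vertices of any geodesic fall in one window and are distinct (it suffices to find one rainbow geodesic, so the even antipodal case poses no extra trouble); for (c) combine an antipodal edge-colouring with $\lceil n/2\rceil$ colours and a periodic vertex-colouring with a disjoint set of $\lfloor n/2\rfloor$ colours, so that the $2\lfloor n/2\rfloor-1$ edges and internal vertices of a longest geodesic are all distinct, giving $trc(C_n)\le n$. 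The matching lower bounds $\lceil n/2\rceil$ and $n$ again come from forcing many windows to be rainbow and then a periodicity/$\gcd$ contradiction, applied in the total case to the doubled cyclic sequence of alternating vertex and edge colours.

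The main obstacle will be the small cases and the exact thresholds $n\ge 16$ in (b) and $n\ge 13$ in (c), together with the non-monotone table entries (for instance $srvc(C_{11})=6$ but $srvc(C_{12})=5$, and the genuine gaps $rvc(C_n)\ne srvc(C_n)$ at $n=11,13,15$). For these the clean periodicity argument degenerates, and I expect each remaining value to demand either an ad hoc economical colouring or a bespoke counting argument ruling out one fewer colour. The most delicate point will be separating $rvc$ from $srvc$ in the borderline cases, where a rainbow-vertex-connected colouring may use a non-geodesic detour that a strongly rainbow-vertex-connected colouring is forbidden, so the two lower bounds must be established independently.
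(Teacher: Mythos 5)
Note first that Theorem~\ref{Cnthm} is not proved in this paper at all: it is a compilation of results quoted from \cite{CJMZ} (part (a)), \cite{LL2011} and \cite{LLLS} (part (b)), and \cite{LiuMS} (part (c)), so your attempt can only be measured against those sources. Your part (a) is essentially the standard proof and is correct, up to one repairable slip: for odd $n=2k+1$ the prescription ``colour $e_i$ and $e_{i+\lceil n/2\rceil}$ the same'' cannot be imposed for all $i$, because $\gcd(k+1,2k+1)=1$ would collapse the colouring to a single colour; what you need is $c(e_i)=i \bmod (k+1)$, after which your window argument for the upper bound and your gcd argument for the lower bound both go through.

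Parts (b) and (c), however, contain a genuine gap, and it is not confined to the small cases you set aside: the central mechanism you propose provably cannot deliver the stated lower bounds. The window-plus-gcd argument works in (a) only because, with $k$ colours on an odd cycle, each forced window has exactly $k$ edges, so a rainbow window exhausts the palette and adjacent windows determine one another. In the vertex and total settings this exactness is lost. For even $n$ the two antipodal paths are both geodesics, so even the \emph{strong} parameters force only one of the two windows per antipodal pair to be rainbow, and no periodicity follows. For odd $n$ the forced window has $\lfloor n/2\rfloor-1$ internal vertices (resp.\ $2\lfloor n/2\rfloor-1$ edges and internal vertices) against a palette of $\lceil n/2\rceil-1$ (resp.\ $n-1$) colours, so a rainbow window need not use every colour and comparing consecutive windows forces nothing; worse, when the window length divides $n$ the periodic colouring genuinely exists --- this is precisely why $srvc(C_9)=3$ rather than $\lceil 9/2\rceil=5$. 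The irregular table values ($rvc(C_{12})=5$, $trc(C_{12})=11$, $srvc(C_{11})=6>5=srvc(C_{12})$, and the gaps $rvc(C_n)\neq srvc(C_n)$ at $n=11,13,15$) certify that any correct proof of $rvc(C_n)=srvc(C_n)=\lceil n/2\rceil$ for $n\ge 16$ and of $trc(C_n)=n$ for $n\ge 13$ must use those thresholds in an essential way, which your sketch nowhere does. What you have actually established is part (a) and the upper bounds in (b) and (c); the matching lower bounds and every entry of the two tables --- the bulk of the theorem, and the reason the proofs in \cite{LL2011}, \cite{LLLS} and \cite{LiuMS} are lengthy case analyses --- are deferred to unspecified ``ad hoc'' colourings and counts that you do not supply.
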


Note that we have the slightly surprising facts that $rc(C_n)=src(C_n)$, but $rvc(C_n)=srvc(C_n)$ except for $n=11,13,15$; and that $srvc(C_{11})>srvc(C_{12})$. By taking advantage of the fact that $strc(C_n)\geq trc(C_n)$ and the proof of part (c) in \cite{LiuMS}, we have the following result for $strc(C_n)$.

\begin{thm}\label{strc(Cn)}
For $n\ge 3$, we have $strc(C_n)=trc(C_n)$. That is, for $3\le n\le 12$, the values of $strc(C_n)$ are given in the table in Theorem \ref{Cnthm}(c). For $n\geq 13$, we have $strc(C_n)=n$.
\end{thm}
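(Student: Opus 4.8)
The plan is to establish the two inequalities $strc(C_n) \ge trc(C_n)$ and $strc(C_n) \le trc(C_n)$ separately. The first is immediate: it is the general inequality $trc(G) \le strc(G)$ from (\ref{ineq3}), which holds for every connected graph. So the entire content lies in proving $strc(C_n) \le trc(C_n)$, i.e.\ in exhibiting, for each $n$, a strongly total rainbow connected total-colouring of $C_n$ that uses exactly $trc(C_n)$ colours. Since every strongly total rainbow connected colouring is in particular total rainbow connected, it suffices to show that the extremal total-colourings constructed in the proof of Theorem \ref{Cnthm}(c) in \cite{LiuMS} are not merely total-rainbow connecting, but actually connect every pair of vertices by a total-rainbow \emph{geodesic}.

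First I would treat the generic range $n \ge 13$, where $trc(C_n) = n$. Here I expect the natural colouring to be close to the all-distinct colouring: assign $n$ distinct colours so that edges and vertices are all coloured differently (or nearly so, matching the construction of \cite{LiuMS}). The key geometric fact about a cycle is that for any two vertices $u,v$ the unique geodesic is the shorter of the two arcs joining them, of length at most $\lfloor n/2\rfloor$. I would verify that along this shorter arc all the edge-colours and all the internal-vertex colours are pairwise distinct, so that the geodesic is automatically total-rainbow. Because the shorter arc has length at most $\lfloor n/2 \rfloor < n$, a colouring that makes \emph{every} edge and internal vertex of the whole cycle distinct would trivially work with $2n$ colours; the real task is to check that the more economical $n$-colour construction of \cite{LiuMS} still keeps each short arc rainbow. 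This is essentially a bookkeeping verification that no colour repeats within any window of length $\lfloor n/2\rfloor$ along the cycle.

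For the small cases $3 \le n \le 12$, I would simply inspect the explicit optimal total-colourings used to establish the table in Theorem \ref{Cnthm}(c). For each such $n$, one checks directly that the colouring achieving $trc(C_n)$ colours already connects every pair of vertices by a total-rainbow shortest path; since these are finitely many graphs of small order, this is a short finite check, and again the geodesic condition is easy because the shorter arcs are very short. Combining the two ranges with the trivial lower bound $strc(C_n) \ge trc(C_n)$ yields $strc(C_n) = trc(C_n)$ for all $n \ge 3$.

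The main obstacle is the verification, in the generic case, that the colouring from \cite{LiuMS} keeps every short arc total-rainbow; this requires understanding precisely how colours are repeated in their construction and confirming that any two equal colours are always placed more than $\lfloor n/2\rfloor$ apart along the cycle, so that they can never both lie on a single geodesic. If the construction of \cite{LiuMS} happens to repeat a colour at distance at most $\lfloor n/2\rfloor$, one would need to argue that the repeated colour never occurs as two relevant entries (edges or internal vertices) on the \emph{same} shortest arc, or else slightly modify the colouring while keeping the total at $n$. In short, the proof hinges on the observation that short arcs in a cycle are ``local'' enough that the economy of the $trc$-optimal colouring does not conflict with the stronger geodesic requirement.
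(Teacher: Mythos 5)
Your proposal follows essentially the same route as the paper: the lower bound is the general inequality $trc(C_n)\le strc(C_n)$, and the upper bound is obtained by checking that the extremal total-colourings of \cite{LiuMS} make every short arc (hence every geodesic) total-rainbow. The verification you flag as the ``main obstacle'' is exactly the property the paper quotes from the proof of Theorem \ref{Cnthm}(c) in \cite{LiuMS} --- every path of length $\lceil\frac{n}{2}\rceil$ (resp.\ $\lceil\frac{n}{2}\rceil-1$, together with the opposite-vertex pairs when $n$ is even) is total-rainbow --- so no modification of those colourings is needed.
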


\begin{proof}
One can easily check that $strc(C_3)=1$, $strc(C_4)=3$, and $strc(C_5)=3$. Now, let $n\geq 6$. We need to prove that $strc(C_n)\le trc(C_n)$. Thus by Theorem \ref{Cnthm}(c), we need to prove that $strc(C_n)\leq n-1$ for $6 \leq n\leq 10$ and $n=12$, and $strc(C_n)\leq n$ for $n=11$ and $n\geq 13$. The following facts were shown in the proof of Theorem \ref{Cnthm}(c) in \cite{LiuMS}.
\begin{itemize}
\item For $6\leq n\leq 10$ and $n=12$, there is a total-colouring of $C_n$, using $n-1$ colours, such that every path of length $\lceil\frac{n}{2}\rceil-1$ is total-rainbow, and when $n$ is even, any two opposite vertices of $C_n$ are connected by a total-rainbow path.
\item For $n=11$ and $n\geq 13$, there is a total-colouring of $C_n$, using $n$ colours, such that every path of length $\lceil\frac{n}{2}\rceil$ is total-rainbow.
\end{itemize}
With these total-colourings, it is easy to see that any two vertices $x$ and $y$ of $C_n$ are connected by a total-rainbow $x-y$ path of length at most $\lfloor\frac{n}{2}\rfloor$, which must also be a total-rainbow $x-y$ geodesic. Thus the total-colourings are also strong total rainbow connected colourings, and the upper bound $strc(C_n)\leq trc(C_n)$ follows.
\end{proof}

Next, we consider wheel graphs. The \emph{wheel} $W_n$ of order $n+1\ge 4$ is the graph obtained from the cycle $C_n$ by joining a new vertex $v$ to every vertex of $C_n$. The vertex $v$ is the \emph{centre} of $W_n$. Trivially, we have $rvc(W_3)=srvc(W_3)=0$, and $rvc(W_n)=srvc(W_n)=1$ for $n\ge 4$. The functions $rc(W_n)$ and $src(W_n)$ were determined by Chartrand et al.~\cite{CJMZ}, while $trc(W_n)$ was determined by Liu et al.~\cite{LiuMS}.

\begin{thm}\label{Wnthm}\textup{\cite{CJMZ,LiuMS}}
\indent\\[-3ex]
\begin{enumerate}
\item[(a)] $rc(W_3)=1$, $rc(W_n)=2$ for $4 \le n \le 6$, and $rc(W_n)=3$ for $n \ge 7$.
\item[(b)] $src(W_n)=\lceil\frac{n}{3}\rceil$ for $n\ge 3$.
\item[(c)] $trc(W_3)=1$, $trc(W_n)=3$ for $4\leq n\leq 6$, $trc(W_n)=4$ for $7\leq n\leq 9$, and $trc(W_n)=5$ for $n\geq 10$.
\end{enumerate}
\end{thm}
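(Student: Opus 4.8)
The plan is to handle all three parts through the same two guiding principles, applied to the structure of $W_n$: its centre $v$ together with the rim cycle $u_1,\dots,u_n$. The base cases are immediate, since $W_3=K_4$, so its values follow from Theorem \ref{thm1}(a), while for $n\ge 4$ we have $\diam(W_n)=2$, which already yields the lower bounds $rc(W_n)\ge 2$ and $trc(W_n)\ge 3$ via (\ref{ineq1}) and (\ref{ineq3}). The first principle is a length bound: an edge-coloured rainbow path on $k$ colours has length at most $k$, and a total-rainbow path on $k$ colours has length at most $\lfloor(k+1)/2\rfloor$, since its $\ell$ edges and $\ell-1$ internal vertices require $2\ell-1$ distinct colours. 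The second principle is geometric: two rim vertices at rim-distance at least $3$ have no common neighbour other than $v$, so their only path of length $2$ runs through the centre.

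For the lower bounds I would exploit these two facts together. Fix an optimal colouring with $k$ colours. By the length bound, rainbow (or total-rainbow) paths are short, so for rim vertices far apart on the rim the only admissible connecting path is the length-$2$ path through $v$, and this forces a constraint on the spoke colours. In the edge-coloured cases such a path requires the two spokes to have distinct colours; in the total-coloured case it additionally requires both spoke colours to differ from the colour of $v$, so the usable palette for spokes has size $k-1$. Hence each colour class of spokes must consist of rim vertices that are pairwise within rim-distance $2$, that is, an arc of at most three consecutive vertices, and a counting argument gives $src(W_n)\ge\lceil n/3\rceil$, while for the other versions it gives $rc(W_n)\ge 3$ once $n\ge 7$, $trc(W_n)\ge 4$ once $n\ge 7$, and $trc(W_n)\ge 5$ once $n\ge 10$. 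In each case the threshold is the least $n$ at which arcs of three consecutive rim vertices, one per available spoke colour, can no longer reach all the way around the rim. The short cycles ($n\le 6$, and $n\le 9$ for $trc$) are checked by hand, since the naive arc bound is not tight there.

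For the upper bounds I would give explicit colourings and verify them by a short case analysis on rim-distance. For $src$ the natural construction colours the spokes in consecutive blocks of three with colours $1,2,\dots,\lceil n/3\rceil$, so that any two rim vertices at rim-distance at least $3$ lie in different blocks and are joined by a rainbow geodesic through $v$; the rim edges are then coloured so that each within-block pair at rim-distance $2$ has a rainbow geodesic along the rim, with attention paid to the wrap-around block when $3\nmid n$. The edge-coloured and total-coloured values for $4\le n\le 6$ and for $7\le n\le 9$ are obtained by small, ad hoc colourings using exactly the permitted number of colours.

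The step I expect to be the hardest is the upper bound $trc(W_n)\le 5$ for $n\ge 10$. Here the length bound permits total-rainbow paths of length $3$, so far rim pairs need no longer be routed by a single length-$2$ path through $v$; instead one routes $u_i,u_j$ along $u_i - v - u_k - u_j$ for a suitable neighbour $u_k$ of $u_j$. The difficulty is to colour the spokes, the rim edges, the rim vertices and the centre simultaneously, using only five colours, so that for every far pair such a $u_k$ exists with the five elements $c(vu_i),c(v),c(vu_k),c(u_k),c(u_ku_j)$ all distinct, while also handling the rim-distance-$1$ and rim-distance-$2$ pairs and the centre-to-rim pairs. My plan would be to fix the centre to one colour, colour the spokes with two colours in an almost alternating pattern so that every rim vertex has a neighbour of opposite spoke colour, and deploy the remaining two colours in a periodic pattern on the rim vertices and rim edges; the verification then reduces to checking the few residues of $n$ modulo the period, together with the parity defect of the alternating spoke pattern when $n$ is odd. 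Managing these boundary effects, rather than any single inequality, is where the real work lies.
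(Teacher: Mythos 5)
A preliminary remark: the paper itself does not prove Theorem \ref{Wnthm}; it quotes it from \cite{CJMZ} and \cite{LiuMS}. Measured against those original arguments, your proposal is essentially a correct reconstruction of the standard approach. The two principles you isolate (the length bound $2\ell-1\le k$ for total-rainbow paths, and the fact that rim vertices at rim-distance at least $3$ meet only through the centre) are exactly the right ingredients; your arc-of-three counting for spoke colour classes gives precisely the thresholds $n=7$ and $n=10$, including the loss of one spoke colour in the total case because every spoke colour must differ from $c(v)$ once $n\ge 7$; and your sketch for $trc(W_n)\le 5$ can indeed be instantiated, more simply than you fear: colour the centre $5$, the spokes alternately $1,2$ (one defect when $n$ is odd), all rim vertices $3$ and all rim edges $4$; a pair $u_i,u_j$ with equal spoke colours is then joined by $u_i v u_k u_j$, where $u_k$ is a rim neighbour of $u_j$ of opposite spoke colour, and this path has colour set $\{1,2,3,4,5\}$. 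So the "periodic pattern" and boundary-effect worries essentially evaporate.

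The one genuine gap is in part (a): your plan never establishes $rc(W_n)\le 3$ for $n\ge 10$. You prove the lower bound $rc(W_n)\ge 3$ for $n\ge 7$ by the arc argument, but on the upper-bound side your explicit colourings are scoped to $4\le n\le 6$ and $7\le n\le 9$, and the only general-$n$ constructions you give are the $src$ block colouring (which uses $\lceil n/3\rceil$ colours, far too many) and the five-colour $trc$ colouring. Since part (a) asserts $rc(W_n)=3$ for \emph{all} $n\ge 7$, the range $n\ge 10$ is left uncovered. The fix is immediate with tools already in your proposal: either colour the spokes alternately $1,2$ (one defect when $n$ is odd) and all rim edges $3$, routing a same-spoke-colour pair $u_i,u_j$ along $u_i v u_k u_j$ with $u_k$ a neighbour of $u_j$ whose spoke has the other colour; or simply observe that every total-rainbow path is edge-rainbow, so the five-colour $trc$ colouring above, which uses only three colours on edges, already witnesses $rc(W_n)\le 3$. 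Either way, this case must be stated; as written, part (a) is not fully proved.
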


In the next result, we determine the function $strc(W_n)$. The proof is partially based on the fact that $strc(W_n)\geq trc(W_n)$.

\begin{thm}\label{strc(Wn)}
$strc(W_3)=1$, and $strc(W_n)=\lceil\frac{n}{3}\rceil+1$ for $n\ge 4$.
\end{thm}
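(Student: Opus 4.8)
The plan is to prove the two inequalities $strc(W_n)\le\lceil n/3\rceil+1$ and $strc(W_n)\ge\lceil n/3\rceil+1$ separately for $n\ge 4$, after disposing of the case $n=3$. Since $W_3=K_4$ is complete, Theorem \ref{thm1}(a) gives $strc(W_3)=1$. For $n\ge 4$ note that $\diam(W_n)=2$: every cycle vertex is adjacent to the centre $v$, and any two non-adjacent cycle vertices are joined through $v$ by a path of length $2$. Hence Proposition \ref{diam=2} together with Theorem \ref{Wnthm}(b) immediately yields the upper bound $strc(W_n)\le src(W_n)+1=\lceil n/3\rceil+1$. So the whole content of the theorem lies in the matching lower bound.

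For the lower bound I would split into small and large $n$. When $4\le n\le 5$, one checks that $\lceil n/3\rceil+1=3=trc(W_n)$ by Theorem \ref{Wnthm}(c), so the inequality $strc(W_n)\ge trc(W_n)$ from (\ref{ineq3}) already gives $strc(W_n)\ge\lceil n/3\rceil+1$. This is the part of the argument that rests on $strc(W_n)\ge trc(W_n)$.

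The main work is the lower bound for $n\ge 6$. Fix any strongly total rainbow connected total-colouring of $W_n$, write $e_i=vv_i$ for the spokes, and argue as follows. Whenever $v_i$ and $v_j$ lie at cycle-distance at least $3$, their unique geodesic in $W_n$ is $v_i-v-v_j$ (the centre is then their only common neighbour), so this path must be total-rainbow; in particular $c(e_i)\ne c(e_j)$ and $c(e_i),c(e_j)\ne c(v)$. The first consequence shows that spokes of a common colour can belong only to cycle vertices that are pairwise within cycle-distance $2$, and such a set of vertices of $C_n$ has size at most $3$; hence the spokes already use at least $\lceil n/3\rceil$ colours, exactly as in the computation of $src(W_n)$. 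The second consequence, valid for \emph{every} $i$ because each $v_i$ has the cycle-distance-$3$ partner $v_{i+3}$ once $n\ge 6$, shows that $c(v)$ differs from the colour of every spoke. Therefore $c(v)$ is a colour not counted among the $\ge\lceil n/3\rceil$ spoke colours, giving $strc(W_n)\ge\lceil n/3\rceil+1$.

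The only delicate point is the case split itself: the clean spoke-plus-centre argument needs every vertex to possess a partner at cycle-distance $\ge 3$, which fails for $n\le 5$, and this is precisely why the small cases are routed through $strc(W_n)\ge trc(W_n)$. I would take care to verify the two combinatorial facts underpinning the large-$n$ bound, namely that a set of vertices of $C_n$ pairwise within cycle-distance $2$ has at most $3$ elements for all $n\ge 6$, and that the geodesic joining two cycle vertices at cycle-distance $\ge 3$ is genuinely unique; both are elementary but are what make the lower bound work.
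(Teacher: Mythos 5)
Your proof is correct and follows essentially the same route as the paper: the identical upper bound via Proposition \ref{diam=2} and Theorem \ref{Wnthm}(b), the small cases via $strc(W_n)\ge trc(W_n)$, and a lower bound resting on the observation that two cycle vertices at cycle-distance at least $3$ have the centre as the unique midpoint of their geodesic, forcing the spoke colours to be distinct from each other and from $c(v)$. The only cosmetic differences are that you argue the lower bound directly (each spoke colour class has size at most $3$, so the spokes need $\lceil n/3\rceil$ colours, and $c(v)$ is one more) where the paper assumes $\le\lceil n/3\rceil$ colours and derives a contradiction by pigeonhole on four equally coloured spokes, and that you absorb $n=6$ into the main argument rather than routing it through $trc(W_6)$.
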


\begin{proof}
Let $v$ be the centre of $W_n$, and $v_1,v_2,\ldots,v_n$ be the vertices of $W_n$ in the cycle $C_n$. Since $W_3$ is precisely the complete graph $K_4$, we have $strc(W_3)=1$.

Now, let $n\geq 4$. Since $\diam(W_n)=2$, by Proposition \ref{diam=2} and Theorem \ref{Wnthm}(b), we have $strc(W_n)\leq src(W_n)+1=\lceil \frac{n}{3}\rceil+1$. Also, by Theorem \ref{Wnthm}(c), we have $strc(W_n)\geq trc(W_n)=3=\lceil\frac{n}{3}\rceil+1$ for $4\leq n\leq 6$. It remains to show that $strc(W_n)\geq \lceil\frac{n}{3}\rceil+1$ for $n\geq 7$. Assume the contrary, and suppose that there is a strongly total rainbow connected colouring $c$ of $W_n$, using at most $\lceil\frac{n}{3}\rceil$ colours. Since $n\geq 7$, for each vertex $v_i$, there exists at least one vertex $v_j$ with $j\neq i$ such that the unique $v_i-v_j$ geodesic of length $2$ passes the centre $v$. Thus,  $c(v)\neq c(vv_i)$ for $i=1,2,\ldots,n$. Therefore, the $n$ edges $vv_i$ use at most $\lceil\frac{n}{3}\rceil-1<\frac{n}{3}$ different colours. One can deduce that there exist at least four different edges, say $vv_i$, $vv_j$, $vv_k$, $vv_\ell$, such that $c(vv_i)=c(vv_j)=c(vv_k)=c(vv_\ell)$. Again, since $n\geq 7$, we may assume that the unique $v_i-v_j$ geodesic is precisely the path $v_ivv_j$. So, there is no total-rainbow $v_i-v_j$ geodesic, a contradiction. Consequently, $strc(W_n)\geq \lceil\frac{n}{3}\rceil+1$ for $n\geq 7$.
\end{proof}

Our next aim is to consider complete bipartite graphs $K_{m,n}$. Clearly we have $rc(K_{1,n})=src(K_{1,n})=n$; $rvc(K_{1,1})=srvc(K_{1,1})=0$ and $rvc(K_{m,n})=srvc(K_{m,n})=1$ for $(m,n)\neq (1,1)$; and $trc(K_{1,1})=strc(K_{1,1})=1$ and $trc(K_{1,n})=strc(K_{1,n})=n+1$ for $n\geq 2$. For $2\le m\le n$, the functions $rc(K_{m,n})$ and $src(K_{m,n})$ were determined by Chartrand et al.~\cite{CJMZ}, and the function $trc(K_{m,n})$ was determined by Liu et al.~\cite{LiuMS}.

\begin{thm}\label{Kmnthm}\textup{\cite{CJMZ,LiuMS}}\,
Let $2\leq m\leq n$. We have the following.
\begin{enumerate}
\item[(a)] $rc(K_{m,n})=\min(\lceil\!\sqrt[m]{n}\,\rceil,4)$.
\item[(b)] $src(K_{m,n})=\lceil\!\sqrt[m]{n}\,\rceil$.
\item[(c)] $trc(K_{m,n})=\min(\lceil\!\sqrt[m]{n}\,\rceil+1,7)$.
\end{enumerate}
\end{thm}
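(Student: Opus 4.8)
The plan is to treat all three parameters through a common colour-vector encoding, exploiting the fact that $K_{m,n}$ has diameter $2$, so every geodesic has length $1$ or $2$. Write $A=\{a_1,\dots,a_m\}$ and $B=\{b_1,\dots,b_n\}$ for the two parts. Given an edge-colouring with palette $[c]$, I record for each $b_j\in B$ the vector $w(b_j)=(c(a_1b_j),\dots,c(a_mb_j))\in[c]^m$, and likewise for the $a_i$. Vertices in different parts are adjacent, so only pairs inside $A$ or inside $B$ require attention, and since $m\le n$ the binding constraints come from $B$.

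For part (b) I would first note that a geodesic between two vertices of $B$ has length exactly $2$, say $b_i-a_\ell-b_j$, and is rainbow precisely when $c(a_\ell b_i)\ne c(a_\ell b_j)$, i.e.\ when $w(b_i)$ and $w(b_j)$ differ in coordinate $\ell$. Hence a colouring is strongly rainbow connected on $B$ if and only if the vectors $w(b_1),\dots,w(b_n)$ are pairwise distinct, which forces $c^m\ge n$ and yields the lower bound $src(K_{m,n})\ge\lceil\sqrt[m]{n}\,\rceil$; the analogous requirement on $A$ (distinct row-vectors) is weaker. For the matching upper bound I would exhibit an $m\times n$ matrix over $\lceil\sqrt[m]{n}\,\rceil$ symbols with all columns distinct and all rows distinct. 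This is the one routine construction to verify, and it is cleanest in the extremal case $n=c^m$, where all vectors of $[c]^m$ must be used and the rows are then automatically distinct for $c\ge2$.

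For part (a) the extra freedom is that rainbow connection permits longer paths. The key observation is that with at most $3$ colours a rainbow $b_i-b_j$ path cannot have length $4$ (that would need four distinct edge-colours), and since $B$--$B$ paths have even length, such a path must have length $2$. Thus any successful colouring with $c\le 3$ again forces distinct vectors on $B$, so $c\ge\lceil\sqrt[m]{n}\,\rceil$; combined with $rc\le src=\lceil\sqrt[m]{n}\,\rceil$ from (\ref{ineq1}) and part (b), this settles the case $\lceil\sqrt[m]{n}\,\rceil\le 3$. When $\lceil\sqrt[m]{n}\,\rceil\ge 4$ the same dichotomy gives $rc\ge 4$, and it remains to produce a fixed $4$-colouring that rainbow-connects every $K_{m,n}$ using length-$4$ paths through two $A$-vertices and a relay vertex of $B$.

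Part (c) is where the real work lies. The upper bound $trc(K_{m,n})\le\lceil\sqrt[m]{n}\,\rceil+1$ is immediate from Proposition \ref{diam=2} via $trc\le strc\le src+1$: colour the edges by an optimal $src$-colouring and all vertices with one fresh colour, making every length-$2$ geodesic total-rainbow. For the lower bound, a total-rainbow path $b_i-a_\ell-b_j$ now additionally requires both edge-colours to differ from the vertex colour $\gamma_\ell:=c(a_\ell)$, and with $c\le 6$ colours a length-$4$ path (seven coloured elements) cannot be total-rainbow, so again only length-$2$ paths are available. This reduces matters to the following extremal statement, which I expect to be the main obstacle: if $w_1,\dots,w_n\in[c]^m$ and $\gamma\in[c]^m$ satisfy that every pair $w_i,w_j$ differs in some coordinate $\ell$ with $(w_i)_\ell\ne\gamma_\ell\ne(w_j)_\ell$, then $n\le(c-1)^m$. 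I would prove this by first relabelling colours coordinatewise so that $\gamma_\ell=c$ for all $\ell$, then assigning to each $w$ the completion set $D(w)=\{x\in\{1,\dots,c-1\}^m:\ x_\ell=(w)_\ell\text{ whenever }(w)_\ell\ne c\}$. Each $D(w)$ is nonempty and lies in $\{1,\dots,c-1\}^m$, and the $D(w)$ are pairwise disjoint, since a common point $x$ would have to agree with both $w_i$ and $w_j$ at a separating coordinate, contradicting $(w_i)_\ell\ne(w_j)_\ell$. Summing sizes gives $n\le(c-1)^m$, hence $c\ge\lceil\sqrt[m]{n}\,\rceil+1$. Finally, for the cap, $c\le 6$ forces $n\le 5^m$, so whenever $\lceil\sqrt[m]{n}\,\rceil\ge 6$ we must have $c\ge 7$; a universal $7$-colouring (again built from length-$4$ relays, with all vertices receiving a common colour disjoint from the edge palette on each such path) then supplies the matching bound $trc\le 7$.
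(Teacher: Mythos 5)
Your lower-bound arguments are correct, and parts (a) and (b) are essentially the standard route (distinct colour-vectors on the large class), complete up to the routine distinct-rows/distinct-columns matrix. The genuine gap is in the cap of part (c), i.e.\ the upper bound $trc(K_{m,n})\le 7$ when $\lceil\!\sqrt[m]{n}\,\rceil\ge 6$. Your sketched construction --- ``length-$4$ relays, with all vertices receiving a common colour'' --- provably cannot work: if all vertices share one colour, every path of length $4$ has three internal vertices of the same colour and is never total-rainbow, so only length-$2$ paths are usable; but then your own packing bound applied with $c=7$ forces $n\le 6^m$, so this scheme fails for every $n>6^m$, which is exactly the regime the cap is for. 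A correct construction needs (at least) three distinct vertex colours, so that the internal vertices $u_1,w'',u_j$ of a relay path $w\,u_1\,w''\,u_j\,w'$ are rainbow (e.g.\ give $u_1$, the vertices of $A\setminus\{u_1\}$, and $B$ three different colours), and it must separately handle pairs inside $A\setminus\{u_1\}$ --- under such a vertex-colouring no length-$4$ path between them can be total-rainbow (its two internal $B$-vertices share a colour), so one must make the edge-colour rows of $A\setminus\{u_1\}$ pairwise distinct over two colours to create total-rainbow length-$2$ paths. The cap in part (a) is also left unconstructed, but there your idea is right and the standard colouring finishes it in two lines: split $B=B_1\cup B_2$ and colour the edges from $u_1$ to $B_1$, $u_1$ to $B_2$, $A\setminus\{u_1\}$ to $B_1$, $A\setminus\{u_1\}$ to $B_2$ with colours $1,2,3,4$ respectively; every problematic pair is joined by a rainbow path of length $2$ or $4$.

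On the comparison: the paper does not actually prove Theorem \ref{Kmnthm} (it cites \cite{CJMZ,LiuMS}); its closest in-house argument is the lower bound of Theorem \ref{strc(Kmn)}, adapted from the proof of part (c) in \cite{LiuMS}, which uses a sequence of refining partitions of $V$, each coordinate splitting parts into at most $b-1$ classes by merging the colours $c(u_i)$ and $c(u_i)+1$. Your ``completion set'' argument --- assigning to each vector $w$ the nonempty set $D(w)\subseteq\{1,\dots,c-1\}^m$ and showing these sets are pairwise disjoint --- is a genuinely different and arguably cleaner packaging of the same pigeonhole phenomenon, and it yields the identical bound $n\le(c-1)^m$. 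Had the upper-bound constructions been supplied, this would have been a valid alternative proof; as it stands, the $trc\le 7$ step is not a gap one can wave away, since it is precisely where the constant $7$ in the statement comes from.
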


In the next result, we will determine $strc(K_{m,n})$ for $2\leq m\leq n$.

\begin{thm}\label{strc(Kmn)}
For $2\leq m\leq n$, we have $strc(K_{m,n})=\lceil\!\sqrt[m]{n}\,\rceil+1$.
\end{thm}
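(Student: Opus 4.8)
The plan is to prove the two inequalities $strc(K_{m,n})\le \lceil\!\sqrt[m]{n}\,\rceil+1$ and $strc(K_{m,n})\ge \lceil\!\sqrt[m]{n}\,\rceil+1$ separately. For the upper bound, I would first observe that $\diam(K_{m,n})=2$, so Proposition \ref{diam=2} applies and gives $strc(K_{m,n})\le src(K_{m,n})+1$. Combining this with Theorem \ref{Kmnthm}(b), which states $src(K_{m,n})=\lceil\!\sqrt[m]{n}\,\rceil$, immediately yields $strc(K_{m,n})\le\lceil\!\sqrt[m]{n}\,\rceil+1$. This half is essentially free given the earlier results.

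The substance of the proof is the lower bound $strc(K_{m,n})\ge\lceil\!\sqrt[m]{n}\,\rceil+1$. Write $k=\lceil\!\sqrt[m]{n}\,\rceil$, and let the two colour classes of $K_{m,n}$ be $A=\{a_1,\dots,a_m\}$ and $B=\{b_1,\dots,b_n\}$. Suppose for contradiction that there is a strongly total rainbow connected total-colouring $c$ using at most $k$ colours. The key structural fact is that any two vertices in the same part $B$ are at distance $2$, and their unique geodesics of length $2$ all pass through a single vertex of $A$; such a geodesic $b_i\,a_\ell\,b_j$ is total-rainbow precisely when the three elements $c(a_\ell b_i)$, $c(a_\ell)$, $c(a_\ell b_j)$ are pairwise distinct. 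I would encode each vertex $b_j\in B$ by the vector of edge-colours on its $m$ incident edges, i.e.\ the tuple $\bigl(c(a_1 b_j),\dots,c(a_m b_j)\bigr)\in\{1,\dots,k\}^m$. The strong-connectivity requirement forces these $n$ vectors to be suitably distinct, and a counting argument of the type $k^m\ge n$ underlies the bound; the extra $+1$ must be squeezed out by showing that $k$ edge-colours alone cannot simultaneously satisfy the distinctness-of-vectors condition \emph{and} the requirement that the middle vertex $a_\ell$ receive a colour distinct from both edge-colours on every used geodesic.

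The main obstacle, and the place where the argument differs from the $src$ lower bound in Theorem \ref{Kmnthm}(b), is handling the vertex colour of the middle vertex $a_\ell$ on each geodesic. In the strong rainbow connection setting one only needs the two edge-colours of a geodesic $b_i\,a_\ell\,b_j$ to differ, so $src=k$ follows from $k^m\ge n$ together with a tightness argument. For $strc$, the geodesic must additionally avoid reusing $c(a_\ell)$ on its edges. I would argue that if only $k$ colours were available, then for some part and some pair of vertices every candidate middle vertex would be forced into a colour conflict: concretely, I expect to show that the vectors must all be distinct (else two vertices of $B$ sharing an edge-colour to a common $a_\ell$ would have no total-rainbow geodesic through $a_\ell$, and one checks no other geodesic helps), so $n\le k^m$ forces near-equality, and in the extremal regime the pigeonhole pressure on the edge-colours leaves no colour free for $c(a_\ell)$, contradicting total-rainbowness. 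Care is needed when $n$ is much smaller than $k^m$, so I would split into the cases according to whether $\lceil\!\sqrt[m]{n}\,\rceil$ is achieved tightly; the boundary cases are where I anticipate the bookkeeping to be delicate, and I would treat small $m$ (especially $m=2$) explicitly if the generic counting argument does not cover them cleanly.
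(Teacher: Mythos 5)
Your upper bound is exactly the paper's: $\diam(K_{m,n})=2$ together with Proposition \ref{diam=2} and Theorem \ref{Kmnthm}(b). The genuine gap is in the lower bound. Write $k=\lceil\!\sqrt[m]{n}\,\rceil$; the definition of $k$ gives only $(k-1)^m<n\le k^m$, so $n$ may be as small as $(k-1)^m+1$, which for large $k$ is nowhere near $k^m$. Your plan rests on two steps: (i) the edge-colour vectors of the vertices of $B$ must be pairwise distinct, hence $n\le k^m$; and (ii) ``near-equality'' then creates pigeonhole pressure leaving no colour free for the middle vertex. Step (i) is true but produces no contradiction whatsoever --- $n\le k^m$ is automatically consistent with using $k$ colours --- and step (ii) fails outright in the regime $n\approx(k-1)^m+1$, where the distinctness constraint is slack and nothing forces near-equality. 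Splitting into cases according to whether $\lceil\!\sqrt[m]{n}\,\rceil$ is ``achieved tightly'' cannot rescue this, because the theorem has to be proved for every $n$ with $(k-1)^m<n\le k^m$, not only the extremal ones.

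The counting you actually need compares $n$ with $(k-1)^m$, not with $k^m$: one must show that a total-colouring with $k$ colours can strongly separate at most $(k-1)^m$ vertices of $B$. The mechanism (this is the paper's argument, adapted from Liu et al.'s lower bound for $trc(K_{m,n})$) is that in coordinate $i$ the vertex colour $c(a_i)$ effectively merges two edge-colour classes into one: a vertex $v$ with $c(a_iv)=c(a_i)$ can never lie on a total-rainbow geodesic through $a_i$, so the class $\{v: c(a_iv)=c(a_i)\}$ may be lumped with one fixed other class, say $\{v: c(a_iv)\equiv c(a_i)+1 \pmod k\}$; any two vertices of the merged class fail at $a_i$, either because their edge colours agree or because one edge colour equals $c(a_i)$. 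Hence each coordinate distinguishes at most $k-1$ groups, and iterating over $i=1,\dots,m$ (the paper formalises this with a chain of refining partitions $\mathcal P_0\prec\mathcal P_1\prec\cdots\prec\mathcal P_m$ with $|\mathcal P_i|\le(k-1)^i$) partitions $B$ into at most $(k-1)^m<n$ parts such that two vertices in a common part are joined by no total-rainbow geodesic at all, a contradiction. Finally, the case $k=2$ (i.e.\ $n\le 2^m$) needs separate, trivial, treatment: a geodesic of length $2$ requires three colours to be total-rainbow, so $strc(K_{m,n})\ge 3=k+1$ there; your outline omits this case.
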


\begin{proof}
Since $\diam(K_{m,n})=2$, we have $strc(K_{m,n})\leq src(K_{m,n})+1=\lceil\!\sqrt[m]{n}\,\rceil+1$ by Proposition \ref{diam=2} and Theorem \ref{Kmnthm}(b).

Now we prove the lower bound $strc(K_{m,n})\geq \lceil\!\sqrt[m]{n}\,\rceil+1$. This proof will be a slight modification of the proof of the lower bound of Theorem \ref{Kmnthm}(c) in \cite{LiuMS}, but we provide it for the sake of clarity. Let the classes of $K_{m,n}$ be $U=\{u_1,\dots,u_m\}$ and $V$, where $|V|=n$. Let $b=\lceil\!\sqrt[m]{n}\,\rceil\geq 2$. If $m\leq n\leq 2^m$, then $strc(K_{m,n})\geq 3=b+1$. Now let $n>2^m$, so that $b\geq 3$. We have $(b-1)^m<n\le b^m$. Let $c$ be a total-colouring of $K_{m,n}$, using colours from $\{1,\dots , b\}$. For $v\in V$, assign $v$ with the vector $\vec{v}$ of length $m$, where $\vec{v}_i=c(u_iv)$ for $1\le i\le m$. For two partitions $\mathcal P$ and $\mathcal P'$ of $V$, we say that $\mathcal P$ \emph{refines} $\mathcal P'$, written $\mathcal P'\prec\mathcal P$, if for all $A\in\mathcal P$, we have $A\subseteq B$ for some $B\in\mathcal P'$. In other words, $\mathcal P$ can be obtained from $\mathcal P'$ by partitioning some of the sets of $\mathcal P'$. We define a sequence of refining partitions $\mathcal P_0\prec\mathcal P_1\prec\cdots\prec\mathcal P_m$ of $V$, with $|\mathcal P_i|\le (b-1)^i$ for $0\le i\le m$, as follows. Initially, set $\mathcal P_0=\{V\}$. Now, for $1\le i\le m$, suppose that we have defined $\mathcal P_{i-1}$ with $|\mathcal P_{i-1}|\le (b-1)^{i-1}$. Let $\mathcal P_{i-1}=\{A_1,\dots , A_\ell\}$, where $\ell\le (b-1)^{i-1}$. Define $\mathcal P_i$ as follows. For $1\le q\le\ell$ and $A_q\in \mathcal P_{i-1}$, let
\begin{align}
B_1^q &= \{v\in A_q : \vec{v}_i=c(u_i)\textup{ or }c(u_i)+1\textup{ (mod }b)\},\nonumber\\
B_r^q &= \{v\in A_q : \vec{v}_i=c(u_i)+r\textup{ (mod }b)\}\textup{, for }2\le r\le b-1.\nonumber
\end{align}

Let $\mathcal P_i=\{B_r^q:1\le q\le\ell$, $1\le r\le b-1$ and $B_r^q\neq\emptyset\}$, so that $\mathcal P_i$ is a partition of $V$ with $|\mathcal P_i|\le (b-1)^i$ and $\mathcal P_{i-1}\prec\mathcal P_i$. Proceeding inductively, we obtain the partitions $\mathcal P_0\prec\mathcal P_1\prec\cdots\prec\mathcal P_m$ of $V$, with $|\mathcal P_i|\le (b-1)^i$ for $0\le i\le m$. Now, observe that for every $1\le i\le m$, and any two vertices $y$ and $z$ in the same set in $\mathcal P_i$, the path $yu_iz$ is not total-rainbow, since $c(u_iy)=\vec{y}_i$ and $c(u_iz)=\vec{z}_i$ are either in $\{c(u_i),c(u_i)+1\}$ (mod $b)$, or they are both $c(u_i)+r$ (mod $b)$ for some $2\le r\le b-1$. Since $n>(b-1)^m\ge |\mathcal P_m|$, there exists a set in $\mathcal P_m$ with at least two vertices $w$ and $x$, and since $\mathcal P_1\prec\cdots\prec\mathcal P_m$, this means that $w$ and $x$ are in the same set in $\mathcal P_i$ for every $1\le i\le m$. Therefore, $wu_ix$ is not a total-rainbow path for every $1\le i\le m$. Since the paths $wu_ix$ are all the possible $w-x$ geodesics (with length $2$) in $K_{m,n}$, it follows that there does not exist a total-rainbow $w-x$ geodesic. Hence, $c$ is not a strongly total rainbow connected colouring of $K_{m,n}$, and $strc(K_{m,n})\ge b+1$.
\end{proof}

To conclude this section, we consider complete multipartite graphs. Let $K_{n_1,\ldots,n_t}$ denote the complete multipartite graph with $t\ge 3$ classes, where $1\le n_1\le\cdots\le n_t$ are the class sizes. Clearly, we have $rvc(K_{n_1,\ldots,n_t})=srvc(K_{n_1,\ldots,n_t})=0$ (resp.~$1$) if $n_t=1$ (resp.~$n_t\ge 2$). The functions $rc(K_{n_1,\ldots,n_t})$ and $src(K_{n_1,\ldots,n_t})$ were determined by Chartrand et al.~\cite{CJMZ}, and the function $trc(K_{n_1,\ldots,n_t})$ was determined by Liu et al.~\cite{LiuMS}, as follows.

\begin{thm}\label{mpthm}\textup{\cite{CJMZ,LiuMS}}\,
Let $G=K_{n_1,\dots,n_t}$, where $t\ge 3$, $1\le n_1\le\cdots\le n_t$, $m=\sum_{i=1}^{t-1}n_i$ and $n_t=n$. Then, the functions $rc(G), src(G)$ and $trc(G)$ are given in the following table.
\[
\begin{array}{|c||c|c|c|}
\hline
& n=1 & n\ge 2\textup{ \emph{and} }m>n & m\le n\\
\hline
rc(G) & 1 & 2 & \min(\lceil\!\sqrt[m]{n}\,\rceil,3)\\
\hline
src(G) & 1 & 2 & \lceil\!\sqrt[m]{n}\,\rceil\\
\hline
trc(G) & 1 & 3 & \min(\lceil\!\sqrt[m]{n}\,\rceil+1,5)\\
\hline
\end{array}
\]
%
%
%\begin{enumerate}
%\item[(a)]
%\[
%rc(K_{n_1,\dots,n_t})=
%\left\{
%\begin{array}{l@{\quad\quad}l}
%1 & \textup{\emph{if} }n=1\textup{\emph{,}}\\
%2 & \textup{\emph{if} }n\ge 2\textup{\emph{ and }}m>n\textup{\emph{,}}\\
%\min(\lceil\!\sqrt[m]{n}\,\rceil,3) & \textup{\emph{if} }m\le n.
%\end{array}
%\right.
%\]
%\item[(b)]
%\[
%src(K_{n_1,\dots,n_t})=
%\left\{
%\begin{array}{l@{\quad\quad}l}
%1 & \textup{\emph{if} }n=1\textup{\emph{,}}\\
%2 & \textup{\emph{if} }n\ge 2\textup{\emph{ and }}m>n\textup{\emph{,}}\\
%\lceil\!\sqrt[m]{n}\,\rceil & \textup{\emph{if} }m\le n.
%\end{array}
%\right.
%\]
%\item[(c)]
%\[
%trc(K_{n_1,\dots,n_t})=
%\left\{
%\begin{array}{l@{\quad\quad}l}
%1 & \textup{\emph{if} }n=1\textup{\emph{,}}\\
%3 & \textup{\emph{if} }n\ge 2\textup{\emph{ and }}m>n\textup{\emph{,}}\\
%\min(\lceil\!\sqrt[m]{n}\,\rceil+1,5) & \textup{\emph{if} }m\le n.
%\end{array}
%\right.
%\]
%\end{enumerate}
\end{thm}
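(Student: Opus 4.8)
The plan is to organise the proof by the three columns of the table and to exploit throughout that $G$ has diameter $2$ (unless $n=1$, when $G=K_t$ is complete and all three parameters equal $1$ by Theorem \ref{thm1}(a)). For the column $n\ge 2$, $m>n$: since $G$ is not complete, the lower bounds $rc(G)\ge 2$, $src(G)\ge 2$ and $trc(G)\ge 3$ follow from Theorem \ref{thm1}. For the upper bounds it suffices to produce a strongly rainbow connected $2$-edge-colouring, which I would do by assigning to each vertex $w$ of a class $V_j$ the ``colour vector'' recording $c(wu)$ over all $u\notin V_j$, and checking that when $m>n$ there is enough room, since $\text{total}-n_j\ge m>n\ge n_j$ gives $2^{(\text{total}-n_j)}>n_j$ for every $j$, to make the vectors within each class pairwise distinct. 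Any two same-class vertices then differ in some coordinate $u$, giving a rainbow geodesic $w$-$u$-$x$, while different-class vertices are adjacent. Hence $rc(G)=src(G)=2$, and $trc(G)=3$ follows from the last part of Theorem \ref{thm1}(c), namely $rc(G)=2\Rightarrow trc(G)=3$.

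The substantial case is $m\le n$. The common device is that any two vertices $w,x$ in the largest class $V_t$ are non-adjacent with exactly the $m$ outside vertices as midpoints of their length-$2$ geodesics; assigning each $v\in V_t$ the vector $\vec v\in\{1,\dots,b\}^m$ with $\vec v_i=c(u_iv)$, a rainbow geodesic for $\{w,x\}$ exists iff $\vec w\ne\vec x$. For $src$ this forces all vectors distinct, so $b^m\ge n$ and $src(G)\ge\lceil\sqrt[m]{n}\rceil$; the matching colouring assigns distinct vectors in $V_t$ (possible as $b^m\ge n$) and handles the smaller classes analogously, giving $src(G)=\lceil\sqrt[m]{n}\rceil$. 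For $rc$, a rainbow path on only $2$ colours has length at most $2$, so a $2$-colouring again rainbow-connects $V_t$-pairs only through length-$2$ paths, demanding distinct vectors in $\{1,2\}^m$ and failing once $n>2^m$; thus $rc(G)=2$ when $\lceil\sqrt[m]{n}\rceil=2$ and $rc(G)\ge 3$ otherwise. The key extra feature of multipartite graphs with $t\ge 3$ is that $w,x\in V_t$ can also be joined by a length-$3$ path $w$-$u$-$u'$-$x$ with $u,u'$ in distinct outside classes; a suitable $3$-edge-colouring makes such a path rainbow for every same-class pair, yielding $rc(G)\le 3$ and hence $rc(G)=\min(\lceil\sqrt[m]{n}\rceil,3)$.

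For $trc$ with $m\le n$, the bound $trc(G)\le\lceil\sqrt[m]{n}\rceil+1$ comes directly from $trc(G)\le strc(G)\le src(G)+1$ via Proposition \ref{diam=2}. To reach the cap I would build a total-colouring with $5$ colours under which every same-class pair is joined by a total-rainbow length-$3$ path $w$-$u$-$u'$-$x$ (its $3$ edges and $2$ internal vertices absorbing the $5$ colours), giving $trc(G)\le 5$. For the lower bound I would adapt the partition-refinement counting from the proof of Theorem \ref{strc(Kmn)}: a total-colouring with $b$ colours induces refining partitions of $V_t$ whose last member has size at most $(b-1)^m$, with two vertices in a common cell admitting no total-rainbow length-$2$ geodesic. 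When $b\le 4$ no total-rainbow path of length $3$ or more is available (such a path needs at least $5$ colours), so taking $b=\lceil\sqrt[m]{n}\rceil$ (when this is at most $4$) or $b=4$ (when $\lceil\sqrt[m]{n}\rceil\ge 5$, whence $n>4^m>3^m$) and using $(b-1)^m<n$ forces a repeated cell and hence failure; this yields $trc(G)\ge\lceil\sqrt[m]{n}\rceil+1$ and $trc(G)\ge 5$ respectively, so that $trc(G)=\min(\lceil\sqrt[m]{n}\rceil+1,5)$.

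The main obstacle will be the two upper-bound caps in the case $m\le n$. Establishing $rc(G)\le 3$ and $trc(G)\le 5$ requires colourings under which the length-$3$ paths $w$-$u$-$u'$-$x$ serve all same-class pairs at once, and consistently across all $t$ classes; this is precisely where the edges between distinct outside classes must be exploited, absent in the bipartite setting, where Theorem \ref{Kmnthm} shows the analogous caps rise to $4$ and $7$, and producing a single colouring that works for every pair is the delicate point. The refined length-$2$ counting for the $trc$ lower bound is the other technical core, which I would import essentially verbatim from the argument already given for $strc(K_{m,n})$.
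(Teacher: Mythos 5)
First, an important point of reference: the paper gives no proof of Theorem \ref{mpthm} at all --- it is quoted from \cite{CJMZ} (the $rc$ and $src$ rows) and \cite{LiuMS} (the $trc$ row) --- so your attempt can only be judged on its own merits and against the cited literature. Your lower bounds are correct and are the standard ones: for two vertices of the same class every geodesic is a length-$2$ path through an outside vertex, so colour-vector counting gives $src(G)\ge\lceil\sqrt[m]{n}\rceil$; a rainbow path on $2$ colours has length at most $2$, so $rc(G)\ge3$ once $n>2^m$; a total-rainbow path of length $\ell\ge3$ needs $2\ell-1\ge5$ colours, which combined with the partition-refinement argument imported from Theorem \ref{strc(Kmn)} (applied with $b=\lceil\sqrt[m]{n}\rceil\le 4$, resp.\ $b=4$) correctly yields $trc(G)\ge\min(\lceil\sqrt[m]{n}\rceil+1,5)$; and the $n=1$ column plus the implication $rc(G)=2\Rightarrow trc(G)=3$ via Theorem \ref{thm1}(c) are fine. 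The gaps are in the explicit colourings.

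The first gap is the column $n\ge2$, $m>n$ (and the same issue infects ``handles the smaller classes analogously'' in your $src$ upper bound for $m\le n$). You infer the existence of a $2$-colouring with pairwise distinct within-class vectors from the count $2^{\,m+n-n_j}>n_j$. That inference is invalid as stated: the colour of an edge $wu$ is simultaneously a coordinate of $w$'s vector and of $u$'s vector, and these vertices lie in different classes, so the vectors of the various classes cannot be assigned independently; a per-class counting bound does not produce a colouring. A construction must be exhibited. One clean option: since $m>n$, no class contains more than half of the $m+n\ge5$ vertices, so the vertices can be placed in a cyclic order with consecutive vertices in distinct classes; colour edges between cyclically consecutive vertices $1$ and all other edges $2$. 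Any two same-class vertices $v_a,v_{a'}$ then have a rainbow length-$2$ geodesic through $v_{a+1}$ (or through $v_{a'+1}$ when $a'=a+2$), which proves $src(G)\le2$ for all classes at once.

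The second gap is the cap $trc(G)\le5$ when $m\le n$. Your plan --- a $5$-total-colouring in which \emph{every} same-class pair is joined by a total-rainbow length-$3$ path --- fails in its natural implementation. For $t=3$, pairs inside each of the three classes need the two internal vertices (which lie in the other two classes) to receive distinct colours, also distinct from the three edge colours on the path; with class-monochromatic colours this forces three edge colours plus three pairwise distinct vertex colours, i.e.\ six colours. The working construction is asymmetric and mixes path lengths: fix $u^*\in V_1$ and $u^{**}\in V_2$, set $c(u^*)=4$, $c(u^{**})=5$, $c(u^*u^{**})=3$, and colour every edge from $u^*$ (resp.\ $u^{**}$) to $V_t$ with $1$ (resp.\ $2$); then each pair $w,x\in V_t$ has the total-rainbow path $wu^*u^{**}x$ with colours $1,4,3,5,2$ (legitimate for $trc$, which does not require geodesics), while the pairs inside each small class $V_i$, $i<t$, are served by total-rainbow \emph{length-$2$} paths through $V_t$, using $n\ge m$ to give each vertex of $V_i$ a privately coloured edge into $V_t$. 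Note the obstruction is specific to $trc$: for $rc(G)\le3$ your all-length-$3$ plan is perfectly fine, since edge-coloured paths carry no internal vertex colours.
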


Here, we determine the function $strc(K_{n_1,\ldots,n_t})$ for $t\ge 3$.

\begin{thm}\label{strc(Kn1nt)}
%Let $t\geq 3$, $1\leq n_1\leq\cdots\leq n_t$, $m=\sum\limits_{i=1}^{t-1}n_i$ and $n_t=n$. Then,
Let $t\ge 3$, $1\le n_1\le\cdots\le n_t$, $m=\sum_{i=1}^{t-1}n_i$ and $n_t=n$. Then,
\[
strc(K_{n_1,\dots,n_t})=
\left\{
\begin{array}{l@{\quad\quad}l}
1 & \textup{\emph{if} }n=1\textup{\emph{,}}\\
3 & \textup{\emph{if} }n\ge 2\textup{\emph{ and }}m>n\textup{\emph{,}}\\
\lceil\!\sqrt[m]{n}\,\rceil+1 & \textup{\emph{if} }m\le n.
\end{array}
\right.
\]
\end{thm}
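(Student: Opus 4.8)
The plan is to treat the three regimes of the statement separately, in each case matching an upper bound obtained from Proposition~\ref{diam=2} against a lower bound. The starting observation is that for $t\ge 3$ the graph $G=K_{n_1,\dots,n_t}$ has diameter at most $2$: two vertices in different classes are adjacent, and two vertices in the same class share a common neighbour in any third class. When $n=1$ all classes are singletons, so $G=K_t$ is complete and $strc(G)=1$ by Theorem~\ref{thm1}(a). In the remaining two cases $n\ge 2$ (note $m\ge t-1\ge 2$, so $m\le n$ forces $n\ge 2$ as well), hence $G$ is not complete and $\diam(G)=2$ exactly, whence Proposition~\ref{diam=2} yields $strc(G)\le src(G)+1$ with the value of $src(G)$ read off from Theorem~\ref{mpthm}.

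For $n\ge 2$ and $m>n$ this gives $strc(G)\le 2+1=3$, and the matching lower bound is immediate: since $G$ is not complete, Theorem~\ref{thm1}(b) together with Theorem~\ref{mpthm} gives $strc(G)\ge trc(G)=3$, so $strc(G)=3$. The real work is therefore the lower bound $strc(G)\ge\lceil\sqrt[m]{n}\rceil+1$ when $m\le n$ (the upper bound again being $src(G)+1=\lceil\sqrt[m]{n}\rceil+1$). The key step is a reduction to the bipartite situation of Theorem~\ref{strc(Kmn)}. Let $V_t$ be the largest class, with $|V_t|=n$, and let $u_1,\dots,u_m$ enumerate the remaining $m=\sum_{i<t}n_i$ vertices. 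For any $w,x\in V_t$ we have $d(w,x)=2$, and every $w$-$x$ geodesic is a path $wu_ix$ of length $2$: its single internal vertex must be a common neighbour of $w$ and $x$, and since $w,x\in V_t$ the common neighbours are exactly the vertices outside $V_t$, namely $u_1,\dots,u_m$. The edges present among the $u_i$ in the multipartite graph play no role, as they never lie on such a path. Hence the $m$ paths $wu_1x,\dots,wu_mx$ are \emph{all} the $w$-$x$ geodesics, exactly as in $K_{m,n}$.

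Given this reduction, I would run the refining-partition argument of Theorem~\ref{strc(Kmn)} verbatim on $V_t$. Starting from any total-colouring $c$ of $G$ with $b=\lceil\sqrt[m]{n}\rceil$ colours, assign to each $v\in V_t$ the vector $\vec v$ with $\vec v_i=c(u_iv)$, and build refining partitions $\mathcal P_0\prec\cdots\prec\mathcal P_m$ of $V_t$ with $|\mathcal P_i|\le(b-1)^i$, where at step $i$ two vertices are kept together precisely when the path through $u_i$ fails to be total-rainbow (either the two edges to $u_i$ agree in colour, or one of them matches the colour of the internal vertex $u_i$). Since $n>(b-1)^m\ge|\mathcal P_m|$, some part of $\mathcal P_m$ contains two vertices $w,x$, and by refinement $w,x$ lie in a common part of every $\mathcal P_i$; hence no $wu_ix$ is total-rainbow and there is no total-rainbow $w$-$x$ geodesic. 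Thus $c$ is not strongly total rainbow connected, giving $strc(G)\ge b+1$. The small boundary range $m\le n\le 2^m$, where $b=2$, is instead covered by non-completeness: $strc(G)\ge trc(G)\ge 3=b+1$.

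The main obstacle is to see and justify cleanly that, despite the extra edges among $u_1,\dots,u_m$ in the multipartite graph, the geodesics between two vertices of the largest class are exactly those of the induced complete bipartite subgraph on $(V_t,\{u_1,\dots,u_m\})$. Once this is observed, the bipartite machinery of Theorem~\ref{strc(Kmn)} transfers without change, and no genuinely new combinatorial estimate is needed.
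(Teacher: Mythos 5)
Your proposal is correct and follows essentially the same route as the paper: the $n=1$ and $m>n$ cases via completeness and Proposition~\ref{diam=2}, and for $m\le n$ the upper bound $src(G)+1$ from Proposition~\ref{diam=2} together with a lower bound obtained by restricting any $b$-colouring to the spanning complete bipartite subgraph on $(V_1\cup\cdots\cup V_{t-1},V_t)$ and rerunning the refining-partition argument of Theorem~\ref{strc(Kmn)}, noting that all $w$--$x$ geodesics for $w,x\in V_t$ are the length-$2$ paths through vertices outside $V_t$. The paper's proof makes exactly this reduction (including the boundary case $m\le n\le 2^m$ handled by $strc(G)\ge 3$), so there is no substantive difference.
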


\begin{proof}
Write $G$ for $K_{n_1,\dots,n_t}$, and let $V_i$ be the $i$th class (with $n_i$ vertices) for $1\le i\le t$. If $n=1$, then $G=K_t$ and $strc(G)=1$. Now for $n\ge 2$, we have $strc(G)\ge 3$. For the case $n\ge 2$ and $m>n$, we have  $src(G)=2$ by Theorem \ref{mpthm}. Since $\diam(G)=2$, by Proposition \ref{diam=2}, we have $strc(G)\le src(G)+1=3$. Thus, $strc(G)=3$.

Now, let $m\le n$. For this case, we have $src(G)= \lceil\!\sqrt[m]{n}\,\rceil$ by Theorem \ref{mpthm}. Again by Proposition \ref{diam=2}, we have the upper bound $strc(G)\le src(G)+1=\lceil\!\sqrt[m]{n}\,\rceil+1$. It remains to prove the lower bound $strc(G)\ge\lceil\!\sqrt[m]{n}\,\rceil+1$. Let $b=\lceil\!\sqrt[m]{n}\,\rceil\ge 2$. If $m\le n\le 2^m$, then $strc(G)\ge 3=b+1$. Now let $n>2^m$, so that $b\ge 3$. We have $(b-1)^m<n\le b^m$. Suppose that we have a total-colouring $c$ of $G$, using at most $b$ colours. Note that $K_{m,n}$ is a spanning subgraph of $G$ with classes $U=V_1\cup\cdots\cup V_{t-1}$ and $V_t$. We can restrict the total-colouring $c$ to $K_{m,n}$ and apply the same argument involving the refining partitions as in Theorem \ref{strc(Kmn)}. We have vertices $w,x\in V_t$ such that all of the paths $wux$, for $u\in U$, are not total-rainbow. Since these paths are all the possible $w-x$ geodesics in $G$ (of length $2$), it follows that there does not exist a total-rainbow $w-x$ geodesic in $G$. Therefore, $c$ is not a strongly total rainbow connected colouring of $G$, and $strc(G)\ge b+1$.
\end{proof}

\section{Comparing the rainbow connection numbers}\label{comparesect}

Our aim in this section is to compare the various rainbow connection parameters. In \cite{KY}, Krivelevich and Yuster observed that for $rc(G)$ and $rvc(G)$, we cannot generally find an upper bound for one of the parameters in terms of the other. Indeed, let $s\ge 2$. By taking $G=K_{1,s}$, we have $rc(G) = s$ and $rvc(G) = 1$. On the other hand, let the graph $G_s$ be constructed as follows. Take $s$ vertex-disjoint triangles and, by designating a vertex from each triangle, add a complete graph $K_s$ on the designated vertices. Then $rc(G_s) \le 4$ and $rvc(G_s) = s$.

We may consider the analogous situation for the parameters $src(G)$ and $srvc(G)$. Again by taking $G=K_{1,s}$, we see that $src(G)=s$ and $srvc(G)=1$, so that $src(G)$ can be arbitrarily larger than $srvc(G)$. Rather surprisingly, unlike the situation for the functions $rvc(G)$ and $rc(G)$, we are uncertain if $srvc(G)$ can also be arbitrarily larger than $src(G)$. We propose the following problem.

\begin{prob}\label{srvc>srcprob}
Does there exist an infinite family of connected graphs $\mathcal F$ such that, $src(G)$ is bounded on $\mathcal F$, while $srvc(G)$ is unbounded?
\end{prob}

When considering the total rainbow connection number in addition, we have the following trivial inequalities.
\begin{align}
trc(G) & \ge\max(rc(G),rvc(G)),\label{maxeq1}\\
strc(G) & \ge\max(src(G),srvc(G)).\label{maxeq2}
\end{align}

In \cite{LiuMS}, Liu et al.~considered how close and how far apart the terms in the inequality (\ref{maxeq1}) can be. They observed that by considering Krivelevich and Yuster's construction as described above, we have $trc(G_s)=rvc(G_s)=s$ for $s\ge 13$. Also, as mentioned in the remark after the proof of Theorem \ref{thm1}, if $G=(C_5)_{u\to K}$ is a graph obtained by expanding a vertex $u$ of the cycle $C_5$ into a clique $K$, then we have $trc(G)=rc(G)=3$. Thus, $trc(G)$ can be equal to each of $rvc(G)$ and $rc(G)$ for infinitely many graphs $G$. On the other hand, Liu et al.~also remarked that, given $1 \le t < s$, there exists a graph $G$ such that $trc(G) \ge s$ and $rvc(G) = t$. Indeed, we can let $G=B_{s,t}$ be the graph obtained by taking the star $K_{1,s}$ and identifying the centre with one end-vertex of the path of length $t$ (this graph $B_{s,t}$ is a \emph{broom}). Also, for $s\ge 13$, we can again consider the graphs $G_s$ and obtain $trc(G_s)=s$ and $rc(G_s)\le 4$. Thus, $trc(G)$ can also be arbitrarily larger than each of $rvc(G)$ and $rc(G)$. For the difference between the terms $trc(G)$ and $\max(rc(G),rvc(G))$, one can consider $G$ to be the path of length $s$, and obtain $trc(G)=2s-1$ and $\max(rc(G),rvc(G))=s$, so that $trc(G)-\max(rc(G),rvc(G))=s-1$ can be arbitrarily large. However, for this simple example, the term $\max(rc(G),rvc(G))$ is unbounded in $s$. In the final problem in \cite{LiuMS}, Liu et al.~asked the question of whether there exists an infinite family of connected graphs $\mathcal F$ such that, $\max(rc(G),rvc(G))$ is bounded on $\mathcal F$, while $trc(G)$ is unbounded. This open problem appears to be much more challenging.

Here, we consider the analogous situations for the terms in the inequality (\ref{maxeq2}). From the previous remarks and results, we can easily obtain the following.

\begin{thm}\label{comparisonlthm}
\indent\\[-3ex]
\begin{enumerate}
\item[(a)] There exist infinitely many graphs $G$ with $strc(G)=src(G)=3$.
\item[(b)] Given $s\ge 13$, there exists a graph $G$ with $strc(G)=srvc(G)=s$.
\item[(c)] Given $1 \le t < s$, there exists a graph $G$ such that $strc(G) \ge s$ and $srvc(G) = t$.
\end{enumerate}
\end{thm}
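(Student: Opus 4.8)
The plan is to prove the three parts with three explicit families, each reducing to results already in hand. For (a) I would take $G=(C_5)_{u\to K}$, the graph obtained by expanding a vertex $u$ of $C_5$ into a clique $K$ of arbitrary size, which gives infinitely many graphs. By the remark following Theorem \ref{thm1}, every such $G$ has $trc(G)=rc(G)=3$, so Theorem \ref{thm1}(c)(iv) immediately yields $strc(G)=3$. It then remains to check $src(G)=3$: the lower bound is $src(G)\ge rc(G)=3$, and for the upper bound I would exhibit a strongly rainbow connected $3$-colouring directly. Writing the cycle positions as $K,v_2,v_3,v_4,v_5$ (with $K$ adjacent to $v_2$ and $v_5$), one colours the path edges $v_2v_3,v_3v_4,v_4v_5$ by $1,2,3$, all edges from $K$ to $v_2$ by $2$, all edges from $K$ to $v_5$ by $1$, and the edges inside $K$ arbitrarily. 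Since $\diam(G)=2$, verifying that each of the few length-$2$ geodesic types is rainbow is routine, giving $src(G)=3=strc(G)$.

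For (c) I would use the broom $B_{s,t}$, namely the star $K_{1,s}$ with its centre identified with an endpoint of a path of length $t$; this is a tree on $n=s+t+1$ vertices whose non-pendent vertices are the centre together with the $t-1$ internal path vertices, so $q=t$. Proposition \ref{strc(T)} then gives at once $srvc(B_{s,t})=q=t$ and $strc(B_{s,t})=n-1+q=s+2t\ge s$, which is exactly what is required whenever $1\le t<s$.

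Part (b) is the substantial one, and here the lower bound is free: by \eqref{maxeq2}, any graph with $srvc(G)=s$ automatically satisfies $strc(G)\ge srvc(G)=s$. So the whole task is to produce, for each $s\ge 13$, a graph with $srvc(G)=s$ that \emph{also} admits a strongly total rainbow connected total-colouring using only $s$ colours, which then forces $strc(G)\le s$ and hence equality. The plan is two-layered. First I would pin down the vertex layer by designing a graph in which exactly $s$ vertices can ever be internal on a geodesic and every two of them co-occur on some geodesic; these $s$ vertices are then forced to receive pairwise distinct colours, and assigning them $1,\dots,s$ (colouring the rest freely) is strongly rainbow vertex-connected, so $srvc=s$. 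Second, and this is the crux, I would colour the edges reusing only $1,\dots,s$ so that every geodesic becomes total-rainbow, and conclude by a finite check over the geodesic types.

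The main obstacle is precisely this last colouring step, and it is where the naive Krivelevich--Yuster gadget fails. If the $s$ forced vertices are the designated vertices of $s$ vertex-disjoint triangles joined into a clique (the graph $G_s$), then the edge joining each triangle to its designated vertex lies on the \emph{unique} length-$3$ geodesic to every other triangle, so it must avoid all $s$ vertex-colours simultaneously; moreover any two such edges share a common geodesic and so must differ, which drives $strc(G_s)$ far above $s$. The construction for (b) must therefore balance two competing demands: the geodesics must be rigid enough to force $s$ distinct internal-vertex colours (keeping $srvc$ high), yet flexible enough---through alternative geodesics or a carefully localised gadget---that no single edge is constrained away from the whole palette (keeping $strc$ down to $s$), with the hypothesis $s\ge 13$ supplying the slack needed for this balance. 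Identifying a gadget that achieves both is the hard part; once it is fixed, exhibiting the explicit $s$-total-colouring and checking total-rainbowness of each geodesic type is a finite verification.
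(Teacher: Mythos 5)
Parts (a) and (c) of your proposal are correct and essentially the paper's own argument. For (a) you use the same graph $(C_5)_{u\to K}$ and the same deduction $strc(G)=3$ via Theorem \ref{thm1}(c); your explicit strong rainbow $3$-colouring is fine but unnecessary, since once $strc(G)=3$ is known, inequality (\ref{maxeq2}) gives $src(G)\le strc(G)=3$ while (\ref{ineq1}) gives $src(G)\ge rc(G)=3$ --- this is exactly how the paper closes part (a) without any further colouring. For (c), your broom computation ($n=s+t+1$, $q=t$, then Proposition \ref{strc(T)} yields $srvc(B_{s,t})=t$ and $strc(B_{s,t})=s+2t\ge s$) is the paper's argument verbatim.

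Part (b), however, contains a genuine gap: you never exhibit the graph. Your two observations are sound --- the lower bound $strc(G)\ge srvc(G)$ is indeed free from (\ref{maxeq2}), and your diagnosis of why the Krivelevich--Yuster graph $G_s$ fails is correct (each edge $u_iv_i$ lies on the unique $v_i$--$v_j$ geodesic $v_iu_iu_jv_j$ for every $j$, so it must avoid all $s$ distinct $u$-colours, and any two such edges must also differ, forcing $strc(G_s)\ge 2s$). But diagnosing the obstruction is not the same as overcoming it, and your proposal explicitly defers the construction (``identifying a gadget that achieves both is the hard part''). That gadget is the entire content of (b). The paper fills precisely this hole by invoking the construction of Lei et al.~\cite{LLMS}: the graph $H_s$ obtained from $G_s$ by adding new vertices $z_1,\dots,z_s$ and the edges $u_iz_i$, $u_{i+1}z_i$, $v_iz_i$, $w_iz_{i+4}$ (indices modulo $s$), for which \cite{LLMS} proves $strc(H_s)=srvc(H_s)=s$ when $s\ge 13$. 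The added vertices $z_i$ create the alternative geodesics you call for, so that no single edge is forced away from the whole palette while the $s$ internal-vertex colours remain forced; this realises the ``balance'' you describe, but without it (or some substitute construction, or a citation playing its role) your proposal establishes only parts (a) and (c).
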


\begin{proof}
(a) Let $G=(C_5)_{u\to K}$ as described earlier. We have $trc(G)=rc(G)=3$. By Theorem \ref{thm1}(c), we have $strc(G)=3$. Therefore by (\ref{ineq1}) and (\ref{maxeq2}), we have $3=strc(G)\ge src(G)\ge rc(G)=3$, so that $strc(G)=src(G)=3$.\\[1ex]
\indent (b) We use the following construction which was given by Lei et al.~\cite{LLMS}. For $s\ge 13$, let $H_s$ be the graph as follows. First, we take the graph $G_s$ from before, where $u_1,\dots, u_s$ are the vertices of the $K_s$, and the remaining vertices are $v_i, w_i$, where $u_iv_iw_i$ is a triangle,  for $1\le i\le s$. We then add new vertices $z_1,\dots,z_s$, and connect the edges $u_iz_i, u_{i+1}z_i, v_iz_i, w_iz_{i+4}$, for $1\le i\le s$, where all indices are taken modulo $s$. In \cite{LLMS}, Lei et al.~proved that $strc(H_s)=srvc(H_s)=s$.\\[1ex]
\indent(c) Since the broom $G=B_{s,t}$ as described earlier is a tree, it is clear that $strc(G)=trc(G)\ge s$ and $srvc(G) =rvc(G)= t$.
\end{proof}

As before, if $G$ is the path of length $s$, then we have $strc(G)-src(G)=strc(G)-\max(src(G),srvc(G))=s-1$, so that the two  differences can both be arbitrarily large. But the terms $src(G)$ and $\max(src(G),srvc(G))$ are unbounded in $s$. Similar to the question of Liu et al.~in \cite{LiuMS} and Problem \ref{srvc>srcprob}, we may ask the following question.

\begin{prob}
Does there exist an infinite family of connected graphs $\mathcal F$ such that, $src(G)$ is bounded on $\mathcal F$, while $strc(G)$ is unbounded? Similarly, does there exist an infinite family of connected graphs $\mathcal F$ such that, $\max(src(G),srvc(G))$ is bounded on $\mathcal F$, while $strc(G)$ is unbounded?
\end{prob}

Now, we proceed to the final part of this section. Recall that the following inequalities hold for a connected graph $G$.
\[
rc(G)\le src(G),\quad rvc(G)\le srvc(G),\quad\textup{and}\quad trc(G)\le strc(G).
\]
Chartrand et al.~\cite{CJMZ} considered the following question: \emph{Given positive integers $a\le b$, does there exist a graph $G$ such that $rc(G)=a$ and $src(G)=b$?} They gave positive answers for $a=b$, and $3\le a<b$ with $b\ge\frac{5a-6}{3}$. Chern and Li \cite{CL} then improved this result as follows.

\begin{thm}\label{CLthm}\textup{\cite{CL}}\,
Let $a$ and $b$ be positive integers. Then there exists a connected graph $G$ such that $rc(G) = a$ and $src(G) = b$ if and only if $a = b \in \{1, 2\}$ or $3 \le a \le b$.
\end{thm}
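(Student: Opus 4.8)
The plan is to prove the two directions separately: necessity follows quickly from the inequality chain (\ref{ineq1}) and the small-value equivalences of Theorem \ref{thm1}, while sufficiency requires explicit constructions, split into the diagonal case $a=b$ and the strict case $a<b$. For necessity, suppose a connected graph $G$ has $rc(G)=a$ and $src(G)=b$. The first line of (\ref{ineq1}) gives $a=rc(G)\le src(G)=b$. If $a=1$, then by Theorem \ref{thm1}(a) the graph $G$ is complete, so $b=src(G)=1$; and if $a=2$, then Theorem \ref{thm1}(c)(i) gives $b=src(G)=2$. Symmetrically, $b=1$ forces $a=1$ and $b=2$ forces $a=2$. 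Hence whenever $\min(a,b)\le 2$ we must have $a=b\in\{1,2\}$, and in every other case $a\ge 3$ with $a\le b$. These are exactly the admissible pairs, so no others can occur.

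For sufficiency on the diagonal, I note that any two vertices of a tree are joined by a unique path, so every tree satisfies $rc=src$; in particular the path of length $a$ has $rc=src=a$ for all $a\ge 1$ (the length-$1$ path is $K_2$, realising $a=b=1$, and the length-$2$ path realises $a=b=2$). This settles every pair with $a=b$. For the strict case $3\le a<b$, the small values of $a$ are handled directly by the multipartite gadgets already computed in this paper. By Theorem \ref{mpthm}, choosing $n$ with $\lceil\!\sqrt{n}\,\rceil=b$, the graph $K_{1,1,n}$ has $src=\lceil\!\sqrt{n}\,\rceil=b$ and $rc=\min(\lceil\!\sqrt{n}\,\rceil,3)=3$, realising $a=3$ with every $b\ge 4$; and by Theorem \ref{Kmnthm}, the same $n$ in $K_{2,n}$ gives $src=b$ and $rc=\min(\lceil\!\sqrt{n}\,\rceil,4)=4$, realising $a=4$ with every $b\ge 5$.

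For $a\ge 5$ these gadgets no longer suffice, since $rc$ of a complete bipartite or multipartite graph never exceeds $4$; one is forced to raise the diameter. Here I would seek a graph $G$ of diameter exactly $a$ (so that $rc(G),src(G)\ge a$ by (\ref{ineq1})) that contains a $K_{2,N}$-type blow-up whose rigid length-$2$ geodesics force, via the refining-partition argument of Theorem \ref{strc(Kmn)}, a total of $b=\lceil\!\sqrt{N}\,\rceil$ colours for strong connection, yet admits an $a$-colouring that rainbow-connects every pair through suitable detours. A natural first attempt is to append a path of length $a-2$ to the blow-up: the path sets the diameter, pinning the lower bound $rc,src\ge a$, while tuning $N$ fixes $src=b$. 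One would then verify $src(G)=b$ by the pigeonhole/refining-partition bound on the gadget's geodesics, and $rc(G)=a$ by exhibiting an explicit $a$-colouring.

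The main obstacle is to make $rc$ equal $a$ \emph{exactly} rather than exceed it. Appending a long pendant forces each vertex at the far end to reach the blow-up along a unique rigid segment, so keeping those long paths rainbow without spending more than $a$ colours — while the blow-up already demands $b>a$ colours for strong connection — pulls the two parameters in opposite directions, and the naive pendant-path attempt may well overshoot $rc$. Calibrating a gadget (likely one whose far vertices reach the blow-up cheaply, rather than through a single forced segment) and its attachment so that the equalities $rc(G)=a$ and $src(G)=b$ hold simultaneously across the \emph{entire} range $3\le a<b$ — and not merely for $b\ge\frac{5a-6}{3}$, the range reached by the original argument of Chartrand et al.~\cite{CJMZ} — is precisely the content of \cite{CL} and the step I expect to require the most care.
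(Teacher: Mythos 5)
Your necessity argument and the easy sufficiency cases are fine: (\ref{ineq1}) plus Theorem \ref{thm1}(a) and (c)(i) correctly pin down the admissible pairs; paths (indeed any tree with $a$ edges) settle the diagonal; and by Theorems \ref{mpthm} and \ref{Kmnthm}, taking $n=b^2$ in $K_{1,1,n}$ and $K_{2,n}$ does realise $(a,b)=(3,b)$ for $b\ge 4$ and $(a,b)=(4,b)$ for $b\ge 5$. Note, however, that the paper itself never proves Theorem \ref{CLthm}; it is quoted from Chern and Li \cite{CL}, so your attempt must be judged as a self-contained proof.

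As such, it has a genuine gap: for $5\le a<b$ you construct nothing. What you give is a plan (append a path of length $a-2$ to a $K_{2,N}$-type blow-up) followed by your own accurate diagnosis of why it is not yet a proof: the pendant path forces every far vertex to reach the blow-up through a rigid segment of bridges, whose colours must be pairwise distinct and must also mesh with the colours met on forced geodesics inside the blow-up, so the natural bounds such a gadget yields are of the form $rc(G)\le a+O(1)$ and $src(G)=(a-2)+\lceil\sqrt{N}\,\rceil+O(1)$, rather than the exact equalities $rc(G)=a$ and $src(G)=b$; in particular, pairs with $b$ close to $a$ (say $b=a+1$) are precisely where an additive-error construction breaks down. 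This missing range is not a corner case but the entire content of the theorem: as the paper notes, Chartrand et al.~\cite{CJMZ} had already handled $a=b$ and $3\le a<b$ with $b\ge\frac{5a-6}{3}$, and what \cite{CL} contributes is exactly the pairs below that threshold, e.g.~$(a,b)=(5,6)$. Since your $a=3,4$ gadgets cannot be pushed further ($rc$ of complete bipartite and multipartite graphs is capped at $4$, as you observe), the theorem remains unproved for all $a\ge 5$ with $a<b$, and the calibration step you defer is the one that actually requires a new idea.
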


Theorem \ref{CLthm} was an open problem of Chartrand et al., and it completely characterises all possible pairs $a$ and $b$ for the above question. Subsequently, Li et al.~\cite{LMS} studied the rainbow vertex-connection analogue, and they proved the following result.

\begin{thm}\label{LiMaoShithm}\textup{\cite{LMS}}\,
Let $a$ and $b$ be integers with $a \ge 5$ and $b \ge \frac{7a - 8}{5}$. Then there exists a connected graph $G$ such that $rvc(G) = a$ and $srvc(G) = b$.
\end{thm}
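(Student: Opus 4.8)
The plan is to exhibit, for every admissible pair $(a,b)$, an explicit connected graph $G=G_{a,b}$ satisfying $rvc(G)=a$ and $srvc(G)=b$, and to establish each equality by matching upper and lower bounds. The design principle is to assemble $G$ from two loosely interacting parts: a \emph{spine} that fixes $\diam(G)=a+1$, and a family of \emph{gadgets} whose role is to force any strongly rainbow vertex-connected colouring to spend $b$ colours while leaving $rvc(G)$ untouched at $a$. The sizes and number of the gadgets are the free parameters used to dial $b$ upward, and the hypothesis $b\ge\frac{7a-8}{5}$ should appear as the price of the construction: it is the smallest value of $b$ the scheme can realise for a given $a$, much as Chartrand et al.'s partial result $b\ge\frac{5a-6}{3}$ for the pair $(rc,src)$ (later completed by Chern and Li in Theorem \ref{CLthm}) reflected the limits of their analogous scheme.

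First I would pin down $rvc(G)$. The lower bound is immediate from the spine: by (\ref{ineq2}), $rvc(G)\ge\diam(G)-1=a$. The real content is the matching upper bound $rvc(G)\le a$, for which I would give an explicit vertex-colouring with $a$ colours and verify that every pair of vertices is joined by a \emph{vertex-rainbow path}, crucially allowing these paths to be non-geodesic. The gadgets must therefore be arranged so that, although each creates long and essentially forced geodesics, every pair of vertices also admits a short detour colourable within the $a$ colours governed by the spine.

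Next I would pin down $srvc(G)$. For the lower bound $srvc(G)\ge b$, I would isolate a collection of vertex pairs whose geodesics are forced (unique, or sharing a common forced segment), and argue that across these pairs the internal vertices must receive pairwise distinct colours in any strongly rainbow vertex-connected colouring; counting them yields at least $b$ colours. For the upper bound $srvc(G)\le b$, I would exhibit a colouring with $b$ colours and check, spine and gadget by gadget, that every pair of vertices has a vertex-rainbow geodesic.

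The main obstacle is the tension between these two halves on a \emph{single} graph: the gadgets must simultaneously force many pairwise-distinct colours along geodesics (driving $srvc$ up to $b$) yet remain bypassable by short, flexibly colourable non-shortest paths (holding $rvc$ down at $a$). Reconciling these opposing demands is the crux, and it is precisely the accounting of how cheaply a gadget can enlarge the gap $b-a$ without disturbing the $a$-colour rainbow vertex-connected colouring that should yield the threshold $b\ge\frac{7a-8}{5}$. A secondary difficulty is the verification near the boundary---when $b$ is close to $\frac{7a-8}{5}$, or for the smallest admissible $a$---where one must check that the constructed graph genuinely attains $rvc(G)=a$ and $srvc(G)=b$ rather than collapsing to smaller values, using the relevant inequalities and the small-value equivalences of Theorem \ref{thm1}(c).
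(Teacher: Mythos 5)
Your proposal never leaves the planning stage, and that is the essential gap: the theorem is a pure existence statement, so its entire content is a concrete graph together with four verified bounds, and you supply none of them --- no graph, no colouring witnessing $rvc(G)\le a$, no colouring witnessing $srvc(G)\le b$, and no counting argument for $srvc(G)\ge b$. Every step that carries mathematical weight is deferred (``I would give an explicit vertex-colouring'', ``the accounting \ldots should yield the threshold''), so nothing can be checked or completed. For comparison, note that this statement is quoted in the paper from \cite{LMS}; the paper's own proof of a strengthening of it is Lemma \ref{rvcpreslem2}, whose graph $F_{a,b}$ has a spine $u_0\cdots u_{a-3}$ plus a hub $u$, and $n=2(b-1)(b-a+2)$ vertices $v_i$, each joined to the hub through $w_i$ and to the spine's end through $x_i$, with perfect matchings inside the sets $V$, $W$, $X$. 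Since $a\ge 5$ and $b\ge\frac{7a-8}{5}$ force $4\le a\le b$, that lemma covers exactly the range you need. Note also that your proposed lower-bound mechanism --- forced geodesics whose internal vertices must receive pairwise distinct colours --- is not the one that works there. In $F_{a,b}$, an unmatched pair $v_1,v_\ell$ has exactly two geodesics, one through $u$ and one through $u_{a-3}$, so no single geodesic is forced; the bound $srvc(F_{a,b})\ge b$ comes instead from a two-coordinate pigeonhole: with at most $b-1$ colours, the colour pairs $(f(x_i),f(w_i))$ split $V$ into at most $(b-a+2)(b-2)$ classes, some class contains three vertices, two of which are non-adjacent, and then \emph{both} of their geodesics fail to be vertex-rainbow. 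Your single-coordinate mechanism is essentially the one used for $a=3$ in Lemma \ref{rvcpreslem1}, where a clique hub keeps the diameter small; it does not obviously coexist with a spine of diameter $a+1$.

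Your guiding principle is also aimed at the wrong target. You want the gadget accounting to make $b\ge\frac{7a-8}{5}$ ``appear as the price of the construction''. But that bound is not intrinsic to the problem; it is an artifact of the particular construction in \cite{LMS}, precisely as $b\ge\frac{5a-6}{3}$ was an artifact of Chartrand et al.'s construction for the pair $(rc,src)$ before Theorem \ref{CLthm}, and Theorem \ref{rvcpresthm} of this very paper shows the conclusion holds for all $3\le a\le b$. So a correct proof should not try to engineer a scheme whose breakdown point is exactly $\frac{7a-8}{5}$ --- nothing in your sketch suggests such a scheme exists, nor is one needed. It should exhibit one construction realising every pair in a range containing $\left\{(a,b): a\ge 5,\ b\ge\frac{7a-8}{5}\right\}$, and then simply observe that the hypotheses place $(a,b)$ in that range.
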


Here, we will improve Theorem \ref{LiMaoShithm}, and also study the total rainbow connection version of the problem. We will prove Theorems \ref{rvcpresthm} and \ref{trcpresthm} below, where we will completely characterise all pairs of positive integers $a$ and $b$ such that, there exists a graph $G$ with $rvc(G) = a$ and $srvc(G) = b$ (resp.~$trc(G) = a$ and $strc(G) = b$).

%\textcolor{red}{Remark: Modifications of the wheel graphs were mostly considered for the previous constructions. But here, we will instead consider modifications of compete multipartite graphs. In some sense, both are candidates since $rc$ bounded, but $src$ unbounded.}

\begin{thm}\label{rvcpresthm}
Let $a$ and $b$ be positive integers. Then there exists a connected graph $G$ such that $rvc(G) = a$ and $srvc(G) = b$ if and only if $a = b \in \{1,2\}$ or $3 \le a \le b$.
\end{thm}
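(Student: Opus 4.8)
The plan is to prove both directions of the biconditional. The \emph{necessity} direction is quick: by Theorem \ref{thm1}(a), $rvc(G)=0$ iff $srvc(G)=0$ iff $G$ is complete, so neither can be $0$ unless both are; by part (c)(ii), $rvc(G)=1$ iff $srvc(G)=1$; and since $rvc(G)\le srvc(G)$ always holds by (\ref{ineq2}), the only surviving possibilities are $a=b\in\{1,2\}$ (using (c)(ii) and (c)(iii) to force equality when $a\le 2$) or $3\le a\le b$. So the real content is the \emph{sufficiency} direction: for every such pair I must construct a connected graph realising $rvc(G)=a$ and $srvc(G)=b$.

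For the \emph{construction}, I would split into the easy diagonal cases and the main case $3\le a\le b$. When $a=b\in\{1,2\}$, small explicit examples suffice: a path $P_3$ (diameter $2$) gives $rvc=srvc=1$ by (c)(ii), and a suitable short path or cycle (e.g.\ $C_4$ or $P_4$) handles $a=b=2$ via (c)(iii). The interesting regime is $3\le a\le b$, and here the natural strategy is to build a graph whose long geodesics force many internal-vertex colours for the \emph{strong} version while a rainbow-vertex colouring can economise by reusing colours along non-geodesic paths. The cleanest vehicle is to start from a path-like or broom-like backbone to pin down $rvc(G)=a$ (recall from Proposition \ref{strc(T)} that trees give $rvc=srvc=q$, so I cannot use a tree alone, since trees force the two parameters to coincide), and then attach gadgets that create alternative short detours which lower $rvc$ below the geodesic-dictated $srvc$. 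I expect the construction to glue together $a$ ``core'' internal vertices that every rainbow-vertex colouring must colour distinctly, together with additional structure contributing the extra $b-a$ colours only when geodesics are enforced.

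The key steps, in order, would be: first dispose of the small diagonal cases with explicit graphs and citations to Theorem \ref{thm1}(c); second, present the general construction for $3\le a\le b$ as a graph $G=G_{a,b}$ built from a backbone forcing $rvc\ge a$ plus detour gadgets ensuring $rvc\le a$; third, prove $rvc(G_{a,b})=a$ by exhibiting an explicit rainbow-vertex colouring with $a$ colours (using the detours to avoid needing distinct colours on long paths) and a matching lower bound (typically via a cut-vertex or diameter argument, invoking Proposition \ref{bridges} or (\ref{ineq2})); fourth, prove $srvc(G_{a,b})=b$ by identifying a pair of vertices whose \emph{only} geodesics are long and internally disjoint enough that their internal vertices must all receive distinct colours, forcing $srvc\ge b$, together with an explicit strongly-rainbow-vertex colouring using exactly $b$ colours for the upper bound.

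The main obstacle will be the \emph{lower bound} $srvc(G_{a,b})\ge b$ together with \emph{simultaneously} keeping $rvc(G_{a,b})=a$ fixed as $b$ grows. The tension is that adding structure to push $srvc$ up tends to also push $rvc$ up, so the detour gadgets must be engineered so that the extra vertices lie on \emph{some} short rainbow path (keeping $rvc$ small) but on \emph{no} short \emph{geodesic}, so that the strong version cannot exploit them. Getting a gadget that decouples these two quantities cleanly — and verifying that the claimed geodesics really are forced to be long and internally colour-distinct — is where the careful casework lies; I would try to reuse or adapt the graph families already appearing in the paper (the brooms $B_{s,t}$ and the Krivelevich--Yuster-type graphs $G_s$) since those are precisely designed to separate a rainbow parameter from its strong counterpart.
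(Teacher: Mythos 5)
Your necessity argument is fine and matches the paper's (Theorem \ref{thm1}(c)(ii),(iii) plus $rvc\le srvc$ from (\ref{ineq2})), and your diagonal cases are essentially the paper's — though note $C_4$ does not work for $a=b=2$: by Theorem \ref{Cnthm}(b), $rvc(C_4)=srvc(C_4)=1$; the path of length $3$ is the right example. But for $3\le a\le b$ your proposal contains no actual construction — it only describes what a construction should accomplish — and this is where the entire content of the theorem lies. The paper needs two nontrivial gadgets (Lemmas \ref{rvcpreslem1} and \ref{rvcpreslem2}): for $a=3$, a graph $F_b$ built on $K_{2b}$ with attachments $u_iv_i,u_iv_{i-1},u_iw_i,w_iv_i,w_iv_{i-1}$; for $4\le a\le b$, a graph $F_{a,b}$ with $n=2(b-1)(b-a+2)$ vertices $v_i$, two hubs ($u$ and the endpoint $u_{a-3}$ of a path that pins the diameter at $a+1$), private connectors $w_i,x_i$, and matchings inside $V$, $W$, $X$. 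Verifying both takes the bulk of the paper's section.

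More importantly, the one concrete mechanism you do propose for the key lower bound is self-defeating. You want ``a pair of vertices whose only geodesics are long and internally disjoint enough that their internal vertices must all receive distinct colours, forcing $srvc\ge b$.'' A geodesic with $b$ internal vertices has length at least $b+1$, so $\diam(G)\ge b+1$, and then (\ref{ineq2}) gives $rvc(G)\ge\diam(G)-1\ge b>a$, destroying the other half of what you must prove. The paper's lower bounds work in the opposite regime: the diameter stays small (equal to $4$ in Lemma \ref{rvcpreslem1}, $a+1$ in Lemma \ref{rvcpreslem2}), and $srvc\ge b$ is forced by a \emph{pigeonhole over many pairs} of vertices whose one or two geodesics are \emph{short} and funnel through a small set of hubs: with fewer than $b$ colours, two vertices $v_1,v_\ell$ receive identical colour ``profiles'' on their private connectors, and then none of their few short geodesics is vertex-rainbow (the same refining-partition trick as in Theorem \ref{strc(Kmn)}). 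Finally, your fallback of adapting the brooms $B_{s,t}$ or the Krivelevich--Yuster graphs $G_s$ is a dead end: brooms are trees, where you yourself note $rvc=srvc$, and for the $G_s$-type graphs (and the $H_s$ of Lei et al.) the strong and ordinary parameters also coincide, so neither family separates the two quantities.
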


\begin{thm}\label{trcpresthm}
Let $a$ and $b$ be positive integers. Then there exists a connected graph $G$ such that $trc(G) = a$ and $strc(G) = b$ if and only if $a = b \in \{1,3,4\}$ or $5 \le a \le b$.
\end{thm}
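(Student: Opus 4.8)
The plan is to prove Theorem \ref{trcpresthm} by establishing the necessity of the stated conditions and then constructing, for each admissible pair, a graph realising it. For necessity, I would first invoke Theorem \ref{thm1}: part (a) forces $a=b=1$ exactly when $G$ is complete, and part (b) shows that any non-complete graph has $trc(G)\ge 3$ and $strc(G)\ge 3$, so the value $2$ is impossible for both parameters. This immediately rules out $a=2$ or $b=2$. Next, Theorem \ref{thm1}(c)(iv)-(v) shows that $trc(G)=3 \Leftrightarrow strc(G)=3$ and $trc(G)=4 \Leftrightarrow strc(G)=4$, which forces $a=b$ whenever either parameter equals $3$ or $4$. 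Combined with $trc(G)\le strc(G)$ from (\ref{ineq3}), this yields exactly the constraint $a=b\in\{1,3,4\}$ or $5\le a\le b$, matching the claimed characterisation.

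For sufficiency I would treat the diagonal and off-diagonal cases separately. The cases $a=b\in\{1,3,4\}$ are realised by trivial or small examples: $K_2$ (or any $K_n$) gives $trc=strc=1$; the graph $(C_5)_{u\to K}$ from the remark after Theorem \ref{thm1} gives $trc=strc=3$; and a graph with $strc(G)=4$ (whence $trc(G)=4$ by Theorem \ref{thm1}(c)(v)) handles the last diagonal value. For the general diagonal $a=b\ge 5$, trees provide clean examples via Proposition \ref{strc(T)}(b): choosing a tree of order $n$ with $q$ non-pendent vertices gives $trc(T)=strc(T)=n-1+q$, and by varying $n$ and $q$ over paths and other trees one can hit every integer value $\ge 5$.

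The main work, and the expected obstacle, is the strict case $5\le a<b$, where I must build a graph whose total and strong total rainbow connection numbers differ by a prescribed amount. Here I would adapt the style of construction used in Theorems \ref{CLthm} and \ref{LiMaoShithm}, and in particular mirror the gadget behind Lemma \ref{srcG>srcHlem}, where adding edges or enforcing geodesic constraints inflates the strong parameter while keeping the weak one controlled. The idea is to start from a tree-like or broom-like skeleton that pins down $trc(G)=a$ (using the bridge/cut-vertex lower bound of Proposition \ref{bridges} together with an explicit total-rainbow colouring), and then attach a controlled gadget that forces many additional colours on the unique geodesics between certain pairs, driving $strc(G)$ up to exactly $b$ without disturbing $trc(G)$. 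The delicate part will be verifying both bounds simultaneously: proving $strc(G)\le b$ requires exhibiting an explicit strongly total rainbow connected colouring, while $strc(G)\ge b$ needs a counting or refining-partition argument (in the spirit of Theorem \ref{strc(Kmn)}) showing that fewer colours must leave some geodesic non-rainbow. Keeping $trc(G)$ pinned at $a$ throughout the enlargement, and ensuring the parameters can be tuned independently across the full range $5\le a<b$, is where the construction must be designed most carefully.
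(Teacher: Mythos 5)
Your necessity argument is exactly the paper's: Theorem \ref{thm1}(a) and (b) rule out the value $2$ and settle $a=1$, parts (c)(iv)--(v) force $a=b$ whenever either parameter equals $3$ or $4$, and $trc(G)\le strc(G)$ gives the ordering. Most of your diagonal cases are also fine in substance, but your treatment of $a=b=4$ is circular as written: ``a graph with $strc(G)=4$'' is precisely what needs to be exhibited. The fix is immediate from tools you already cite --- by Proposition \ref{strc(T)}(b) the star of order $4$ has $trc=strc=4$, and in fact stars of order $a$ cover every diagonal value $a=b\ge 3$ at once, which is what the paper uses instead of juggling $(C_5)_{u\to K}$, an unnamed graph, and general trees.

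The genuine gap is the off-diagonal case $5\le a<b$, which is the heart of the theorem and which you leave as a plan rather than a proof. You do identify the correct ingredients --- pendent edges pinning $trc(G)=a$ via Proposition \ref{bridges}, and a refining-partition argument in the spirit of Theorem \ref{strc(Kmn)} forcing $strc(G)\ge b$ --- and this is indeed the strategy of the paper's Lemma \ref{trcpreslem}. But you never specify the gadget, and the quantitative choices are where the proof lives. The paper takes the complete multipartite graph $K_{1,\dots,1,n}$ with $m$ singleton classes $\{u_1\},\dots,\{u_m\}$ and one class $V$ of size $n=(b-2)^m+1$, attaches $a-1$ pendent edges at $u_1$, and requires $m$ large enough that $(b-1)^{m-1}>(b-2)^m$. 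Each of the four bounds then needs its own argument: $trc\ge a$ from the $a-1$ bridges plus the cut-vertex $u_1$; $trc\le a$ from an explicit total-colouring that reuses only four colours outside the pendent edges (this is exactly where $a-1\ge 4$, i.e.\ $a\ge 5$, is needed --- your sketch never explains why $5$ is the threshold); $strc\ge b$ by restricting any colouring with fewer than $b$ colours to the $K_{1,\dots,1,n}$, invoking Theorem \ref{strc(Kn1nt)} since $\lceil\!\sqrt[m]{n}\,\rceil+1=b$, and observing that the resulting bad pair $w,x\in V$ has all of its geodesics inside that subgraph; and $strc\le b$ from a colouring assigning distinct vectors over $\{1,\dots,b-1\}^{m-1}$ to the vertices of $V$, which requires the inequality on $m$. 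Without constructing such a graph and carrying out this verification, the case $5\le a<b$ --- and hence the characterisation --- remains unproven.
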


To prove Theorems \ref{rvcpresthm} and \ref{trcpresthm}, we first prove three auxiliary lemmas.

\begin{lem}\label{rvcpreslem1}
For every $b\ge 3$, there exists a graph $G$ such that $rvc(G)=3$ and $srvc(G)=b$.
\end{lem}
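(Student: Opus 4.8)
The plan is to construct, for each $b \ge 3$, a single graph $G$ that forces $rvc(G) = 3$ while allowing $srvc(G)$ to be pushed up to exactly $b$. The natural strategy is to build a graph whose diameter is large enough that a single vertex-rainbow path between the two farthest vertices requires control over many internal vertices (forcing $srvc$ up), yet which simultaneously admits short \emph{non-geodesic} rainbow paths that let $rvc$ stay small. The key tension to exploit is precisely the gap between arbitrary paths (for $rvc$) and shortest paths (for $srvc$): I would look for a graph where there are many ``shortcut'' detours that are vertex-rainbow but not geodesics, so that $rvc$ can be realised with very few colours, while every \emph{geodesic} between some fixed pair of vertices passes through a long sequence of distinct cut-vertices that must all receive distinct colours.

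First, I would fix the target pair $x, y$ and design a ``spine'' between them consisting of a path with roughly $b$ internal cut-vertices, so that by Proposition \ref{bridges} (the $srvc(G) \ge rvc(G) \ge c$ bound via cut-vertices) we immediately obtain the lower bound $srvc(G) \ge b$. Simultaneously I would attach, at appropriate points, extra vertices or small gadgets (for instance, additional parallel paths between the same endpoints, or pendant-like structures) whose only role is to provide alternative \emph{non-shortest} $x$–$y$ connections. These alternative paths should have only three internal vertices needing distinct colours when we seek a mere vertex-rainbow (not geodesic) connection, giving $rvc(G) \le 3$; and I would verify that no cheaper colouring works, using the diameter lower bound $rvc(G) \ge \diam(G) - 1$ from (\ref{ineq2}) together with a short argument that two colours cannot suffice (via Theorem \ref{thm1}(c)(ii), since $\diam(G) > 2$ will hold, and (iii) to rule out $rvc(G) = 2$).

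Second, for the matching upper bound $srvc(G) \le b$, I would exhibit an explicit vertex-colouring using exactly $b$ colours and check that every pair of vertices admits a vertex-rainbow \emph{geodesic}. The verification splits into cases by the ``type'' of the endpoints (both on the spine, one on a gadget, both on gadgets, etc.), and in each case I need the geodesic's internal vertices to be rainbow under the proposed colouring; assigning the $b$ distinct colours along the spine's cut-vertices and reusing a small palette on the gadgets should make this routine once the structure is pinned down. Combining the two bounds yields $srvc(G) = b$.

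The main obstacle I anticipate is the delicate balancing act in a \emph{single} construction: the same structural features that force every geodesic through $b$ distinct cut-vertices (driving $srvc$ up) tend to also lengthen or constrain the arbitrary paths available for $rvc$, so keeping $rvc(G)$ pinned at exactly $3$ — not $2$, and not growing with $b$ — is the crux. In particular I must ensure the ``shortcut'' gadgets genuinely provide vertex-rainbow $x$–$y$ paths with at most three internal-colour classes for \emph{every} relevant pair, and simultaneously that these shortcuts are never themselves geodesics (otherwise they would lower $srvc$ below $b$). I expect the proof will hinge on a clean case analysis verifying that the cut-vertices along the spine are forced to be distinct in any strongly rainbow vertex-connected colouring, while the diameter and a direct two-colour obstruction argument nail down $rvc(G) = 3$ exactly.
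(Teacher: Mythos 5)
Your approach has a fatal structural flaw: both mechanisms you invoke for the lower bound $srvc(G)\ge b$ would force $rvc(G)\ge b$ as well, so they cannot coexist with $rvc(G)=3$. First, Proposition \ref{bridges} states $srvc(G)\ge rvc(G)\ge c$: the cut-vertex count lower-bounds \emph{both} parameters, because a cut-vertex separating two vertices lies on \emph{every} path between them, not merely on every geodesic. Hence a spine of $b$ cut-vertices between $x$ and $y$ forces $rvc(G)\ge b$, and no ``shortcut'' gadgetry can help: any gadget providing an alternative $x$--$y$ route avoiding a spine vertex would mean that vertex was never a cut-vertex in the first place, and then your claimed bound $srvc(G)\ge b$ evaporates. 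Second, a spine with about $b$ internal vertices gives $\diam(G)\ge b+1$, and then inequality (\ref{ineq2}) yields $rvc(G)\ge\diam(G)-1\ge b$, again incompatible with $rvc(G)=3$. You noticed this tension yourself (``the same structural features \dots tend to also lengthen or constrain the arbitrary paths available for $rvc$''), but within your framework it is not a balancing act to be engineered around; it is an impossibility. Any valid example must have $\diam(G)\le 4$ and essentially no cut-vertices, so the gap between $rvc$ and $srvc$ cannot come from cut-vertices or from distance.

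The paper's construction shows the mechanism that does work: take a clique $K_{2b}$ on $u_1,\dots,u_{2b}$ and attach, for each $i$, vertices $w_i,v_i$ with edges $u_iv_i$, $u_iv_{i-1}$, $u_iw_i$, $w_iv_i$, $w_iv_{i-1}$ (indices mod $2b$). This graph is $2$-connected and has diameter $3$. For non-consecutive $i,\ell$, the path $w_iu_iu_\ell w_\ell$ is the \emph{unique} $w_i$--$w_\ell$ geodesic; so in any colouring with fewer than $b$ colours, the pigeonhole principle puts a common colour on three of the $2b$ clique vertices, two of which are non-consecutive, destroying the unique geodesic between the corresponding $w$'s and giving $srvc\ge b$. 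Meanwhile $rvc=3$: longer, non-geodesic detours through the $v_j$'s remain available and can be made vertex-rainbow using only $3$ colours (two colours alternating on the clique, one colour elsewhere), while a two-colour obstruction along the same unique-geodesic pairs rules out $rvc\le 2$. In short, the engine of the separation is uniqueness of geodesics inside a small-diameter, highly connected graph --- not cut-vertices --- and that is precisely the ingredient your proposal lacks.
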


\begin{proof}
We construct a graph $F_b$ as follows. We take a complete graph $K_{2b}$, say with vertices $u_1,\dots,u_{2b}$, and further vertices $v_1,\dots,v_{2b},w_1,\dots,w_{2b}$. For $1\le i\le 2b$, we connect the edges $u_iv_i,u_iv_{i-1},u_iw_i,w_iv_i,w_iv_{i-1}$. Throughout, the indices of the vertices $u_i,v_i,w_i$ are taken modulo $2b$. We show that $rvc(F_b)=3$ and $srvc(F_b)=b$.

Suppose firstly that we have a vertex-colouring of $F_b$, using at most two colours. Since $2b\ge 6$, we may assume that $u_1$ and $u_\ell$ have the same colour, for some $3\le\ell\le 2b-1$. Then note that $w_1u_1u_\ell w_\ell$ is the unique $w_1-w_\ell$ geodesic, with length $3$. Thus, there does not exist a vertex-rainbow $w_1-w_\ell$ path, and we have $rvc(F_b)\ge 3$. Now, we define a vertex-colouring $f$ of $F_b$ as follows. Let $f(u_i)=1$ if $i$ is odd, and $f(u_i)=2$ if $i$ is even. Let $f(z)=3$ for all other vertices $z$. It is easy to check that $f$ is a rainbow vertex-connected colouring for $F_b$. For example, to connect $w_1$ to $w_i$ with a vertex-rainbow path, where $3\le i\le 2b-1$, we may take $w_1u_1u_iw_i$ if $i$ is even, and $w_1u_1u_{i-1}v_{i-1}w_i$ if $i$ is odd. Thus $rvc(F_b)\le 3$, and we have $rvc(F_b)=3$.

Next, suppose that we have a vertex-colouring of $F_b$, using fewer than $b$ colours. Then, three of the vertices $u_i$ have the same colour, so we may assume that $u_1$ and $u_\ell$ have the same colour, for some $3\le\ell\le 2b-1$. Note that $w_1u_1u_\ell w_\ell$ is the unique $w_1-w_\ell$ geodesic (with length $3$). Thus, there does not exist a vertex-rainbow $w_1-w_\ell$ geodesic, and we have $srvc(F_b)\ge b$. Now, we define a vertex-colouring $g$ of $F_b$ as follows. Let $g(u_i)=\lceil\frac{i}{2}\rceil$ for $1\le i\le 2b$, and $g(z)=1$ for all other vertices $z$. We show that $g$ is a strongly rainbow vertex-connected colouring for $F_b$. It is easy to see that each vertex $u_i$ is at distance at most $2$ from every other vertex. Thus, it suffices to check that $v_1$ is connected to each $v_i$ and $w_j$ by a vertex-rainbow geodesic, and similarly for $w_1$ to each $w_j$. Now, $d(v_1,w_1)=d(v_1,w_2)=1$ and $d(v_1,v_2)=d(v_1,v_{2b})=2$. Also, $d(v_1,v_i)=d(v_1,w_j)=3$ for $3\le i\le 2b-1$ and $3\le j\le 2b$, whence $v_1u_1u_iv_i$ and $v_1u_1u_jw_j$ are vertex-rainbow $v_1-v_i$ and $v_1-w_j$ geodesics. Likewise, $d(w_1,w_2)=d(w_1,w_{2b})=2$, and $d(w_1,w_j)=3$ for $3\le j\le 2b-1$, whence $w_1u_1u_jw_j$ is a vertex-rainbow $w_1-w_j$ geodesic. Thus $srvc(F_b)\le b$, and we have $srvc(F_b)=b$.
\end{proof}

\begin{lem}\label{rvcpreslem2}
For every $4\le a\le b$, there exists a connected graph $G$ such that $rvc(G)=a$ and $srvc(G)=b$.
\end{lem}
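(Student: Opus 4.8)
The plan is to generalise the construction $F_b$ from the previous lemma so that the rainbow vertex-connection number can be inflated from $3$ up to any prescribed value $a$ with $4 \le a \le b$, while keeping the strong rainbow vertex-connection number equal to $b$. In $F_b$, the value $rvc(F_b)=3$ arose because the ``inner'' clique $K_{2b}$ could be $2$-coloured in a way that still allowed short vertex-rainbow $w_1$--$w_j$ detours, but any $2$-colouring forced two of the $u_i$ to share a colour on the \emph{unique} geodesic $w_1 u_1 u_\ell w_\ell$. To raise $rvc$ to $a$, I would lengthen or multiply the forced geodesics so that a vertex-rainbow connection genuinely requires $a$ distinct internal colours, rather than just $3$. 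Concretely, I expect to attach to each $u_i$ (or to designated vertices) a gadget that creates pairs of vertices whose every path must pass through a prescribed number of internal vertices, thereby pinning $rvc(G) \ge a$ from below, while simultaneously arranging that a colouring with $b$ colours on the $u_i$ (as in Lemma \ref{rvcpreslem1}) remains strongly rainbow vertex-connected.

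The key steps, in order, are as follows. First I would describe the graph $G$ explicitly: take the graph $F_b$ of Lemma \ref{rvcpreslem1} as a core, and modify it by replacing or augmenting the paths between the ``$w$'' vertices so that the shortest $w_1$--$w_\ell$ connections have length forcing $a-1$ distinct internal colours. Second, I would prove the lower bound $rvc(G) \ge a$: identify a specific pair of vertices whose every connecting path of minimum feasible length has $a-1$ internal vertices that must receive distinct colours in any rainbow vertex-connected colouring, so that fewer than $a$ colours fails. Third, I would exhibit an explicit vertex-colouring with exactly $a$ colours and verify it is rainbow vertex-connected, giving $rvc(G) \le a$. Fourth, I would prove $srvc(G) \ge b$ exactly as in Lemma \ref{rvcpreslem1}: with fewer than $b$ colours, two of the $u_i$ share a colour, and since $w_1 u_1 u_\ell w_\ell$ (or its analogue) remains the \emph{unique} geodesic between the corresponding $w$-vertices, no vertex-rainbow geodesic exists. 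Finally, I would give a colouring with $b$ colours, essentially $g(u_i)=\lceil i/2\rceil$ together with the new gadget vertices coloured to stay consistent, and check that every pair is joined by a vertex-rainbow geodesic, yielding $srvc(G) \le b$.

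The main obstacle I anticipate is the simultaneous calibration of the two bounds: the gadget that forces $rvc(G)$ up to $a$ must not inadvertently raise $srvc(G)$ above $b$, nor lower it below $b$. In particular, the delicate point in Lemma \ref{rvcpreslem1} was that non-geodesic detours (e.g.\ $w_1 u_1 u_{i-1} v_{i-1} w_i$) are available for the \emph{rainbow} version but geodesics are rigid for the \emph{strong} version; I must preserve exactly this asymmetry after modification. The requirement $a \le b$ is what makes this feasible, since the $b$ colours available on the $u_i$ in the strong colouring comfortably absorb the $a$ colours needed for the rainbow lower bound. I expect the verification that the $b$-colouring remains strongly rainbow vertex-connected over the enlarged vertex set, and that the $a$-colouring is rainbow vertex-connected, to be the most case-heavy portions, but these should reduce to the same distance computations ($d \in \{1,2,3\}$ among core vertices, with the new gadget vertices at controlled distances) already carried out for $F_b$.
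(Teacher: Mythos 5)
Your plan has a genuine gap, and it sits exactly where you flag ``the main obstacle'': for the mechanism you propose, the calibration is not delicate but impossible. You want to keep the clique $K_{2b}$ of $F_b$ so that $srvc(G)\ge b$ still follows from the unique-geodesic pigeonhole of Lemma \ref{rvcpreslem1}, while attaching a gadget (lengthened $w_1$--$w_\ell$ connections, or pendent structure at the $u_i$) that forces $rvc(G)\ge a$. But look at what that pigeonhole says about \emph{any} strongly rainbow vertex-connected colouring of such a graph: $u_i$ and $u_\ell$ may share a colour only when $\ell=i\pm 1$, so the clique alone carries at least $b$ colours. Now consider the gadget. Since the core $F_b$ has diameter $3$, forcing $rvc(G)\ge a\ge 4$ requires (as you yourself say) pairs of vertices all of whose connecting paths traverse roughly $a$ forced internal vertices; concretely, a gadget vertex $p$ at distance about $a+1$ from the $w_\ell$'s. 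For $3\le\ell\le 2b-1$ the geodesic from $p$ to $w_\ell$ is \emph{unique}: it runs through the gadget's internal vertices, the attachment vertex, and then $u_\ell$ (the detours through $v_\ell$ or $v_{\ell-1}$ are strictly longer). Rainbowness of all these geodesics forces the gadget's internal colours to be distinct from $c(u_\ell)$ for every such $\ell$, i.e.\ disjoint from essentially all colours on the clique. For a pendent path of length $a-1$ at $u_1$ (the natural implementation of your sketch) this yields $srvc(G)\ge (a-1)+(b-1)=a+b-2>b$; subdividing the edges $u_iw_i$ is worse still, since subdivision vertices near different $w_i$'s must also be mutually distinct. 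So your claim that the $b$ colours on the $u_i$ ``comfortably absorb'' the $a$ colours needed for the rainbow bound is exactly backwards: those colour sets must be disjoint, and $srvc$ overshoots $b$.

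This is why the paper's proof does not build on $F_b$ at all. Its graph $F_{a,b}$ has no clique: it has a hub $u$, a path $u_0\cdots u_{a-3}$ (which supplies the diameter, hence $rvc\ge a$, and whose colours $1,\dots,a-3$ lie \emph{inside} the budget of $b$ because this path is shared by all critical pairs rather than pendent), and $n=2(b-1)(b-a+2)$ vertices $v_i$, each joined to $u$ through a private vertex $w_i$ and to $u_{a-3}$ through a private vertex $x_i$, with matchings inside $V,W,X$. Every critical pair $v_1,v_\ell$ then has exactly \emph{two} geodesics, one through $u$ and one through $u_{a-3}$, so the bound $srvc\ge b$ comes from a two-dimensional pigeonhole on the colour pairs $(f(x_i),f(w_i))$, and the matching strong colouring assigns distinct vectors from $\{a-2,\dots,b-1\}\times\{1,\dots,b-1\}$ to the matched pairs of $V$. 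Your proposal faces a dichotomy it does not resolve: attach the gadget at one point and the unique geodesics overload the colour budget as computed above; attach it at several points so geodesics can route around, and then your step for $srvc\ge b$ (``exactly as in Lemma \ref{rvcpreslem1}'') no longer suffices, because one must show that \emph{all} geodesics between a critical pair fail simultaneously, which is precisely the product-pigeonhole structure the paper introduces.
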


\begin{proof}
We construct a graph $F_{a,b}$ as follows. Let $n=2(b-1)(b-a+2)\ge 12$. We take a set of vertices $V=\{v_1,\dots, v_n\}$, and another vertex $u$ and a path $u_0\cdots u_{a-3}$. We add the paths $uw_iv_i$ and $u_{a-3}x_iv_i$ for $1\le i\le n$, and then the edges $v_\ell v_{\ell+1},w_\ell w_{\ell+1},x_\ell x_{\ell+1}$ for $1\le\ell\le n$ with $\ell$ odd. Let $U=\{u_0,\dots,u_{a-3}\}$, $W=\{w_1,\dots,w_n\}$ and $X=\{x_1,\dots,x_n\}$. Note that we have perfect matchings within the sets $V, W$ and $X$. We show that $rvc(F_{a,b})=a$ and $srvc(F_{a,b})=b$.

Clearly we have $rvc(F_{a,b})\ge\diam(F_{a,b})-1=a$. Now, we define a vertex-colouring $c$ of $F_{a,b}$ as follows. Let $c(u_j)=j$ for $1\le j\le a-3$. For $1\le i\le n$, let $c(w_{i+1})=c(x_i)=a-2$ if $i$ is odd, and $c(w_{i-1})=c(x_i)=a-1$ if $i$ is even. Let $c(z)=a$ for all other vertices $z$. It is easy to check that $c$ is a rainbow vertex-connected colouring for $F_{a,b}$. For example, for $i\neq 2$, to connect $v_1$ to $v_i$ with a vertex-rainbow path, we may take $v_1x_1u_{a-3}x_iv_i$ if $i$ is even, and $v_1x_1u_{a-3}x_{i+1}v_{i+1}v_i$ if $i$ is odd, since $a\ge 4$. Thus $rvc(F_{a,b})\le a$, and we have $rvc(F_{a,b})=a$.

Next, suppose that there exists a strongly rainbow vertex-connected colouring $f$ of $F_{a,b}$, using at most $b-1$ colours, say colours $1,2,\dots,b-1$. Then note that for every $1\le i\le n$, the unique $u_0-v_i$ geodesic is $u_0u_1\cdots u_{a-3}x_iv_i$. Thus we may assume that $f(u_j)=j$ for $1\le j\le a-3$, so that $f(x_i)\in\{a-2,a-1,\dots,b-1\}$ for $1\le i\le n$. Also, we have $f(w_i), f(u)\in\{1,\dots,b-1\}$ for $1\le i\le n$. For $a-2\le p\le b-1$ and $1\le q\le b-2$, we define the set $A_{p,q}\subset V$ where
\begin{align}
A_{p,1} &= \{v_i\in V : f(x_i)=p\textup{ and }f(w_i)=f(u)\textup{ or }f(u)+1\textup{ (mod }b-1)\},\nonumber\\
A_{p,q} &= \{v_i\in V : f(x_i)=p\textup{ and }f(w_i)=f(u)+q\textup{ (mod }b-1)\}\textup{, for $q\ge 2$.}\nonumber
\end{align}
Note that $V=\bigcup_{p,q}\{A_{p,q}:A_{p,q}\neq\emptyset\}$ is a partition of $V$ with at most $(b-2)(b-a+2)$ parts. Since $n=2(b-1)(b-a+2)$, there exists a set $A_{r,s}$ with at least three vertices. Thus, we may assume that $v_1,v_\ell\in A_{r,s}$ with $\ell\neq 2$. Observe that the path $v_1x_1u_{a-3}x_\ell v_\ell$ is not vertex-rainbow, since $f(x_1)=f(x_\ell)=r$. Also, the path $v_1w_1uw_\ell v_\ell$ is not vertex-rainbow, since $f(w_1)$ and $f(w_\ell)$ are either in $\{f(u),f(u)+1\}$ (mod $b-1)$, or they are both $f(u)+s$ (mod $b-1)$. Since these two paths are the only $v_1-v_\ell$ geodesics (with length $4$), we have a contradiction. Thus, $srvc(F_{a,b})\ge b$.

Finally, we define a vertex-colouring $g$ of $F_{a,b}$, using colours $1,2,\dots,b$, as follows. Let $g(u_j)=j$ for $1\le j\le a-3$, and $g(u)=g(u_0)=g(v_i)=b$ for $1\le i\le n$. Now, note that there are $(b-1)(b-a+2)$ pairs $\{v_\ell, v_{\ell+1}\}$ with $\ell$ odd, and also $(b-1)(b-a+2)$ distinct vectors of length $2$, whose first coordinate is in $\{a-2,\dots,b-1\}$ and second coordinate is in $\{1,\dots,b-1\}$. Thus we may assign these distinct vectors to all vertices of $V$ such that, both vertices of a pair $\{v_\ell, v_{\ell+1}\}$ with $\ell$ odd receive the same vector (so that every vector appears exactly twice). If $v_\ell$ and $v_{\ell+1}$ have been assigned with the vector $\vec{v}$, then we set $g(x_\ell)=g(x_{\ell+1})=\vec{v}_1\in \{a-2,\dots,b-1\}$, and $g(w_\ell)=g(w_{\ell+1})=\vec{v}_2\in\{1,\dots,b-1\}$. We show that $g$ is a strongly rainbow vertex-connected colouring for $F_{a,b}$. We must show that for every $x,y\in V(F_{a,b})$, there is a vertex-rainbow $x-y$ geodesic.

\begin{itemize}
\item If $x\in U$ and $y\neq u$, then it is easy to find a vertex-rainbow $x-y$ geodesic. For example, if $x=u_j$ and $y=w_i$, then we take $u_j\cdots u_{a-3}x_iv_iw_i$. If $x=u_j$ and $y=u$, then we take $u_j\cdots u_{a-3}x_\ell v_\ell w_\ell u$, where $v_\ell$ is assigned with the vector $(a-2,b-1)$. Similarly, it is easy to deal with the case when $x=u$ and $y\in V\cup W\cup X$.
\item Now we consider the case $x,y\in V\cup W\cup X$. Firstly, the cases $x,y\in W$ and $x,y\in X$ are clear, since $d(x,y)\le 2$. Next, suppose that $x\in V$, say $x=v_1$. Then the case $y\in\{w_1,x_1,v_2,w_2,x_2\}$ is clear, since we have $d(x,y)\le 2$. If $y=w_\ell$ (resp.~$x_\ell$) for some $\ell\neq 2$, then we take $v_1w_1uw_\ell$ (resp.~$v_1x_1u_{a-3}x_\ell$). If $y=v_\ell$ for some $\ell\neq 2$, then $x$ and $y$ are assigned with different vectors, say $\vec{x}\neq\vec{y}$. If $\vec{x}_1\neq\vec{y}_1$, then we take $v_1x_1u_{a-3}x_\ell v_\ell$, and if $\vec{x}_2\neq\vec{y}_2$, then we take $v_1w_1uw_\ell v_\ell$. Finally, it remains to consider the case $x\in W$ and $y\in X$. We may assume that $x=w_1$ and $y=x_\ell$ for some $1\le \ell\le n$. We take $w_1v_1x_1$ if $\ell=1$; $w_1v_1x_1x_2$ if $\ell=2$; and $w_1v_1x_1u_{a-3}x_\ell$ if $\ell\ge 3$.
\end{itemize}

We always have a vertex-rainbow $x-y$ geodesic, so that $g$ is a strongly rainbow vertex-connected colouring. Therefore $srvc(F_{a,b})\le b$, and we have $srvc(F_{a,b})=b$.
\end{proof}

\begin{lem}\label{trcpreslem}
For every $5\le a<b$, there exists a connected graph $G$ such that $trc(G)=a$ and $strc(G)=b$.
\end{lem}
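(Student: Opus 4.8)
The plan is to construct a graph $E_{a,b}$ for each pair $5 \le a < b$ by adapting the construction $F_{a,b}$ from Lemma~\ref{rvcpreslem2} to the total-colouring setting. In the vertex-colouring lemmas, the gap between the rainbow and strong rainbow versions was forced by \emph{vertices} on geodesics; now I must force the gap using \emph{both} edges and internal vertices. The natural idea is to build a graph where the relevant geodesics between a fat set of ``test'' vertices $v_1,\dots,v_n$ all have one of exactly two shapes (two internally disjoint paths of the same length), so that a total-colouring economical in colours is forced to repeat a colour — on an edge or an internal vertex — along every such geodesic simultaneously. As in Lemma~\ref{rvcpreslem2}, I expect to take $n$ polynomially large in $b$ (roughly $n \asymp (b-a)\cdot b$ or a similar product) so that a pigeonhole/refining-partition argument produces two test vertices $v_1, v_\ell$ whose unique geodesics are both non-total-rainbow.

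\textbf{The three things to establish.} First I would fix the diameter and the lower bound $trc(E_{a,b}) \ge 2\,\diam - 1 = a$ via (\ref{ineq3}), then exhibit an explicit total-colouring with $a$ colours that is total rainbow connected, giving $trc(E_{a,b}) = a$; this mirrors the colouring $c$ in Lemma~\ref{rvcpreslem2}, extended to edges, and should be routine once the geometry is right. Second, for the upper bound $strc(E_{a,b}) \le b$, I would produce an explicit strongly total rainbow connected colouring using $b$ colours, assigning distinct ``vectors'' (now encoding both an edge-colour coordinate and a vertex-colour coordinate) to the matched pairs $\{v_\ell,v_{\ell+1}\}$ exactly as the colouring $g$ was built, and then verify case-by-case that every pair of vertices has a total-rainbow geodesic. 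Third, and most importantly, for the lower bound $strc(E_{a,b}) \ge b$, I would run the refining-partition argument of Theorem~\ref{strc(Kmn)} and Lemma~\ref{rvcpreslem2}: assuming a strong total colouring with at most $b-1$ colours, the forced colours on the spine $u_0 \cdots u_{a-3}$ pin down most coordinates, and a counting argument over the two geodesic-shapes yields a part of size $\ge 3$ in the partition of $\{v_1,\dots,v_n\}$, hence two vertices $v_1, v_\ell$ both of whose geodesics fail to be total-rainbow — a contradiction.

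\textbf{The main obstacle} will be arranging the gadget so that the \emph{only} $v_1$--$v_\ell$ geodesics are the two controlled paths, while \emph{simultaneously} ensuring that total-rainbowness (edges \emph{and} internal vertices distinct) can be defeated by a deficiency of just one colour. In the vertex-only case of Lemma~\ref{rvcpreslem2}, a repeated colour on a single internal vertex killed a geodesic; for total-colourings I must guarantee that along each of the two geodesic types, either a repeated edge-colour or a repeated internal-vertex colour is unavoidable. This forces the refining partition to track pairs of coordinates (one for an edge, one for an internal vertex) on each of the two paths, roughly doubling the number of coordinates and thus requiring a larger $n$; getting the counting to close with exactly a $b-1$ versus $b$ separation, and handling the boundary colours on the spine and the apex vertices $u,u_0,u_{a-3}$ so they do not accidentally create extra geodesics or extra free colours, is the delicate part.

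\textbf{Finally,} combining this lemma with Lemmas~\ref{rvcpreslem1} and~\ref{rvcpreslem2} (for the vertex parameters) and the small-value analysis of Theorem~\ref{thm1} will let me assemble the full characterisation: Theorem~\ref{trcpresthm} also needs the diagonal cases $a=b\in\{1,3,4\}$ and $a=b\ge 5$, which come from trees and complete graphs, together with the present strict-inequality construction for $5 \le a < b$, and the impossibility of $a=b=2$ and of $2 \le a \le 4 < b$ with $a \ne b$, which follow from parts (a), (b) and (c) of Theorem~\ref{thm1}.
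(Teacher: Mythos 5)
There is a genuine gap: your plan for the lower bound $trc(E_{a,b})\ge a$ cannot work as stated. You propose to get it from inequality (\ref{ineq3}) as $trc\ge 2\,\diam-1=a$, but $2\,\diam(G)-1$ is always odd, so this can never pin down an even value of $a$; for even $a$ you would be short by at least one and would need a separate argument you never supply. Worse, the plan is internally inconsistent: you propose to keep the spine $u_0\cdots u_{a-3}$ from the graph $F_{a,b}$ of Lemma \ref{rvcpreslem2} (you use its forced colours in your $strc$ lower bound), but that spine makes the diameter roughly $a+1$, whence (\ref{ineq3}) forces $trc\ge 2(a+1)-1=2a+1>a$, contradicting the target $trc=a$. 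In the vertex-colouring setting of Lemma \ref{rvcpreslem2} the bound $rvc\ge\diam-1$ scales linearly with the diameter, so a long spine is exactly right; in the total setting the factor $2$ destroys this, and a direct ``totalization'' of $F_{a,b}$ is the wrong shape of graph. Beyond this, the actual gadget is never defined — your own ``main obstacle'' paragraph concedes that arranging the two controlled geodesic shapes while defeating total-rainbowness with one missing colour is unresolved — so the proposal remains a strategy sketch rather than a proof.

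The paper resolves precisely this tension by decoupling the two parameters instead of coupling them through the diameter. It takes the complete multipartite graph $K_{1,\dots,1,n}$ with $m$ singleton classes and a large class $V$ of size $n=(b-2)^m+1$ (diameter $2$), and attaches $a-1$ pendent edges at $u_1$. The lower bound $trc\ge a$ then comes from Proposition \ref{bridges} ($a-1$ bridges plus the cut-vertex $u_1$ need distinct colours), not from the diameter, so no parity issue arises; the lower bound $strc\ge b$ comes for free from Theorem \ref{strc(Kn1nt)}, since $\lceil\!\sqrt[m]{n}\,\rceil+1=b$ and every geodesic between two vertices of $V$ stays inside the multipartite core — the refining-partition work you wanted to redo is already encapsulated there. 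Both upper bounds are explicit colourings, with $m$ chosen large enough that $(b-1)^{m-1}\ge n$ so that distinct colour-vectors can be assigned to $V$. If you want to salvage your approach, you should abandon the diameter-based lower bound and instead build bridges and cut-vertices into the construction to certify $trc=a$, which is essentially what the paper does.
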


\begin{proof}
We consider the complete multipartite graph $K_{1,\dots,1,n}$, where there are $m\ge 2$ singleton classes, say $\{u_1\},\dots,\{u_m\}$. Let $U=\{u_1,\dots,u_m\}$, and $V$ be the class with $n$ vertices. Given $5\le a<b$, let $G_{a,b,m}$ be the graph constructed as follows. We take $K_{1,\dots,1,n}$, and set $n=(b-2)^m+1$. We then add $a-1\ge 4$ pendent edges at $u_1$, say $W=\{w_1,\dots,w_{a-1}\}$ is the set of pendent vertices. We claim that for sufficiently large $m$, we have $trc(G_{a,b,m})=a$ and $strc(G_{a,b,m})=b$.

Since the bridges of $G_{a,b,m}$ are the $a-1$ pendent edges, and the only cut-vertex is $u_1$, clearly we have $trc(G_{a,b,m})\ge a$ by Proposition \ref{bridges}. Now we define a total-colouring $f$ of $G_{a,b,m}$ as follows. Let $f(u_1w_\ell)=\ell$ for $1\le \ell\le a-1$. For every $v\in V$, let $f(u_1v)=1$, and $f(u_iv)=2$ for all $2\le i\le m$. Let $f(u_iu_j)=4$ for all $1\le i<j\le m$. Let $f(u_1)=a$, and $f(z)=3$ for all $z\in V(G_{a,b,m})\setminus \{u_1\}$. We claim that $f$ is a total rainbow connected colouring for $G_{a,b,m}$. We need to show that for every $x,y\in V(G_{a,b,m})$, there is a total-rainbow $x-y$ path. Since $u_1$ is connected to all other vertices, it suffices to consider $x,y\in V(G_{a,b,m})\setminus\{u_1\}$. If $x,y\not\in W$ and $x,y$ are not adjacent, then $x,y\in V$, in which case we take the path $xu_1u_2y$. Now suppose $x\in W$. Then we can take the path $xu_1y$, unless if $x=w_1$ and $y\in V$, in which case we take $xu_1u_2y$; or $x=w_4$ and $y\in U\setminus\{u_1\}$, in which case we take $xu_1vy$ for some $v\in V$.
Thus $f$ is a total rainbow connected colouring for $G_{a,b,m}$, and $trc(G_{a,b,m})\le a$. We have $trc(G_{a,b,m})=a$.

Now, suppose that we have a total-colouring of $G_{a,b,m}$, using fewer than $b$ colours. Note that $\lceil\!\sqrt[m]{n}\,\rceil+1=b$, so that by Theorem \ref{strc(Kn1nt)}, for the copy of $K_{1,\dots,1,n}$, we have $strc(K_{1,\dots,1,n})=b$. It follows that when restricted to the $K_{1,\dots,1,n}$, there are two vertices $w,x$ that are not connected by a total-rainbow $w-x$ geodesic. This means that we have $w,x\in V$, and the paths $xuw$, for $u\in U$, are all not total-rainbow. Since these paths are also all the possible $w-x$ geodesics in $G_{a,b,m}$, we do not have a total-rainbow $w-x$ geodesic in $G_{a,b,m}$. Thus $strc(G_{a,b,m})\ge b$.

It remains to prove that $strc(G_{a,b,m})\le b$. Let $m$ be sufficiently large so that $(b-1)^{m-1}>(b-2)^m$. This inequality holds if $m>\frac{\log(b-1)}{\log(b-1)-\log(b-2)}$. Thus, we have $(b-1)^{m-1}\ge n$. We define a total-colouring $g$ of $G_{a,b,m}$ as follows. Let $g(u_1w_\ell)=\ell$ for $1\le \ell\le a-1$. Let $g(u_1)=a$, and $g(u_1v)=g(u_iu_j)=g(z)=b$ for all $v\in V$, $1\le i<j\le m$, and $z\in V(G_{a,b,m})\setminus\{u_1\}$. Now since $(b-1)^{m-1}\ge n$, we may assign distinct vectors of length $m-1$ to the vertices of $V$, with entries from $\{1,2,\dots,b-1\}$. Suppose that $v\in V$ has been assigned with the vector $\vec{v}$. We let $g(u_{i+1}v)=\vec{v}_i$ for $1\le i\le m-1$ and $v\in V$. We claim that $g$ is a strongly total rainbow connected colouring for $G_{a,b,m}$. Similar to before, it suffices to show that for all $x,y\in V(G_{a,b,m})\setminus\{u_1\}$, there is a total-rainbow $x-y$ geodesic. If $x,y\not\in W$ and $x,y$ are not adjacent, then $x,y\in V$. We have $\vec{x}_i\neq\vec{y}_i$ for some $1\le i\le m-1$, so that we can take the geodesic $xu_{i+1}y$. If $x\in W$, then we can take the geodesic $xu_1y$. Thus $g$ is a strongly total rainbow connected colouring for $G_{a,b,m}$, and $strc(G_{a,b,m})\le b$. We have $strc(G_{a,b,m})=b$.
\end{proof}

%\begin{lem}\label{rvcpreslem2}
%For every $4\le a<b$, there exist infinitely many connected graphs $G$ such that $rvc(G)=a$ and $srvc(G)=b$.
%\end{lem}
%
%\begin{proof}
%We take the graph $G_{a,b,m}$ as defined in the previous proof, and subdivided it by replacing every edge, except for the edges within $U$, with a path of length $2$. Let $H_{a,b,m}$ be the resulting graph. We show that for $4\le a<b$ and all sufficiently large $m$, we have $rvc(H_{a,b,m})=a$ and $srvc(H_{a,b,m})=b$.
%
%Clearly there are $a$ cut-vertices in $H_{a,b,m}$, so that $rvc(H_{a,b,m})\ge a$. Now we define a vertex-colouring $f'$ of $H_{a,b,m}$, by using the total-colouring $f$ in Lemma \ref{trcpreslem2}, as follows. We let $f'(z)=f(z)$ for $z\in V(G_{a,b,m})$, and $f'(z)=f(e)$ if $z$ is the vertex that subdivides the edge $e\in E(G_{a,b,m})$.
%\end{proof}

We can now prove Theorems \ref{rvcpresthm} and \ref{trcpresthm}.

\begin{proof}[Proof of Theorem \ref{rvcpresthm}]
Suppose that there exists a connected graph $G$ such that $rvc(G) = a$ and $srvc(G) = b$. Then obviously we have $a\le b$. If $a=1$ (resp.~$a=2$), then Theorem \ref{thm1}(c)(ii) (resp.~(c)(iii)) gives $b=1$ (resp.~$b=2$). Therefore, we have either $a = b \in \{1,2\}$, or $3 \le a \le b$.

Conversely, given $a,b$ such that either $a=b\in\{1,2\}$ or $3 \le a \le b$, we show that there exists a connected graph $G$ with $rvc(G) = a$ and $srvc(G) = b$. Obviously if $a=b\ge 1$, then $rvc(G)=srvc(G)=a$ if $G$ is the path of length $a+1$. The remaining cases satisfy $3\le a\le b$, and these are covered by Lemmas \ref{rvcpreslem1} and \ref{rvcpreslem2}. Thus Theorem \ref{rvcpresthm} follows.
\end{proof}

\begin{proof}[Proof of Theorem \ref{trcpresthm}]
Suppose that there exists a connected graph $G$ such that $trc(G) = a$ and $strc(G) = b$. Then obviously we have $a\le b$. If $a=1$ (resp.~$a=3$, $a=4$), then Theorem \ref{thm1}(a) (resp.~(c)(iv), (c)(v)) gives $b=1$ (resp.~$b=3$, $b=4$). Theorem \ref{thm1}(a) and (b) also imply that $a,b\neq 2$. Therefore, we have either $a = b \in \{1,3,4\}$, or $5 \le a \le b$.

Conversely, given $a,b$ such that either $a=b\in\{1,3,4\}$ or $5 \le a \le b$, we show that there is a connected graph $G$ with $trc(G) = a$ and $strc(G) = b$. Obviously, if $a=b=1$, then $trc(G)=strc(G)=1$ if $G$ is any non-trivial complete graph, and if $a=b\ge 3$, then $trc(G)=strc(G)=a$ if $G$ is the star of order $a$. The remaining cases satisfy $5\le a<b$, and these are covered by Lemma \ref{trcpreslem}. Thus Theorem \ref{trcpresthm} follows.
\end{proof}

%\section{Concluding remark}
%
%As mentioned earlier, $strc(G)\geq \max\{srvc(G), src(G)\}$ holds trivially for a nontrivial connected graph $G$. Moreover, if $G$ is complete, then $strc(G)=1=src(G)=\max\{srvc(G), src(G)\}$, that is, the bound is sharp. On the other hand, since $strc(W_n)=\lceil\frac{n}{3}\rceil+1$ for $n\geq 4$, while $srvc(W_n)=1$ (see \cite{LMS}), and $src(W_n)=\lceil\frac{n}{3}\rceil$ (see \cite{CJMZ}). Thus, there exists a graph $G$ of order $n$ such that $strc(G)> srvc(G)$ and $strc(G)> src(G)$ for any $n\geq 4$. In conclusion, for $srvc(G)$, we conjecture that $strc(G)> srvc(G)$. For $src(G)$, we at present cannot be sure how tight the inequality $strc(G)\geq src(G)$ is. Does there exist a graph $G$ such that $strc(G)=src(G)=s$ for sufficiently large integer $s$? Or, can we possibly expect that $strc(G)> src(G)$ for any non-complete graph $G$? These problems will be the objects of our future study.

\section*{Acknowledgements}
Lin Chen, Xueliang Li and Jinfeng Liu are supported by the National Science Foundation of China (Nos.~11371205 and 11531011). Henry Liu is supported by China Postdoctoral Science Foundation (Nos.~2015M580695, 2016T90756), and International Interchange Plan of CSU. Henry Liu would also like to thank the Chern Institute of Mathematics, Nankai University, for their generous hospitality. He was able to carry out part of this research during his visit there.


\begin{thebibliography}{99}

\bibitem{JJ2015} J.~Alva-Samos, J.J.~Montellano-Ballesteros, Rainbow connection in some digraphs, \emph{Graphs Combin.}~32 (2016), 2199--2209.

\bibitem{JJJ2015} J.~Alva-Samos, J.J.~Montellano-Ballesteros, A note on the rainbow connectivity of tournaments, arXiv:1504.07140 (2015).

\bibitem{AJ2016} J.~Alva-Samos, J.J.~Montellano-Ballesteros, Rainbow connection of cacti and some infinity digraphs, submitted.

\bibitem{B98} B.~Bollob\'as, Modern Graph Theory, Springer-Verlag, New York, 1998, xiv+394 pp.

%\bibitem{graphBondy2008}
%J.A. Bondy, U.S.R. Murty, {\it Graph Theory}, Springer, Berlin, 2008.

\bibitem{CLRTY}
Y.~Caro, A.~Lev, Y.~Roditty, Zs.~Tuza, R.~Yuster, On rainbow connection, \emph{Electron.~J. Combin.}~15 (2008), R57.

\bibitem{CFMY}
S.~Chakraborty, E.~Fischer, A.~Matsliah, R.~Yuster, Hardness and algorithms for rainbow connectivity, 26th International Symposium on Theoretical Aspects of Computer Science STACS 2009 (2009), 243--254. See also \emph{J.~Combin.~Optim.}~21 (2011), 330--347.

\bibitem{CJMZ}
G.~Chartrand, G.L.~Johns, K.A.~McKeon, P.~Zhang, Rainbow connection in graphs, \emph{Math.~Bohem.}~133 (2008), 85--98.

\bibitem{CLL}
L.~Chen, X.~Li, H.~Lian, Further hardness results on the rainbow vertex connection number of graphs, \emph{Theoret.~Comput.~Sci.}~481 (2013), 18--23.

\bibitem{CLS}
L.~Chen, X.~Li, Y.~Shi, The complexity of determining the rainbow vertex-connection of a graph, \emph{Theoret.~Comput.~Sci.}~412 (2011), 4531--4535.

\bibitem{CL}
X.~Chern, X.~Li, A solution to a conjecture on the rainbow connection number, \emph{Ars Combin.}~104 (2012), 193--196.

\bibitem{PIE2014} P.~Dorbec, I.~Schiermeyer, E.~Sidorowicz, E.~Sopena, Rainbow connection in oriented graphs, \emph{Discrete Appl. Math.}~179 (2014), 69--78.

\bibitem{HMS2014} R.~Holliday, C.~Magnant, P.~Salehi Nowbandegani, Note on rainbow connection in
oriented graphs with diameter 2, \emph{Theory Appl.~Graphs} 1(1) (2014), article 2.

\bibitem{HLS}
X.~Huang, X.~Li, Y.~Shi, Note on the hardness of rainbow connections for planar and line graphs, \emph{Bull.~Malays.~Math.~Sci.~Soc.}~38 (2015), 1235--1241.

\bibitem{JLZ}
H.~Jiang, X.~Li, Y.~Zhang, Upper bounds for the total rainbow connection of graphs, \emph{J.~Combin.~Optim.}~32 (2016), 260--266.

\bibitem{KY}
M.~Krivelevich, R.~Yuster, The rainbow connection of a graph is (at most) reciprocal to its minimum degree, \emph{J. Graph Theory} 63 (2010), 185--191.

\bibitem{LLLS} H.~Lei, S.~Li, H.~Liu, Y.~Shi, Rainbow vertex connection of digraphs, submitted.

\bibitem{LLMS} H.~Lei, H.~Liu, C.~Magnant, Y.~Shi, Total rainbow connection of digraphs, submitted.

\bibitem{LLL}
H.~Li, X.~Li, S.~Liu, The (strong) rainbow connection numbers of Cayley graphs on Abelian groups, \emph{Comput.~Math.~Appl.}~62 (2011), 4082--4088.

\bibitem{LLS2015}
S.~Li, X.~Li, Y.~Shi, Note on the complexity of deciding the rainbow (vertex-) connectedness for bipartite graphs, \emph{Appl.~Math.~Comput.}~258 (2015), 155--161.

\bibitem{LL2011}
X.~Li, S.~Liu, Rainbow vertex-connection number of $2$-connected graphs, arXiv:1110.5770v1 (2011).

\bibitem{LMS}
X.~Li, Y.~Mao, Y.~Shi, The strong rainbow vertex-connection of graphs, \emph{Util.~Math.}~93 (2014), 213--223.

\bibitem{LS}
X.~Li, Y.~Shi, On the rainbow vertex-connection, \emph{Discuss. Math. Graph Theory} 33 (2013), 307--313.

\bibitem{LSS}
X.~Li, Y.~Shi, Y.~Sun, Rainbow connections of graphs: A survey, \emph{Graphs Combin.}~29 (2013), 1--38.

\bibitem{LS1}
X.~Li, Y.~Sun, Rainbow Connections of Graphs, SpringerBriefs in Math., Springer, New York, 2012, viii+103 pp.

\bibitem{LS2}
X.~Li, Y.~Sun, On the strong rainbow connection of a graph, \emph{Bull.~Malays.~Math.~Sci.~Soc.} 36 (2013), 299--311.

\bibitem{LiuMSrvc}
H.~Liu, \^{A}.~Mestre, T.~Sousa, Rainbow vertex $k$-connection in graphs, \emph{Discrete Appl. Math.}~161 (2013), 2549--2555.

\bibitem{LiuMS}
H.~Liu, \^{A}.~Mestre, T.~Sousa, Total rainbow $k$-connection in graphs, \emph{Discrete Appl.~Math.} 174 (2014), 92--101.

\bibitem{Ma}
Y.~Ma, Total rainbow connection number and complementary graph, \emph{Results Math.}~70 (2016), 173--182.

\end{thebibliography}
\end{document}